\newcommand{\R}{\mathbb R}
\newcommand{\Z}{\mathbb Z}
\newcommand{\neb}{\mathcal N}
\newcommand{\diam}{\mathrm{diam}}
\newcommand{\aut}{\mathrm{Aut}}
\newcommand{\stab}{\mathrm{Stab}}
\newcommand{\mcg}{\mathrm{MCG}}
\newcommand{\llangle}{\left\langle\!\left\langle}
\newcommand{\rrangle}{\right\rangle\!\right\rangle}
\newcommand*{\free}{\raisebox{-0.6ex}{\scalebox{2.5}{$\ast$}}}
\newcommand{\bY}{\ensuremath{\mathbf{Y}}}
\newcommand{\bT}{\ensuremath{\mathbf{T}}}
\newcommand{\tsh}[1]{\left\{\kern-.7ex\left\{#1\right\}\kern-.7ex\right\}}
\newcommand{\Tsh}[2]{\tsh{#2}_{#1}}
\newcommand{\ignore}[2]{\Tsh{#2}{#1}}
\newcommand{\X}{\mathcal X}
\newcommand{\C}{\mathcal C}
\newcommand{\T}{\mathcal T}
\newcommand{\A}{\mathcal A}
\newcommand{\M}{\mathcal M}
\newcommand{\G}{\mathcal G}
\newcommand{\PP}{\mathcal P}
\newcommand{\s}{\mathfrak S}
\newcommand{\g}{\mathfrak g}
\newcommand{\pr}{\mathbf P}
\newcommand{\F}{\mathbf F}
\newcommand{\E}{\mathbf E}
\newcommand{\rel}{\mathrm{Rel}}
\newcommand{\cont}{\mathrm{cont^{\perp}}}
\newcommand{\nest}{\sqsubseteq}
\newcommand{\propnest}{\sqsubsetneq}
\newcommand{\orth}{\perp}
\newcommand{\notorth}{\not\perp}
\newcommand{\trans}{\pitchfork}
\newtheorem{thm}{Theorem}[section]
\newtheorem{lem}[thm]{Lemma} 
\newtheorem{cor}[thm]{Corollary}
\newtheorem{prop}[thm]{Proposition}
\newtheorem{que}[thm]{Question}
\newtheorem*{thmVeechQT}{Theorem 7.8$'$}
\theoremstyle{definition}
\newtheorem{defn}[thm]{Definition}
\newtheorem*{notation}{Notation}
\theoremstyle{remark}
\newtheorem{rem}[thm]{Remark}
\newtheorem{rem*}{Remark}
\begin{document}

\title{Property QT of relatively hierarchically hyperbolic groups}
\author{Bingxue Tao}
\address{Department of Mathematics, Kyoto University, Kyoto 606-8502, Japan}
\email{tao.bingxue.s66@kyoto-u.jp}

\begin{abstract}
    Using the projection complex machinery, Bestvina--Bromberg--Fujiwara, Hagen--Petyt, and Han--Nguyen--Yang have proved that several classes of nonpositively curved groups admit equivariant quasi-isometric embeddings into finite products of quasi-trees, i.e. having property QT. In this paper, we unify and generalize these results by establishing a sufficient condition for relatively hierarchically hyperbolic groups to have property QT. 
    
    As applications, we show that a group has property QT if it is residually finite and belongs to one of the following classes of groups: admissible groups, hyperbolic-$2$-decomposable groups with no distorted elements, Artin groups of large and hyperbolic type, and $\pi_1$-extension groups of lattice Veech groups. We also introduce a slightly stronger version of property QT, called property QT$_0$, and show the invariance of property QT$_0$ under graph products. 
\end{abstract}

\subjclass{20F65, 20F67}
\keywords{Hierarchically hyperbolic, quasi-tree, projection complex, residually finite.}

\maketitle

\section{Introduction}

Group actions on quasi-trees have been studied intensively in recent years. A \emph{quasi-tree} is a geodesic space quasi-isometric to a simplicial tree. We say that a finitely generated group $G$ has \emph{property QT} if $G$ acts on a finite product of quasi-trees (equipped with the $\ell^1$-metric) such that the orbit map is a quasi-isometric embedding. 
Such an embedding is called a \emph{QT embedding} of $G$. Since a quasi-tree has asymptotic dimension at most $1$, property QT is a stronger form of finite asymptotic dimension. Examples of groups with property QT include
\begin{itemize}
    \item Coxeter groups \cite{DJ99};
    \item residually finite hyperbolic groups \cite{BBF21};
    \item mapping class groups of finite-type surfaces \cite{BBF21};
    \item virtually colorable hierarchically hyperbolic groups whose associated hyperbolic spaces are all quasi-trees \cite{HP22} (including virtually compact special groups \cite{BHS17b}, the genus $2$ handlebody group \cite{Che22}, fundamental groups of non-geometric graph manifolds \cite{HRSS25});
    \item fundamental groups of compact orientable $3$-manifolds whose sphere-disk decomposition does not support either Sol or Nil geometry \cite{HNY25};
\end{itemize}
along with their undistorted subgroups. 

The last four examples were proved to have property QT with the help of the projection complex techniques developed in \cite{BBF15,BBFS19}. In particular, property QT of mapping class groups strengthens \cite[Theorem C]{BBF15}, which says that mapping class groups equivariantly quasi-isometrically embed in a finite product of hyperbolic graphs of finite asymptotic dimension. Counterexamples of property QT include certain special linear groups \cite{Man06,Man08}, generalized Baumslag--Solitar groups with infinite monodromy \cite{But25} and groups with Property hereditary
(NL) \cite{BFG24}. For some basic corollaries of property QT, see \cite[\S 2.1\&2.2]{HNY25}. Recently, Vergara \cite{Ver24} proved that any finitely generated group with property QT has a proper uniformly Lipschitz affine action on $\ell^1$ with quasi-isometrically embedded orbits. 

As a generalization of the Masur–Minsky machinery \cite{MM99,MM00}, \emph{hierarchically hyperbolic groups} \cite{BHS17b,BHS19}, abbreviated as HHGs, have become an important bridge between mapping class groups, cubical groups, and many other nonpositively-curved groups. A list of papers in this field can be found in \cite{HRSS25}. Coarsely speaking, an HHG is a finitely generated group $G$ whose geometry can be recovered from $G$-equivariant projections to a specified (possibly infinite) collection of hyperbolic spaces. 
For background on HHGs and relative HHGs, see Section \ref{def2}. As shown in \cite{BHS17a}, HHGs have finite asymptotic dimension. This leads to a natural question:

\begin{que}
    Which HHGs have property QT?
\end{que}

In this paper, we provide a sufficient condition for relative HHGs to have property QT, which reproves those of \cite{BBF21}, \cite{HP22}, and \cite{HNY25} mentioned above. We also give a sufficient condition for the existence of a quasi-median QT embedding in the sense of \cite{HP22}. This stronger property can be used to prove the existence of globally stable cylinders (see \cite{PSZ25}), which connects to a long-standing question of Rips and Sela \cite{RS95} about canonical representatives of elements in hyperbolic groups. 
The following is a collection of applications from Section \ref{sec_apps}. These results are new except for mapping class groups. 

\begin{thm}\label{thm_apps}
    The following groups have property QT. 
    \begin{itemize}
        \item Mapping class groups of finite-type surfaces;
        \item Residually finite admissible graphs of groups;
        \item Residually finite hyperbolic-$2$-decomposable groups with no distorted elements;
        \item Residually finite Artin groups of large and hyperbolic type.
    \end{itemize}

    Moreover, the QT embeddings for all these groups are quasi-median. 
\end{thm}

\begin{rem*}
    In this updated arXiv version, we also show that every $\pi_1(\Sigma)$-extension group of a lattice Veech group in the mapping class group $\mcg(\Sigma)$ of a closed surface $\Sigma$ has property QT by answering Question \ref{que: VeechRF} from the published version (see Remark \ref{rem: VeechRF} and Theorem \hyperref[thm: VeechQT]{7.8$'$}). 
\end{rem*}

When studying the invariance of property QT in some cases, we want the group action on the product space to be diagonal. We say that a finitely generated group $G$ has \emph{property QT$_0$} if $G$ has property QT and the $G$-action on the finite product of quasi-trees is diagonal. By \cite[Theorem 1.5]{HNY25}, if a residually finite group $G$ is hyperbolic relative to a collection of groups with property QT$_0$, then $G$ has property QT. 
Without ambiguity, we also say that a $G$-action on a metric space $X$ has \emph{property QT$_0$} if $X$ admits a $G$-equivariant quasi-isometric embedding into a finite product of quasi-trees on which $G$ acts diagonally. In particular, if $X$ itself is a finite product of quasi-trees, then any diagonal action on $X$ has property QT$_0$. We prove the following invariance of property QT$_0$ under graph products. 

\begin{thm}\label{Introthm: QT_GraphProd}[Theorem \ref{thm_graphproduct}]
    Any graph product of groups whose every vertex group has property QT$_0$ still has property QT$_0$.
\end{thm}

Now we give the main definitions of this paper in order to state our main theorem.

\begin{defn}
\label{type}
    Let $(G,\s)$ be a relative HHG. For any $U\in \s$, We write $G_U<\aut(\s_U)$ to mean the image of $\stab_G(U)$ under the restriction homomorphism. 
    \begin{enumerate}
    \item We say a domain $U\in \s$ is of \emph{type I} if it has the following properties. 
    \begin{enumerate}
     \item (Hyperbolicity) $\C U$ is hyperbolic.
     \item (Acylindrical image) $G_U$ acts on $\C U$ acylindrically.
     \item (Cobounded nested region) $G_U$ acts on $\F_U$ coboundedly.
     \item (Separable quasi-axes) For any element $g\in\stab_G(U)$ that acts loxodromically on $\C U$, the elementary closure $EC(g)$ is separable in $G$. 
    \end{enumerate}

    \item We say a domain $U\in \s$ is of \emph{type II} if the action $G_U\curvearrowright \C U$ has property QT$_0$. 
    \end{enumerate}
\end{defn}

For any $U\in \s$ of type II, property QT$_0$ provides quasi-trees $T_U^i$ along with $G_U$-equivariant maps $\iota^i_U:\C U\to T_U^i$ for $i=1,\dots,n_U$ such that 
\[\prod_{i=1}^{n_U}\iota_U^i:\C U\to \prod_{i=1}^{n_U}T_U^i\]
is a quasi-isometric embedding. 
Our main theorem is as follows. 

\begin{thm}\label{maintheorem}
    Let $(G,\s)$ be a relative HHG that is virtually colorable. If every $U\in \s$ is of type I or type II, then $G$ has property QT. 

    Moreover, if for any $D\ge 1$, there exists $D'\ge 1$ such that for every $U\in \s$ of type II and each $i=1,\dots,n_U$, the map $\iota^i_U:\C U\to T_U^i$ sends $(D,D)$-quasi-geodesics to unparametrized $(D',D')$ quasi-geodesics, then $G$ is coarse median and the QT embedding of $G$ is quasi-median. 
\end{thm}

\begin{proof}[Sketch of proof]
    We roughly explain how to prove Theorem \ref{maintheorem} in the case that $(G,\s)$ is an HHG with only type I domains excluding the ``moreover'' part. This case contains most of the key ideas. 
    
    First, we introduce a class of ``thick'' distances on $G$ each of which is defined using a class of ``thick'' segments of hierarchy paths on $G$. We prove in Section \ref{sec_thickdistanceformula} a thick distance formula saying that the word metric of $G$ can be recovered by summing up these thick distances. This is an analogue of the distance formula for HHGs.

    Then we show that any class of thick segments is cofinite up to the group action. Furthermore, these thick segments can be extended to a cofinite collection of quasi-axes. Using projections to these quasi-axes, we can estimate the thick distance. This is done in Section \ref{sec_appoximation}

    Finally, we take a finite-index subgroup of $G$, say $H$, such that a collection of quasi-axes as above is divided into finitely many $H$-orbits. Each $H$-orbit satisfies the Bestvina--Bromberg--Fujiwara projection axioms so it gives us a quasi-tree with an $H$-action. We prove that $H$ equivariantly quasi-isometrically embeds in the product of these finitely many quasi-trees in Section \ref{sec_quasitrees}. Since property QT is commensurably invariant, $G$ has property QT.
\end{proof}

    As stated in the above proof, property QT is commensurably invariant \cite[\S 2.2]{BBF21}. It follows that the conclusion of Theorem \ref{maintheorem} also holds for any group that is virtually a relative HHG that satisfies our condition, even though such a group may not be a relative HHG itself \cite{PS23}. 

    For most examples of HHGs that emerged from the study, every domain satisfies the first three conditions of type I. Virtual colorability is also common in practice. Therefore, the biggest restriction of our theorem comes from the assumption of separable quasi-axes. We further discuss it in Section \ref{neatker} and show how residual finiteness helps to give an easy-to-use criterion for having separable quasi-axes.

\subsection*{Acknowledgements}
    The author is grateful to his PhD supervisor, Koji Fujiwara, for many helpful discussions on this paper. He thanks Wenyuan Yang for discussing with him the paper \cite{HNY25} that became the motivation for this work. He thanks Shengkui Ye and the anonymous referee for their helpful comments. He also thanks Zihao Liu for pointing out that Question \ref{que: VeechRF} has a simple answer. This work was supported by JST SPRING, Grant Number JPMJSP2110.

\section{Background}\label{background}
\label{sec_background}

\subsection{Quasi-isometric embeddings and acylindricity}
\label{back1}

Given constants $\lambda\ge 1$, $c\ge 0$, we say that a coarse map $f:X\to Y$ between metric spaces $(X,d_X)$ and $(Y,d_Y)$ is a $(\lambda,c)$-\emph{quasi-isometric embedding} if \[\frac{1}{\lambda}d_X(x_1,x_2)-c\le d_Y(f(x_1),f(x_2))\le\lambda d_X(x_1,x_2)+c\]
for all $x_1,x_2\in X$. A $(\lambda,c)$-quasi-isometric embedding $\gamma: [0,l] \to X$ is called a (parametrized) $(\lambda,c)$-\emph{quasi-geodesic} in $X$. A coarse map $\gamma:[0,l]\to X$ is an \emph{unparametrized} $(\lambda,c)$-quasi-geodesic if there is a strictly increasing function $f:[0,l]\to[0,l]$ with $f(0)=0$, $f(l)=l$ such that $\gamma \circ f$ is a $(\lambda,c)$-quasi-geodesic. We also use the term ``quasi-geodesic'' to mean a quasi-isometric embedding of $\R$. We will not distinguish between a quasi-geodesic and its image in $X$. 

A geodesic metric space is called $\delta$-\emph{hyperbolic} (or simply, \emph{hyperbolic}) for $\delta\ge 0$ if for any geodesics $\alpha,\beta,\gamma$ that form a triangle, $\alpha$ is contained in the $\delta$-neighborhood of $\beta\cup\gamma$ \cite{Gro87}. For a $\delta$-hyperbolic space $X$, an isometry $g:X\to X$ is called \emph{loxodromic} if the $g$-orbit $n\mapsto g^nx$ is a quasi-geodesic for some (equivalently, for any) $x\in X$. 

Let $X$ be a hyperbolic space and $G$ be a group acting by isometries on $X$ with a loxodromic element $g$. Given constants $\lambda\ge 1$, $c\ge 0$, a $(\lambda,c)$-quasi-geodesic $\gamma\subset X$ is called a $(\lambda,c)$-\emph{quasi-axis} for $g$ if $\gamma$ is $g$-invariant. The \emph{elementary closure} of $g$ in $G$, $EC_G(g)$, is the subgroup of $G$ that stabilizes $\gamma$ up to bounded Hausdorff distance. If there is no ambiguity in $G$, we often simplify the notation as $EC(g)$. Equivalently, it is the stabilizer of the set $\gamma(\pm\infty)$, the points at infinity of $\gamma$. Thus, the elementary closure does not depend on the choice of $\gamma$. Everything that commutes with $g$ is contained in $EC(g)$ (including powers and roots), but there may be other elements. 

A group action $G\curvearrowright X$ by isometries is called \emph{acylindrical} \cite{Bow08} if for any $r \ge 0$, there exist constants $R, N \ge 0$ such that for any pair $a, b \in X$ with $d(a, b) \ge R$, we have
\[
\# \bigl \{  g \in G \,|\, d(ga, a) \le r \,\, \textup{and} \,\, d(gb, b) \le r \bigr \} \le N.
\]

Let $X$ be a hyperbolic space and $G$ be a group acting acylindrically on $X$ with a loxodromic element $g$. Some basic properties of this kind of actions can be found in \cite{Osi16}. In this case, the elementary closure $EC(g)$ is the unique maximal virtually cyclic subgroup of $G$ that contains $g$ \cite[Lemma 6.5]{DGO17}. Moreover, $EC(g)$ has a subgroup of index at most $2$ that is a centralizer of a large power of $g$ in $G$ \cite[Corollary 6.6]{DGO17}. 

In this paper, we will consider group actions with a large kernel, in which case the action cannot be acylindrical. As in \cite{BBF21}, an action $G\curvearrowright X$ is said to have \emph{acylindrical image} if the image of $G$ in the isometry group of $X$ is acylindrical.

\subsection{Projection axioms}\label{projcpx}

In this section, we review the construction of a quasi-tree of spaces in \cite{BBF15} with improvements from \cite{BBFS19}. 

Let $\bY$ be a collection of geodesic metric spaces, and $\pi_Y(X)\subset Y$ be specified subsets whenever $X\neq Y$ are elements of $\bY$. Write $d^\pi_Y(X,Z)$ to mean $\diam(\pi_Y(X)\cup\pi_Y(Z))$ for $X\neq Y\neq Z$. We say that $(\bY, \{\pi_Y\})$ is a \emph{projection system} with \emph{projection constant} $\xi\ge 0$ if it satisfies the following \emph{projection axioms}. 
  \begin{enumerate}
  \item [(P0)] (Bounded projection) $\diam (\pi_Y(X))\leq\xi$ when $X\neq Y$. 
    \item [(P1)] (Behrstock inequality) if $X,Y,Z$ are distinct and $d^\pi_Y(X,Z)>\xi$, then
      $d^\pi_X(Y,Z)\leq\xi$. 
    \item [(P2)] (Finiteness) for $X\neq Z$ the set
      $$\{Y\in\bY\mid d^\pi_Y(X,Z)>\xi\}$$
      is finite. 
  \end{enumerate}
  
We say that $(\bY, \{\pi_Y\})$ is a $G$-\emph{projection system} if a group $G$ acts on the set $\bY$ in such a way that every $g\in G$ acts as an isometry from $Y$ to $gY$ and the projections $\pi_Y$ are $G$-equivariant, that is, $\pi_{gY}(gX)=g\pi_Y(X)$. 

If we replace (P1) with
\begin{enumerate}
\item [(P1)$'$] if $X,Y,Z$ are distinct and $d^{\pi}_Y(X,Z)>\xi$, then
      $\pi_X(Y)=\pi_X(Z)$,
\end{enumerate}
then we say that $(\bY, \{\pi_Y\})$ satisfies the {\em strong projection axioms}. While there are many natural situations where the projection axioms hold, the strong projection axioms are not as natural. However, we can modify the projections so that they do hold. 

\begin{thm}\cite[Theorem 4.1]{BBFS19}\label{strong axioms}
If $(\bY, \{\pi_Y\})$ is a projection system with constant $\xi$, then there are projections $\{\pi'_Y\}$ such that $(\bY, \{\pi'_Y\})$ satisfies the strong projections axioms with constant $\xi'$, where $\pi'_Y(X)$ and $\pi_Y(X)$ are apart from each other within a uniform Hausdorff distance $\epsilon$, and $\epsilon$ and $\xi'$ only depend on $\xi$. Moreover, if $(\bY, \{\pi_Y\})$ is a $G$-projection system, then $(\bY, \{\pi'_Y\})$ is still a $G$-projection system. 
\end{thm}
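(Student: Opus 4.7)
The plan is to enlarge each projection just enough that coarsely agreeing projections become literally equal, upgrading (P1) to (P1)$'$. The Behrstock inequality already forces $\pi_X(Y)$ and $\pi_X(Z)$ to lie within $\xi$ of each other whenever $\dist^\pi_Y(X,Z)>\xi$, so the natural move is to absorb every such ``witness'' into the projection. Concretely, I would fix a threshold $\theta$ moderately larger than $\xi$ and, for each pair $X \neq Y$, define
$$\pi'_Y(X) := \pi_Y(X) \;\cup\; \bigcup \bigl\{\, \pi_Y(Z) : Z \neq X, Y,\ \dist^\pi_Y(X, Z) \le \theta \,\bigr\}.$$
This collects every projection of a domain that $Y$ cannot yet reliably distinguish from $X$. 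A small iteration or symmetrization of the defining predicate may be needed to guarantee that the relation ``$X$ and $Z$ look the same from $Y$'' transfers back to $\pi'_X(Y) = \pi'_X(Z)$, and choosing that symmetrization correctly is the technical core of the argument.

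Most axioms fall out quickly. For bounded projection (P0), every $\pi_Y(Z)$ added to $\pi'_Y(X)$ lies within $\theta + 2\xi$ of $\pi_Y(X)$ by the triangle inequality on $\diam$, so $\diam(\pi'_Y(X))$ is bounded in terms of $\xi$ and $\theta$ alone; the Hausdorff distance $\epsilon$ between $\pi'_Y(X)$ and $\pi_Y(X)$ obeys the same bound. For finiteness (P2), $\dist^{\pi'}_Y(X,Z)$ and $\dist^\pi_Y(X,Z)$ differ by at most $2\epsilon$, so $\{Y : \dist^{\pi'}_Y(X,Z) > \xi'\} \subseteq \{Y : \dist^\pi_Y(X,Z) > \xi' - 2\epsilon\}$, which is finite by (P2) for the original system. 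Equivariance is automatic: the defining predicate uses only the $G$--equivariant data $\pi$ and $\dist^\pi$, so $\pi'_{gY}(gX) = g\pi'_Y(X)$ holds formally.

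The main obstacle is the strong Behrstock inequality (P1)$'$: given distinct $X, Y, Z$ with $\dist^{\pi'}_Y(X,Z) > \xi'$, I must establish the set-theoretic equality $\pi'_X(Y) = \pi'_X(Z)$. For $\xi'$ large enough relative to $\xi$ and $\theta$, the hypothesis forces $\dist^\pi_Y(X,Z) > \theta$, and (P1) then gives $\dist^\pi_X(Y,Z) \le \xi < \theta$. Hence $Z$ lies in the ``indistinguishable neighborhood'' of $Y$ at $X$ and vice versa, so $\pi_X(Z) \subseteq \pi'_X(Y)$ and $\pi_X(Y) \subseteq \pi'_X(Z)$. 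The genuine difficulty is comparing the remaining witnesses: for $W \notin \{X, Y, Z\}$, the conditions $\dist^\pi_X(Y,W) \le \theta$ and $\dist^\pi_X(Z,W) \le \theta$ only agree up to a $\xi$--slop, producing potentially distinct sets. Bridging this gap is precisely where a careful closure operation on witness sets, or a two-step iteration with a larger second threshold, becomes necessary; I expect that tracking constants through such an iteration will give $\xi'$ and $\epsilon$ as explicit functions of $\xi$, with termination ensured by (P2).
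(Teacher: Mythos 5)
The core idea — enlarging each $\pi_Y(X)$ by absorbing projections of nearby domains — is a reasonable first instinct, and your verification of (P0), (P2), and $G$--equivariance for the one-step enlargement is correct. But the crux of the theorem is precisely the step you leave unresolved, and the fix you gesture toward does not obviously work.

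The problem with the proposed closure/iteration is twofold. First, a finite iteration never produces an exact equality: after $k$ steps the witness sets still only agree up to an additive $O(\xi)$ slop, so (P1)$'$ continues to fail. Second, passing to the genuine transitive closure of the relation ``$\dist^\pi_Y(X,Z)\le\theta$'' (which \emph{would} be an equivalence relation, and hence would give (P1)$'$ by your argument) threatens to destroy (P0): a chain $W_0, W_1, \ldots, W_n$ with each $\dist^\pi_Y(W_i,W_{i+1})\le\theta$ could have $\dist^\pi_Y(W_0,W_n)$ arbitrarily large, and the finiteness axiom (P2) does not rule this out. (P2) bounds the number of domains that separate a \emph{fixed} pair, not the length of a walk in the space of projections as seen from a single $Y$. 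You invoke (P2) for termination, but that is not what (P2) provides. So the proposal, as written, has a genuine gap exactly where the theorem is hard.

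For comparison, the proof of Theorem 4.1 in Bestvina--Bromberg--Fujiwara--Sisto is not a threshold-closure argument. They exploit the (coarse) linear order on the set of domains separating a pair, together with the finiteness axiom, to extract a canonical choice of which projections to merge; the resulting relation is exact by construction rather than by attempting to force transitivity of a near-relation. The order structure is what replaces your ``careful closure operation,'' and it is the piece of the argument your proposal is missing. If you want to complete the proof, the thing to understand is how the Behrstock inequality yields that order and why it makes the merged projections both canonical (giving (P1)$'$) and uniformly bounded (preserving (P0)).
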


Let
$\C_K\bY$ denote the space obtained from the disjoint union
$$\bigsqcup_{Y\in\bY} Y$$
by joining points in $\pi_X(Z)$ with points in $\pi_Z(X)$ by an edge of length one whenever $d_Y(X,Z) <K$  for all $Y\in\bY-\{X,Z\}$. When the spaces are
graphs and projections are subgraphs, we can join just the vertices in
these projections. Moreover, if $\bY$ is a $G$-projection system, then $G$ acts isometrically on $\C_K\bY$. 

\begin{thm}\cite[\S 4]{BBF15}\label{bbf}
If $(\bY, \{\pi_Y\})$ satisfies the strong projection axioms with constant $\xi$, then for all $K>2\xi$ 
\begin{itemize}
 \item $\C_K\bY$ is hyperbolic if all $Y\in\bY$ are $\delta$-hyperbolic;
 \item $\C_K\bY$ is a quasi-tree if all $Y\in\bY$ are quasi-trees with uniform QI constants.
\end{itemize}
\end{thm}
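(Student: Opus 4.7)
The plan is to analyze geodesics in $\C_K\bY$ by constructing preferred paths from projection data, then transfer hyperbolicity or the quasi-tree property from the individual $Y$'s to $\C_K\bY$. The core combinatorial input is that, given two points $x\in X$ and $z\in Z$ with $X\neq Z$, the set
\[
\Y(X,Z) \;=\; \{Y\in\bY \mid \dist^{\pi}_Y(X,Z) > K\}
\]
is finite by (P2), and the strong axiom (P1)$'$ yields a natural total order on $\Y(X,Z)$: declare $Y_1 < Y_2$ if $\dist^{\pi}_{Y_2}(X,Y_1)>\xi'$. I would first verify that this is a linear order (antisymmetry and transitivity follow by running (P1)$'$ through triples) and that $X,Z$ are the minimum and maximum if we append them. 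This is the step that packages the projection axioms into a usable combinatorial skeleton.

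Next I would define \emph{standard paths} from $x$ to $z$ as concatenations that, for each $Y\in\Y(X,Z)$ listed in the linear order $Y_0=X < Y_1 < \dots < Y_n < Y_{n+1}=Z$, travel inside $Y_i$ from $\pi_{Y_i}(Y_{i-1})$ to $\pi_{Y_i}(Y_{i+1})$ along a geodesic of $Y_i$ and then cross into $Y_{i+1}$ via the edges of length one added in the definition of $\C_K\bY$. The central lemma is that standard paths are unparametrized $(\lambda,c)$--quasi-geodesics in $\C_K\bY$ with constants depending only on $\xi$ and $K$. The proof uses the linear order together with the Behrstock inequality to show that if a path were to leave and later return to some $Y_i$, the projections of the re-entry point would have to agree with those of the exit point up to the projection constant, which forces an inefficient detour; this is the ``projection cannot oscillate'' phenomenon central to \cite{BBF15,BBFS19}.

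Once standard paths are identified as quasi-geodesics, hyperbolicity of $\C_K\bY$ when each $Y$ is $\delta$--hyperbolic is proved by checking the thin-triangle condition on standard triangles. Given three standard paths forming a triangle with vertices in $X_1,X_2,X_3$, one shows that the three linear orders on $\Y(X_i,X_j)$ are compatible on common domains, so each $Y$ that appears is crossed by at most two of the three sides and the crossing segments live in a single $\delta$--hyperbolic space $Y$; thin-ness of the triangle in $\C_K\bY$ then reduces to thin-ness in $Y$, with an additive error absorbed into the constants. For the quasi-tree statement, I would invoke Manning's bottleneck criterion: one must show that for any two points $p,q$ there exists $m$ on the standard path from $p$ to $q$ such that any other path between $p$ and $q$ passes within uniformly bounded distance of $m$. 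Take $m$ to be a midpoint of the standard path lying inside some domain $Y$; the linear-order machinery forces any competing path to also cross $Y$ with projections close to those of the standard path, and the hypothesis that $Y$ is a quasi-tree with uniform QI constants yields the bottleneck property inside $Y$, which lifts to $\C_K\bY$.

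The main obstacle, and the step I expect to carry most of the technical weight, is the quasi-geodesic lemma for standard paths: proving that no shortcut can defeat the crossing pattern dictated by the linear order on $\Y(X,Z)$. Everything else (the linear order itself, the hyperbolicity reduction, and the application of Manning's criterion) follows reasonably directly once standard paths are known to be quasi-geodesics with uniform constants depending only on $\xi$ and $K>2\xi$.
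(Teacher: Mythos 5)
This theorem is cited from \cite[\S 4]{BBF15} (with the strong-axiom refinement from \cite{BBFS19}); the present paper offers no proof of its own, so there is no internal argument to compare against. Judged against the source, your outline reconstructs the BBF–BBFS strategy faithfully: the total order on the domains with large projection between $X$ and $Z$, the standard paths threading those domains in order, the key lemma that standard paths are uniform unparametrized quasi-geodesics, and the deduction of hyperbolicity (resp.\ the quasi-tree property via Manning's bottleneck). These are indeed the load-bearing pieces of the actual proof, and you correctly identify the quasi-geodesic lemma as the place where almost all of the technical work lives.

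Two small cautions. First, in the hyperbolicity step, the claim that each domain $Y$ is crossed by at most two of the three sides of a standard triangle is not literally true: all three pairwise projection distances $\dist^\pi_Y(X_i,X_j)$ can exceed the threshold, so all three sides can traverse $Y$. The correct statement, and what the argument really needs, is that the three entry/exit points in $Y$ are coarsely $\pi_Y(X_1),\pi_Y(X_2),\pi_Y(X_3)$, so the three crossing segments form a $\delta$-thin triangle inside $Y$ itself; the ``at most two sides'' phrasing should be replaced by that thin-triangle observation. Second, as you note, the quasi-geodesic lemma (no competing path can shortcut the prescribed crossing pattern) is the step that consumes the projection axioms, the choice $K>2\xi$, and the strong form (P1)$'$, and your proposal only gestures at it; a complete proof would have to carry out that estimate. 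Neither issue changes the conclusion that your proposal follows the same route as the cited proof.
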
  

There is a very useful distance formula in $\C_K\bY$. Let $X,Z\in\bY$ and $x\in X$, $z\in Z$. We define
$\pi_Y(x)=\pi_Y(X)$ if $Y\ne X$ and define $\pi_X(x)=x$. Then define 
$d_Y(x,z)=\diam(\pi_Y(x)\cup \pi_Y(z))$.

\begin{notation}
Given $A,B\ge 0$, we define a threshold function by 
\[\ignore{A}{B} = \left\{\begin{array}{cl} A &\mbox{ if }A \ge B\\  0 & \mbox{ otherwise.}\end{array} \right.\]  
\end{notation}

\begin{thm}\label{distfor2}\cite[Theorem~6.3]{BBFS19}
 Let $(\bY, \{\pi_Y\})$ satisfy the strong projection axioms with constant $\xi$.
  Let $x\in X$ and $z\in Z$ be two points of $\C_K(\bY)$ with
  $X,Z\in\bY$. Then
  $$\frac 14 \sum_{Y\in\bY} \ignore{d_Y(x,z)}{K}\leq
  d_{\C_K\bY}(x,z)\leq 2\sum_{Y\in\bY} \ignore{d_Y(x,z)}{K}+3K$$
  for all $K \ge 4\xi$.
\end{thm}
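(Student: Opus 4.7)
The plan is to analyze paths in $\C_K\bY$ with the help of the finite set $\Omega := \{Y \in \bY : \dist_Y(x,z) \geq K\}$, which is finite by axiom (P2). The first preparatory step is to impose a linear order on $\Omega \cup \{X, Z\}$ via the strong axiom (P1)$'$: for any triple $W, Y, W'$ with $\dist^\pi_Y(W, W') > \xi$ one has $\pi_W(Y) = \pi_W(W')$, and iterating this consistency across the finite set $\Omega \cup \{X,Z\}$ yields a total Behrstock order. Writing this order as $X = Y_0 < Y_1 < \cdots < Y_n < Y_{n+1} = Z$, the consecutive pairs $(Y_i, Y_{i+1})$ satisfy $\dist_V(Y_i, Y_{i+1}) \leq \xi < K$ for all other $V$, because (P1)$'$ applied to the minimality of their ordering forces projections to agree outside a small set. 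Therefore the construction of $\C_K\bY$ connects any point of $\pi_{Y_i}(Y_{i+1})$ to any point of $\pi_{Y_{i+1}}(Y_i)$ by an edge of length one.

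For the upper bound, construct a path $x \to z$ by traveling, inside each $Y_i$, from the incoming projection (from $Y_{i-1}$ or from $x$) to the outgoing projection (to $Y_{i+1}$ or to $z$), and then crossing a single connecting edge into $Y_{i+1}$. Axiom (P0) implies that the entry and exit projections inside $Y_i$ sit within $\xi$ of $\pi_{Y_i}(x)$ and $\pi_{Y_i}(z)$ respectively, so each in-domain geodesic has length at most $\dist_{Y_i}(x,z) + 2\xi$. Summing gives at most $\sum_{Y \in \Omega} \dist_Y(x,z) + 2\xi n$, and adding the $n+1$ connecting edges plus two shorter-than-$K$ pieces inside $X$ and $Z$ brings the total to at most $\sum_Y \ignore{\dist_Y(x,z)}{K} + (2\xi + 1)n + 2K$. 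Using $n \leq K^{-1}\sum_Y \ignore{\dist_Y(x,z)}{K}$ together with $K \geq 4\xi$, the coefficient collapses to $2$ and the additive slack to $3K$.

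For the lower bound, take a geodesic $\gamma$ in $\C_K\bY$ from $x$ to $z$ and decompose it into maximal in-domain segments joined by single edges. For any $Y$, the projection $\pi_Y$ is $1$-Lipschitz on pieces inside $Y$, changes by at most $\xi$ on pieces inside any other domain, and by at most $2\xi$ across each connecting edge (by (P0)). For each $Y \in \Omega$, one uses the Behrstock order from the first step to isolate a sub-interval of $\gamma$ on which $\pi_Y$ sweeps out $\pi_Y(x)$ to $\pi_Y(z)$, contributing at least $\dist_Y(x,z) - O(\xi)$ to the length of $\gamma$; moreover these sub-intervals for distinct $Y \in \Omega$ can be arranged to be essentially disjoint, with each edge of $\gamma$ charged to only boundedly many domains. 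Summing over $\Omega$ and absorbing the coarseness and bounded overlap into the factor $\frac{1}{4}$ yields the lower inequality.

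The main obstacle lies in the disjointness step of the lower bound: one must rule out that a single short edge or sub-segment of $\gamma$ can substitute for genuine progress in several large domains at once. This is precisely where (P1)$'$ is decisive, since it forces the Behrstock order on $\Omega$ to be geometrically realized along any path, so that large-progress sub-intervals for different $Y_i$ must appear in that order and cannot overlap substantially. The upper bound, by contrast, is essentially an explicit construction, with the factor $2$ and the additive $3K$ absorbing the slack from sub-threshold pieces and connecting edges.
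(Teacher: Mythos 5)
The paper does not prove this theorem; it is quoted directly from \cite{BBFS19}, so there is no internal proof to compare against. Evaluating your sketch on its own terms: the upper bound, via a path threaded through $\Omega$ in Behrstock order, is in the right spirit (though the exact constants $2$ and $3K$ need more care than you give, e.g.\ the pieces inside $X$ and $Z$ are not automatically shorter than $K$). The lower bound, however, contains a genuine gap. You assert that $\pi_Y$ ``changes by at most $2\xi$ across each connecting edge (by (P0)).'' This is false. An edge of $\C_K\bY$ joins a point of $\pi_V(W)$ to a point of $\pi_W(V)$, and it exists exactly when $\dist_{Y'}(V,W) < K$ for every $Y' \notin \{V,W\}$; this guarantees only $\dist_Y(V,W) < K$, so the jump in the $\pi_Y$-coordinate across a single edge of length one can be as large as $K$. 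Axiom (P0) bounds $\diam\pi_Y(V)$ and $\diam\pi_Y(W)$ individually but says nothing about the distance between the two sets.

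Since $K$ is precisely the threshold appearing in the sum, and $K$ may be arbitrarily large relative to $\xi$, a single unit-length edge can in principle account for an entire $K$-sized block of $Y$-progress, and the ``small changes plus bounded overlap absorbed into $\tfrac14$'' accounting you propose does not close. This is exactly the hard part of the lower bound: one must show, via (P1)$'$ and the induced total order, that a geodesic cannot shortcut a domain $Y$ with $\dist_Y(x,z)\ge K$ this cheaply (this is where the quasi-tree structure of the projection complex does the real work). Your final paragraph names (P1)$'$ as decisive but does not supply the actual mechanism, and the erroneous $2\xi$ bound you rely on upstream would in any case make the quantitative bookkeeping fail.
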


Next we recall a theorem that allows us to pass projection axioms from a projection system to a collection of certain subspaces. 
Let $\bY$ be a collection of $\delta$-hyperbolic spaces and $(\bY, \{\pi_Y\})$ be a projection system with constant $\xi$. For each $Y\in\bY$, let $\A_Y$ be a collection of quasi-geodesics in $Y$. Let $\A$ be the disjoint union of all $\A_Y$'s. We also make the following assumptions. 

\begin{itemize}
\item As a collection of quasi-geodesics, $\A$ has uniform coarse constants. 
\item For $\alpha,\beta\in \A_Y$, we define $\pi_\alpha(\beta)$ to be the closest-point projection of $\beta$ to $\alpha$.
\item For $\alpha\in\A_X$ and $\beta\in\A_Y$ where $X\neq Y$, we define $\pi_\alpha(\beta)$ to be the closest point projection of $\pi_X(Y)$ to $\alpha$.
\end{itemize}

\begin{thm}\label{proj.axioms}\cite[Theorem~4.17]{BBF21}
For any $\theta>0$, there exists $\xi'>0$, depending only on $\theta,\delta,\xi$ and coarse constants of $\A$, such that the following holds. If $\diam (\pi_\alpha(\beta))\le \theta$ whenever $\alpha$ and $\beta$ are distinct elements in the same $\A_Y$, then $(\A, \{\pi_{\alpha}\})$ is a projection system with constant $\xi'$. 
\end{thm}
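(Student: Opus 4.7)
The plan is to verify axioms (P0), (P1), (P2) for $(\A, \{\pi_\alpha\})$ by case analysis based on whether the involved quasi-geodesics lie in a common $\A_Y$ or in different ones. The within-$\A_Y$ cases will be handled using the $\theta$-hypothesis together with hyperbolic geometry of $Y$, while the cross-$\bY$ cases will reduce to the given projection system $(\bY, \{\pi_Y\})$. Throughout, I will rely on two standard geometric facts: closest-point projection to a $(\lambda, c)$-quasi-geodesic in a $\delta$-hyperbolic space is coarsely Lipschitz with constants depending only on $\delta, \lambda, c$; and quasi-geodesics with uniformly bounded pairwise projections in a hyperbolic space satisfy a Behrstock-type inequality.

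For (P0), the within-$\A_Y$ bound is just $\theta$. When $\alpha \in \A_X$ and $\beta \in \A_Y$ with $X \neq Y$, the set $\pi_\alpha(\beta)$ is by definition the closest-point projection of $\pi_X(Y)$ onto $\alpha$; since $\diam \pi_X(Y) \leq \xi$ and closest-point projection is coarsely Lipschitz, $\diam \pi_\alpha(\beta)$ is bounded in terms of $\xi, \delta$, and the coarse constants of $\A$. For (P1), suppose $\dist^\pi_\beta(\alpha, \gamma) > \xi'$. If $\alpha, \beta, \gamma \in \A_Y$, apply the Behrstock inequality for quasi-geodesics with pairwise-bounded projections in a $\delta$-hyperbolic space. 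Otherwise, the definitions show that $\pi_\beta(\alpha)$ and $\pi_\beta(\gamma)$ factor (up to closest-point projection onto $\beta$) through $\pi_Y(X)$ and $\pi_Y(Z)$ where $\alpha \in \A_X$, $\gamma \in \A_Z$, so large $\dist^\pi_\beta(\alpha, \gamma)$ forces large $\dist^\pi_Y(X, Z)$; then the original (P1) gives small $\dist^\pi_X(Y, Z)$, which passes through the Lipschitz bound to small $\dist^\pi_\alpha(\beta, \gamma)$, with degenerate sub-cases when two of $X, Y, Z$ coincide handled similarly.

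For (P2), fix distinct $\alpha \in \A_X$ and $\gamma \in \A_Z$ and count $\beta \in \A_Y$ with $\dist^\pi_\beta(\alpha, \gamma) > \xi'$. When $Y \notin \{X, Z\}$, largeness again forces $\dist^\pi_Y(X, Z) > \xi$, so the original (P2) restricts us to finitely many such $Y$. For each such $Y$ (and also for $Y = X$ or $Y = Z$), it remains to bound the number of $\beta \in \A_Y$ whose closest-point projections separate a given pair of bounded-diameter subsets of $Y$, taken from $\pi_Y(X), \pi_Y(Z)$, $\alpha$, or $\gamma$ as appropriate. Such $\beta$ must lie coarsely between the two subsets along the geodesic connecting them, and the $\theta$-hypothesis is what bounds how many can do so simultaneously.

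The hard part will be this final within-$Y$ step of (P2): in a $\delta$-hyperbolic space $Y$, given two bounded-diameter subsets, show that only finitely many quasi-geodesics $\beta \in \A_Y$ have far-apart closest-point projections of these subsets, using the pairwise-$\theta$-bound on $\A_Y$. My approach will be to use the Morse lemma to replace quasi-geodesics by geodesics up to bounded error, argue that any such separating $\beta$ must track the geodesic between the two subsets over a long interval, and then invoke the $\theta$-bound: two quasi-geodesics in $\A_Y$ tracking the same long geodesic segment would have closest-point projections onto each other of diameter larger than $\theta$, contradicting the hypothesis. All constants can be expressed in terms of $\theta, \delta, \xi$, and the coarse constants of $\A$, giving the required $\xi'$.
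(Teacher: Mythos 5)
The paper does not give its own proof of this theorem: it is quoted directly from \cite[Thm.~4.17]{BBF21}, so there is no in-paper argument to compare against, and I can only assess your outline on its own merits.

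Your case decomposition and the reductions for (P0) and (P1) are sound. The degenerate subcases you wave at (for instance $\alpha,\gamma$ in the same $\A_X$ and $\beta\in\A_Y$ with $Y\neq X$, where $\pi_\beta(\alpha)=\pi_\beta(\gamma)$ by definition and the conclusion is automatic from (P0)) do work out, though they deserve to be spelled out. The within-$\A_Y$ Behrstock inequality for quasi-geodesics with pairwise $\theta$-bounded projections, and the ``reverse-Lipschitz'' passage from large $\dist^\pi_\beta(\alpha,\gamma)$ to large $\dist^\pi_Y(X,Z)$, are both standard and correct.

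There is, however, a genuine gap in the final within-$Y$ step of (P2). As written, you argue that a separating $\beta\in\A_Y$ must track ``a long interval'' of the geodesic $[a,b]$ between the two relevant bounded sets, and that two such $\beta$'s tracking ``the same long geodesic segment'' would violate the $\theta$-bound, ``contradicting the hypothesis.'' Read literally, this shows there is at most \emph{one} separating $\beta$ per $Y$, which is false. Different separating $\beta$'s need not track the same subsegment: each tracks roughly the interval of $[a,b]$ between parameters $d(a,\beta)$ and $|[a,b]|-d(b,\beta)$, and for $|[a,b]|$ much larger than $\xi'$ these intervals can be disjoint, so no two of them are forced to overlap. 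What rescues the argument is that $|[a,b]|$ is a finite quantity depending only on the fixed pair $\alpha,\gamma$: subdivide $[a,b]$ into $O(|[a,b]|/\xi')$ subintervals of length about $\xi'/2$; each separating $\beta$ fellow-travels an interval of length at least $\xi'-O(\delta,\lambda,c)$, hence fully contains one of the subintervals; and the $\theta$-bound then shows at most one $\beta$ per subinterval. This yields a finite bound $O(|[a,b]|/\xi')$ rather than a contradiction. Without this pigeonhole step the argument as stated proves a false statement, so it cannot be taken as complete; with it, the outline is correct.
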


\subsection{(Relatively) hierarchically hyperbolic spaces}\label{def2}

In this paper, we deal with \emph{(relatively) hierarchically hyperbolic spaces} and \emph{(relatively) hierarchically hyperbolic groups}. Coarsely speaking, a (relative) HHS is a pair $(\X,\s)$, where $\X$ is a quasi-geodesic space and $\s$ is an index set, with some extra structure. A full definition can be found in \cite[Definition 1.1,1.21]{BHS19}. Some important information from the definition is collected below. 

\begin{itemize}
 \item An element $U\in \s$ is called a \emph{domain} of $\X$. $\s$ has a partial order $\nest$ (called \emph{nesting}) and a symmetric relation $\orth$ (called \emph{orthogonality}). These two relations are required to be mutually exclusive. Any two elements that are neither comparable under the partial order nor mutually orthogonal are defined to be mutually \emph{transversal} and we denote this relation by $\trans$. We denote the set of domains nested (respectively, properly nested) in $U$ by $\s_U$ (respectively, $\s_U^{\circ}$) . 

 \item There is a unique $\nest$-maximal element $S$ in $\s$ and a uniform bound on the length of $\nest$-chains in $\s$, called the \emph{complexity} of $(\X,\s)$. The \emph{level} $\ell(V)$ of $V\in\s$ is defined inductively as follows. If $V$ is $\nest$-minimal then $\ell(V)=1$. The element $V$ has level $k+1$ if $k$ is the maximal integer such that there exists $U\propnest V$ with $\ell(U)=k$. 

 \item For HHSes, there is a set $\{(\C U,d_U):U\in\s\}$ of uniformly hyperbolic spaces and a set of uniformly coarsely Lipschitz and coarsely surjective maps $\pi_U: \X\to\C U$ for all $U\in\s$. For relative HHSes, the complexity is at least $2$. If $U$ is $\nest$-minimal, $\C U$ is not required to be hyperbolic. This is the only difference between HHSes and relative HHSes in definition. 

 \item For $U\propnest V$ or $U\trans V$, there is a uniformly bounded set $\rho^U_V \subset \C V$. 
 \item For $U\propnest V$, there is a coarse map $\rho^V_U:\C V\to\C U$. 
 \item Whenever $V\nest W$ and $W\orth U$, we require that $V\orth U$. 
 \item (Orthogonal containers) For each $T\in\s$ and each $U\in\s_T$ for which $\{V\in\s_T\mid V\orth U\}\neq\emptyset$, there exists $W\in \s_T^{\circ}$, so that whenever $V\orth U$ and $V\nest T$, we have $V\nest W$. We say that $W$ is an \emph{orthogonal container} of $U$ in $T$ if $W$ is a $\nest$-minimal element satisfying the above property. Let $\text{cont}^{\perp}_T U$ denote the set of all orthogonal containers of $U$ in $T$. If $T$ is the maximal element of $\s$, then we suppress it from the notation and write $\cont U$. We set $\s_U^\orth=\{V\in\s\mid V\orth U\}\cup\{A\}$, where $A$ is an arbitrary element of $\cont U$. 
 \item (Consistency) For every $x\in X$, the tuple $(\pi_U(x))_{U\in \s}$ is $\kappa_0$-consistent (defined below). If $U\nest V$, then $d_W(\rho^U_W,\rho^V_W)\leq\kappa_0$ whenever $W\in\s$ satisfies either $V\propnest W$ or $V\trans W$ and $U\notorth W$. 
 \item (Bounded geodesic image) There exists $E>0$ such that for all $W\in\s$, all $V\in\s_W^{\circ}$, and all $x,y\in \X$ such that some geodesic from $\pi_W(x)$ to $\pi_W(y)$ stays $E$-far from $\rho^V_W$, we have $d_V(\pi_V(x),\pi_V(y))\leq E$. We will refer this property as BGI in this paper. 
\end{itemize}

\begin{defn}[$\kappa$-consistent tuple]
For a number $\kappa\geq 0$, let $\vec b=(b_U)_{U\in \s}\in\prod_{U\in \s}{2^{\C U}}$ be a tuple such that every set $b_U$ has diameter at most $\kappa$. We say that $\vec b$ is $\kappa$-consistent if 
\[
\min\big\{d_U(b_U,\rho^V_U),d_V(b_V,\rho^U_V)\big\}\leq\kappa \hspace{2mm}\text{ whenever } U\trans V, \text{ and}
\]\[
\min\big\{d_V(b_V,\rho^U_V),\diam_U(b_U\cup\rho^V_U(b_V))\big\}\leq\kappa \hspace{2mm}\text{ whenever } U\propnest V.
\]
\end{defn}

For convenience, we always take $E$ to be the greatest constant in all coarseness from the above list (see \cite[Remark 1.6]{BHS19} for discussion on these constants). For the rest of this subsection, let $(\X,\s)$ be a relative HHS.

\begin{notation}
Given $x,y\in \X$, we write $d_U(x,y)$ to mean $d_U(\pi_U(x),\pi_U(y))$. If $U\trans V$ or $U\propnest V$, then we write $d_V(x,\rho^U_V)$ to mean $d_V(\pi_V(x),\rho^U_V)$. 
\end{notation}

\begin{notation}
Given two functions $f,g:X\to \R$ and $A,B> 0$, we write $f\preceq_{(A,B)}g$ to mean $f(x)\leq Ag(x)+B$ for any $x\in X$. We write $f\asymp_{(A,B)} g$ to mean $\frac{1}{A}f(x)-B\le g(x) \le Af(x)+B$ for any $x\in X$. Sometimes we omit the constants to mean that the inequality holds for some constants.
\end{notation}

The powerful Masur--Minsky distance formula \cite{MM00} shows that the distance between points in a mapping class group is coarsely the sum of the distances between the projections of these points to the curve graphs of all subsurfaces. Like mapping class groups, relative HHSes also satisfy a Masur–Minsky-style distance formula. 

\begin{thm}[Distance formula]\cite[Theorem~6.10]{BHS19}\label{thm:distance_formula}
 There exists $s_0$ such that for all $s\geq s_0$ there exists a
 constant $C>0$ such that for all $x,y\in\X$,
 $$d_{\X}(x,y)\asymp_{(C,C)}\sum_{W\in\s}\ignore{d_{W}(x,y)}{s}.$$
\end{thm}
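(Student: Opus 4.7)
The plan is to prove the two inequalities separately. The easier direction is bounding the sum of projection distances by $\dist_\X(x,y)$; the harder direction is reconstructing $\dist_\X(x,y)$ from projections. Both will use consistency, bounded geodesic image (BGI), and finite complexity.

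For the lower bound $\sum_W \ignore{\dist_W(x,y)}{s} \preceq \dist_\X(x,y)$: since each $\pi_W$ is uniformly coarsely Lipschitz, one has $\dist_W(x,y) \leq K\dist_\X(x,y)+K$ per domain. Per-term control is not enough, because many $W$ could contribute simultaneously. The key tool is a \emph{passing-up lemma} (the relative-HHS analogue of Masur--Minsky's large link lemma): if many $V \nest W$ satisfy $\dist_V(x,y) \geq s'$ for an appropriate $s' \gg s$, then some $V' \propnest W$ itself has $\dist_{V'}(x,y) \geq s$. Iterating down the level function and using the finite complexity, one concludes that the number of $W\in\s$ with $\dist_W(x,y) \geq s$ grows at most linearly in $\dist_\X(x,y)$, so combined with the coarse Lipschitz bound this gives the desired inequality.

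For the upper bound $\dist_\X(x,y) \preceq \sum_W \ignore{\dist_W(x,y)}{s}$, I would proceed by induction on the complexity of $(\X,\s)$, constructing a \emph{hierarchy path} from $x$ to $y$ whose length is controlled by the sum. At the $\nest$-maximal domain $S$, take a geodesic $\gamma$ in $\C S$ from $\pi_S(x)$ to $\pi_S(y)$. Apply BGI to isolate the domains $V \propnest S$ for which $\gamma$ passes within $E$ of $\rho^V_S$; these are the ``active'' subdomains. Behrstock-type inequalities (encoded in consistency) let me linearly order these active subdomains along $\gamma$, up to uniformly bounded overlap. For each active $V$, I recursively produce a quasi-geodesic in (a standard product region over) $V$ realizing the sum $\sum_{W\nest V}\ignore{\dist_W(x,y)}{s}$ using the inductive hypothesis applied to the relative HHS structure on $\s_V$. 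Concatenating these subpaths with arcs tracking $\gamma$ in $\C S$ yields a path from (a point close to) $x$ to (a point close to) $y$ whose length is bounded by $C\sum_{W\in\s}\ignore{\dist_W(x,y)}{s} + C$. For the base of the induction, at a $\nest$-minimal $U$, $\C U$ need not be hyperbolic, but $\pi_U$ is coarsely surjective; here a path in $\X$ shadowing any path in $\C U$ between $\pi_U(x)$ and $\pi_U(y)$ gives the required length bound in terms of $\dist_U(x,y)$ alone.

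The main obstacle I expect is in the upper-bound induction: verifying that the recursion into active subdomains assembles into a \emph{single} coherent path of the right length, rather than many disjoint pieces whose endpoints must still be joined. This requires a consistent ``realization'' step --- building a point of $\X$ whose projection tuple matches the prescribed coordinates along $\gamma$ --- which is where consistency and BGI interact most delicately. The secondary obstacle is controlling the base case in the relative HHS setting: because minimal $\C U$ can be an arbitrary metric space and need not even be quasi-geodesic in a useful sense, one must work with the ambient geometry of a standard product region over $U$ rather than with $\C U$ itself, and show that the length contribution of such a region is genuinely captured by $\ignore{\dist_U(x,y)}{s}$.
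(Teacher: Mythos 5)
The paper does not prove this theorem: it is quoted from \cite{BHS19} (Theorem~6.10) and imported as a black box, so there is no proof of the paper's own to compare against.

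Your reconstruction of the BHS19 argument is broadly the right picture (passing-up for one direction, induction on complexity building hierarchy paths via realization for the other), but the arithmetic in the direction you call easier has a gap. Knowing that the number of domains $W$ with $\dist_W(x,y)\ge s$ is $O(\dist_\X(x,y))$, and that each such term is at most $K\dist_\X(x,y)+K$ by the coarse Lipschitz bound, yields a quadratic rather than a linear bound on $\sum_W\ignore{\dist_W(x,y)}{s}$. The correct argument must bound the sum directly: one works along a (hierarchy) path from $x$ to $y$ and shows that the subintervals on which each relevant domain $W$ registers a definite amount of progress in $\C W$ have uniformly bounded overlap, so that the sum is at most a bounded multiple of the path's length --- this is where the passing-up/large-links estimate really does its work, not as a raw count of relevant domains. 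Your account of the other direction, including the realization step needed to glue the recursive subpaths into a single coherent path and the special handling of the non-hyperbolic minimal coordinate spaces in the relative setting (via coarse surjectivity of $\pi_U$ and the standard product regions), is correctly identified as the delicate part.
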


Closely related to the distance formula is the existence of \emph{hierarchy paths}. 

\begin{defn}[Hierarchy path]
A $(D,D)$-quasi-geodesic $\gamma\subset \X$ is a $D$-\emph{hierarchy path} if $\pi_U(\gamma)$ is an unparametrized $(D,D)$-quasi-geodesic for each $U\in\s$.
\end{defn}

\begin{thm}[Existence of hierarchy paths]\cite[Theorem~6.11]{BHS19}\label{thm:hierarchy_path}
 There exists $D_0$ such that any two points in $\X$ are joined by a $D_0$-hierarchy path.
\end{thm}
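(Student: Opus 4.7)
The plan is to build a hierarchy path by induction on the complexity of $(\X,\s)$, using the distance formula (Theorem~\ref{thm:distance_formula}) to coarsely identify which domains contribute to $\dist_{\X}(x,y)$ and then stitching together quasi-geodesics in those domains into a single path in $\X$.

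First I would establish a realization principle: every sufficiently consistent tuple $(b_U)_{U\in\s}$ with each $b_U\in\C U$ is of the form $(\pi_U(z))_{U\in\s}$ up to bounded error, for some $z\in\X$. This is proved recursively using the consistency axiom, bounded geodesic image (BGI), and the coarse surjectivity of $\pi_U$ at $\nest$--minimal domains---the latter is where the relative HHS setting requires no extra work, since we do not need $\C U$ to be hyperbolic at the bottom of the hierarchy. Realization translates ``build a path in $\X$'' into ``build compatible coarse paths in the $\C U$ simultaneously.''

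Second, I would carry out the hierarchical construction. Fix a geodesic $\alpha_S$ in $\C S$ from $\pi_S(x)$ to $\pi_S(y)$. By BGI, any $V\propnest S$ with $\dist_V(x,y)$ above a threshold must have $\alpha_S$ passing $E$--close to $\rho^V_S$; conversely, domains whose $\rho$--marker is far from $\alpha_S$ contribute nothing to the distance formula. The ``active'' proper subdomains are thereby linearly ordered along $\alpha_S$. The inductive hypothesis supplies, for each such $V$, a hierarchy path in $\s_V$ between realizers of $\pi_V(x)$ and $\pi_V(y)$. Concatenating these in the order dictated by $\alpha_S$, processing pairwise--orthogonal families in parallel, and advancing the $\C S$--coordinate along $\alpha_S$ between transitions produces a candidate path $\gamma$ in $\X$. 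One then checks that the resulting tuples are consistent throughout (so realization yields an honest path in $\X$) and that $\gamma$ projects to an unparametrized quasi-geodesic in every $\C U$.

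The main obstacle is controlling the quasi-geodesic constants uniformly in the complexity. A naive induction would let the constants depend on the level, so the delicate point is that the projection, consistency, and BGI constants are themselves uniform across $\s$, and that transitions between consecutive sub-hierarchies along $\alpha_S$ incur only a bounded penalty per transition (the total number of transitions being controlled by the distance formula). A separate subtlety is verifying the unparametrized quasi-geodesic property in a domain $U$ that is transversal to every ``active'' subdomain: there one uses the Behrstock-type inequality built into consistency to conclude that $\gamma$ cannot move by more than a uniform amount in $\C U$, matching the distance formula's verdict that $\dist_U(x,y)$ is uniformly small.
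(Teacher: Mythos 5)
The paper imports this theorem from \cite{BHS19} by citation only, so there is no in-paper proof to compare against; I'll assess your sketch as a reconstruction of the BHS19 argument. At the level of strategy your sketch is faithful: the realization theorem, induction on complexity, BGI-based ordering of active subdomains along a geodesic in $\C S$, and recursive stitching of lower-level hierarchy paths are indeed the ingredients of the cited proof. Two substantive caveats. First, in \cite{BHS19} the distance formula (their Theorem~6.10) and the existence of hierarchy paths (their Theorem~6.11) are established by a \emph{joint} induction on complexity; one cannot freely invoke the ambient, top-level distance formula to identify ``active'' domains while the top-level hierarchy path is still being built. Only the distance formula at strictly lower levels is available at each stage, and the two statements must be proved in tandem --- your sketch treats the distance formula as an a priori given, which quietly reverses the logical dependency. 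Second, you correctly flag but do not resolve the crux: showing the concatenation is a genuine $(D_0,D_0)$-quasi-geodesic in $\X$, with $D_0$ depending only on the HHS constants and not on $\dist_\X(x,y)$ or the number of active subdomains, is where essentially all the technical work (partial realization, gate maps, control of the number and location of transition points) resides. As a proof sketch the plan is sound and matches the cited source; as a proof it is still missing its hardest step.
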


\begin{rem}\label{hierarchy}
    Let $\gamma$ be a $D$-hierarchy path connecting $x$ and $y$. By the construction of hierarchy paths in \cite{BHS19}, $\pi_U(\gamma)$ is contained in the $D$-neighborhood of a geodesic connecting $\pi_U(x)$ and $\pi_U(y)$. If $\C U$ is hyperbolic, this is easy to see from the Morse Lemma. In the general case, this deserves its own mention. 
\end{rem}

There is an important class of subspaces in relative HHSes. We will consider them in Section \ref{sec_appoximation}. 

\begin{defn}[Standard product region, standard nested region, standard orthogonal region]\label{std_product}
Fix $U\in\mathfrak S$ and $\kappa\geq\kappa_0$. Let $\F_U$ be the set of $\kappa$-consistent tuples in $\prod_{V\in\mathfrak S_U}2^{\C V}$. Let $\E_U$ be the set of $\kappa$-consistent tuples in $\prod_{V\in\mathfrak S_U^\orth-\{A\}}2^{\C V}$. Let $\pr_U=\F_U\times \E_U$. We can define a coarse map $\phi_U:\pr_U\to\X$ as follows. 

For each $(\vec a,\vec b)\in \F_U\times \E_U$, and each $V\in\mathfrak S$, define the coordinate $(\phi_U(\vec a,\vec b))_V$ as follows.  If $V\nest U$, then $(\phi_U(\vec a,\vec b))_V=a_V$.  If $V\orth U$, then $(\phi_U(\vec a,\vec b))_V=b_V$.  If $V\trans U$, then $(\phi_U(\vec a,\vec b))_V=\rho^U_V$. And if $U\propnest V$, then $(\phi_U(\vec a,\vec b))_V=\rho^U_V$. We can check that the tuple $\phi_U(\vec a,\vec b)$ is $\kappa$-consistent, and thus the realization theorem \cite[Theorem 3.1]{BHS19} supplies the map $\phi_U:\pr_U\to\X$ (see \cite[\S 5B]{BHS19} for more details). 

For convenience, we do not distinguish between $\pr_U$ and its image in $\X$. We call $\pr_U$ the \emph{standard product region}. By choosing any copy of $\F_U$ in the direct product, $\phi_U$ restricts to a coarse map $\phi^\nest:\F_U\to\X$. We also define $\phi^\orth:\E_U\to\X$ in the same way. We call $\F_U$ and $\E_U$ the \emph{standard nested region} and the \emph{standard orthogonal region}, respectively. 
\end{defn}

\begin{rem}
    By definition, $\F_U$, $\E_U$ and $\pr_U$ depend on the constant $\kappa$. In this paper, we fix some $\kappa\ge \kappa_0$ and do not mention it again.
\end{rem}

It is known that $(\F_U,\s_U)$, $(\E_U,\s_U^\orth)$ are both relatively hierarchically hyperbolic. By definition of $\F_U$, $\E_U$ and $\pr_U$, there are natural retractions from $\X$ to these subspaces. We call such a map a \emph{gate map}. Take $\F_U$ for example. We denote the gate map to $\F_U$ by $\g_{\F_U}$. For all $x\in\X$ and all $V\in\s$ such that $\C V$ is hyperbolic, $\pi_V(\g_{\F_U}(x))$ uniformly coarsely coincides with the closest point projection of $\pi_V(x)$ to the quasi-convex subset $\pi_V(\F_U)$. In fact, gate maps can be defined for all ``hierarchically quasi-convex'' subspaces, which form a larger class of subspaces of relative HHSes (see \cite[\S 5]{BHS19} for HHSes). 

For any (relative) HHS $(\X,\s)$, an automorphism is roughly speaking a bijection from $(\X,\s)$ to itself that preserves its (relative) HHS structure \cite[\S 1G]{BHS19}. The automorphisms of $(\X,\s)$ form a group $\aut(\s)$, which we call the \emph{automorphism group} of $(\X,\s)$. 

\begin{defn}[(Relatively) hierarchically hyperbolic groups]
A finitely generated group $G$ is \emph{(relatively) hierarchically hyperbolic}
if there exists a (relatively) hierarchically hyperbolic space $(\X,\s)$ and an action $G\to\aut(\s)$ such that the action $G\curvearrowright X$ is metrically proper and cobounded, and such that the induced action on $\s$ is cofinite. 
\end{defn}

Note that if $G$ is (relatively) hierarchically hyperbolic by virtue of its action on the (relatively) hierarchically hyperbolic space $(\X,\s)$, then $(G,\s)$ is a (relatively) hierarchically hyperbolic structure with respect to any word-metric on $G$. 

Let $\aut(\s;V)$ be the group of automorphisms $g\in\aut(\s)$ such that $g\cdot V=V$. Then there is a \emph{restriction homomorphism} $\theta_V:\aut(\s;V)\to\aut(\s_V)$ defined as follows. Given $g\in\aut(\s;V)$, let $\theta_V(g)$ act like $g$ on the substructure $\s_V$. For a group $G < \text{Aut}(\s)$, we write $\stab_G(V)$ to mean $G\cap\aut(\s;V)$ and write $G_V$ to mean the image of $\stab_G(V)$ under $\theta_V$. 

For many HHGs (for example, the case of mapping class groups), every $G_V$ acts acylindrically on $\C V$. However, not all HHGs have this property \cite{DHS20}.

\begin{defn}[colorability]\label{bbfcolor}
Let $(G,\s)$ be a relative HHG. Let $\s'\subset \s$ be a $G$-invariant subset. We say $\s'$ is \emph{colorable} if $\s'$ admits a decomposition $\s'=\bigsqcup_{i=1}^\chi\s'_i$ into finitely many $G$-invariant families $\s'_i$ such that any two domains in the same family are transverse. Such a decomposition is called a \emph{coloring} of $\s'$. We say a relative HHG $(G,\s)$ is \emph{colorable} if $\s$ is colorable. 
\end{defn}

The notion of colorability is formalized in \cite{DMS23,HP22}. There are many classes of (virtually) colorable HHGs, as listed in those papers. In particular, a coloring is constructed for (a finite-index subgroup of) a mapping class group in \cite[\S5]{BBF15}, from which the notion comes. However, one cannot expect that all HHGs are virtually colorable \cite{Hag23}. Nevertheless, \cite[Proposition~3.2]{HP22} provides a sufficient condition for an HHG to be virtually colorable. 

\begin{rem}
    In this paper, we only consider unbounded domains, i.e. domains with unbounded associated hyperbolic spaces. As an abuse of terminology, we say a relative HHG $(G,\s)$ is colorable if the collection of unbounded domains is colorable. 
\end{rem}

\section{Thick distance formula}
\label{sec_thickdistanceformula}

In this section, we will prove a \emph{thick distance formula} that is similar to \cite[Theorem 4.13]{BBF21}. This allows us to estimate the distance in a relative HHS by counting only ``thick'' segments of a hierarchy path instead of the whole hierarchy path. The reader should be aware that the definitions in this section are different from those in \cite{BBF21}. In particular, we do not have tight geodesics in a general HHG. 

Let $(\X,\s)$ be a relative HHS and fix $T>100E+10D_0$ (see Section \ref{def2} for constants associated with a relative HHS). As in \cite[\S2B]{BHS19}, we say a domain $U\in\s$ is $T$-\emph{relevant} for $x,y\in \X$ if $d_U(x,y)>T$. We write $\rel_T(x,y)$ for the set of $T$-relevant domains for $x,y$, and define $\rel_T(V;x,y):=\rel_T(x,y)\cap \s_V^{\circ}$. We write $\rel_T^{m}(V;x,y)$ for the set of $\nest$-maximal elements in $\rel_T(V;x,y)$. When $x$ and $y$ are fixed, we often omit them from the notation.

\begin{lem} \label{at_most_two}
Given $x,y\in\X$ and $U\in\rel_T(x,y)$, there exist at most two domains $V_1,V_2\in\rel_T(x,y)$ such that $U\in \rel_T^{m}(V_i;x,y)$ for $i=1,2$.
\end{lem}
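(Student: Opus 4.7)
The plan is a proof by contradiction: assume there exist three distinct $V_1,V_2,V_3\in\rel_T(x,y)$ with $U\in\rel_T^{m}(V_i;x,y)$, and derive a bound on $\dist_{V_m}(x,y)$ for some $V_m$ that contradicts $V_m\in\rel_T(x,y)$.

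First I will show the $V_i$ are pairwise transverse. If $V_i\propnest V_j$, then $V_i\in\rel_T(V_j;x,y)$ with $U\propnest V_i$, violating $\nest$--maximality of $U$ in $\rel_T(V_j;x,y)$. If instead $V_i\orth V_j$, then $U\nest V_i$ forces $U\orth V_j$, contradicting $U\propnest V_j$. Hence $V_i\trans V_j$ for all $i\ne j$.

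Next I Behrstock-orient each transverse pair. Consistency gives
\[\min\bigl\{\dist_{V_i}(x,\rho^{V_j}_{V_i}),\,\dist_{V_j}(x,\rho^{V_i}_{V_j})\bigr\}\le\kappa_0\]
and the same with $y$ in place of $x$. The two ``same-side'' configurations, where both $x$ and $y$ project $\kappa_0$--close to the same transition point, would yield $\dist_{V_i}(x,y)\le 2\kappa_0+E$ or $\dist_{V_j}(x,y)\le 2\kappa_0+E$, both impossible since $T>100E$. Ruling these out, each pair admits exactly one of the two opposite configurations, which I record as an orientation $V_i\to V_j$ when $\dist_{V_j}(x,\rho^{V_i}_{V_j})\le\kappa_0$ and $\dist_{V_i}(y,\rho^{V_j}_{V_i})\le\kappa_0$. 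The resulting tournament on $\{V_1,V_2,V_3\}$, whether a linear order or a $3$--cycle, must have a vertex with both positive in-degree and positive out-degree, so after reindexing I may assume $V_1\to V_m\to V_3$.

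Finally, I bound $\dist_{V_m}(x,y)$. Since $U\nest V_1$, $V_1\trans V_m$, and $U\notorth V_m$ (because $U\nest V_m$), the nesting clause of consistency gives $\dist_{V_m}(\rho^{U}_{V_m},\rho^{V_1}_{V_m})\le\kappa_0$; symmetrically, $\dist_{V_m}(\rho^{U}_{V_m},\rho^{V_3}_{V_m})\le\kappa_0$. Combining these with the orientation bounds $\dist_{V_m}(x,\rho^{V_1}_{V_m})\le\kappa_0$ and $\dist_{V_m}(y,\rho^{V_3}_{V_m})\le\kappa_0$ and chaining through the triangle inequality for ``diameter of a union'' produces $\dist_{V_m}(x,y)\le 4\kappa_0\le 4E<T$, contradicting $V_m\in\rel_T(x,y)$. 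The main obstacle is the pigeonhole step extracting a ``middle'' domain $V_m$ from the tournament; once that is in hand, the rest is routine bookkeeping of the small constants supplied by the consistency axioms.
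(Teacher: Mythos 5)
Your proof is correct and follows essentially the same route as the paper: show the $V_i$ are pairwise transverse, extract a ``middle'' domain $V_m$, and use the nesting clause of consistency (giving $\dist_{V_m}(\rho^U_{V_m},\rho^{V_i}_{V_m})\le\kappa_0$) to bound $\dist_{V_m}(x,y)$ far below $T$, contradicting $V_m\in\rel_T(x,y)$. The only cosmetic difference is that you rebuild the ordering from scratch via consistency and a tournament/pigeonhole step, whereas the paper simply cites \cite[Proposition 2.8]{BHS19} for the total order on pairwise transverse relevant domains; the final contradiction is the same.
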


\begin{proof}
Suppose there exist three such domains $V_1,V_2,V_3$. Since $U$ is maximal in each $\rel_T(V_i;x,y)$, we know that the $V_i$ are not $\nest$-comparable. Moreover, any two of them are not orthogonal since $U \propnest V_i$. Thus, $V_i$'s must be pairwise transverse. 

By \cite[Proposition 2.8]{BHS19}, any set of pairwise transverse elements in $\rel_T(x,y)$ has a total order $<$, obtained by setting $U<V$ whenever $d_U(y,\rho^V_U)\leq E$. We assume that $V_1<V_2<V_3$. 

On the one hand, $d_{V_2}(\rho^{V_1}_{V_2},\rho^{V_3}_{V_2})\geq d_{V_2}(x,y)-2E>T-2E$ by the triangle inequality. On the other hand, $d_{V_2}(\rho^{V_1}_{V_2},\rho^{U}_{V_2})\leq \kappa_0$ and $d_{V_2}(\rho^{V_3}_{V_2},\rho^{U}_{V_2})\leq \kappa_0$ by consistency, which gives $d_{V_2}(\rho^{V_1}_{V_2},\rho^{V_3}_{V_2})\leq 2\kappa_0<T-2E$. This gives a contradiction. 
\end{proof}

\begin{defn}[$T$-thickness]\label{thick}
Given $\s'\subset \s$, we say a pair of points $(x,y)\in \X\times\X$ is \emph{$T$-thick for $\s'$} if $\diam (\pi_U(x)\cup\pi_U(y)) \le T$ for all $U\in \s'$. We define $\PP_T(\s')$ to be the set of all $T$-thick pairs of points for $\s'$. If $\s'=\{U\}$, we also say $(x,y)$ is $T$-thick for $U$ and write $(x,y)\in \PP_T(U)$ .
\end{defn}

Note that $(x,y)\in \PP_T(U)$ if and only if $U\notin \rel_T(x,y)$. Also note that $(x,y)\in \PP_T(\s_V^{\circ})$ if and only if $\rel_T(V;x,y)=\emptyset$.

\begin{lem}\label{geodthick}
Let $D_0$ be the constant provided by Theorem \ref{thm:hierarchy_path}. For any $x,y\in\X$, let $\gamma$ be a $D_0$-hierarchy path between $x,y$. Given any $U\in \s$ and any $x',y'\in\gamma$, then
\[d_U(x',y')\le d_U(x,y)+2D_0.\]
In particular, if $(x,y)\in \PP_T(U)$, then $(x',y')\in \PP_{T+2D_0}(U)$.
\end{lem}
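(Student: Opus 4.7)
The plan is to read the lemma as a direct consequence of Remark \ref{hierarchy}. By that remark, since $\gamma$ is a $D_0$--hierarchy path from $x$ to $y$, the projected path $\pi_U(\gamma)$ lies in the $D_0$--neighborhood of some geodesic $\sigma\subset \C U$ joining $\pi_U(x)$ and $\pi_U(y)$. Note that this is exactly the content we need: in the hyperbolic case it would also follow from the Morse lemma applied to the unparametrized $(D_0,D_0)$--quasi-geodesic $\pi_U(\gamma)$, but in the relative HHS setting $\C U$ may be $\nest$--minimal and non-hyperbolic, which is precisely the case that Remark \ref{hierarchy} is designed to cover.

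Given this, I would argue as follows. Since $x',y'\in\gamma$, the projections $\pi_U(x')$ and $\pi_U(y')$ lie in $\pi_U(\gamma)$, hence each is within distance $D_0$ of some points $p',q'$ on $\sigma$. Since $p',q'$ both lie on a geodesic from $\pi_U(x)$ to $\pi_U(y)$, we have
\[\dist_{\C U}(p',q')\le \dist_{\C U}(\pi_U(x),\pi_U(y))=\dist_U(x,y).\]
Two applications of the triangle inequality then yield
\[\dist_U(x',y')\le \dist_{\C U}(\pi_U(x'),p')+\dist_{\C U}(p',q')+\dist_{\C U}(q',\pi_U(y'))\le \dist_U(x,y)+2D_0,\]
which is the first assertion. (The same reasoning works if one prefers to treat $\pi_U(x)$ as a uniformly bounded set rather than a point, replacing distances by diameters of unions with only cosmetic changes.)

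For the ``in particular'' clause, if $(x,y)\in\PP_T(U)$ then by definition $\diam(\pi_U(x)\cup\pi_U(y))\le T$, so the first part gives $\dist_U(x',y')\le T+2D_0$, i.e.\ $(x',y')\in\PP_{T+2D_0}(U)$.

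The only non-routine point is making sure the ``stays near a geodesic'' property is available even when $\C U$ is not hyperbolic, and this is exactly the content of Remark \ref{hierarchy}; apart from that, the proof is a two-line triangle inequality argument, and I do not anticipate any real obstacle.
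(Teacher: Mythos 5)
Your proposal is correct and follows exactly the same route as the paper: invoke Remark \ref{hierarchy} to place $\pi_U(\gamma)$ in the $D_0$--neighborhood of a geodesic from $\pi_U(x)$ to $\pi_U(y)$, then apply the triangle inequality. The paper's proof is simply a terser version of your argument, and you correctly note the key point that Remark \ref{hierarchy} (rather than the Morse lemma alone) is needed because $\C U$ may fail to be hyperbolic when $U$ is $\nest$--minimal.
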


\begin{proof}
By Remark \ref{hierarchy}, $\pi_U(\gamma)$ lies in the $D_0$-neighborhood of a geodesic connecting $\pi_U(x)$ and $\pi_U(y)$. The conclusion then follows from the triangle inequality. 
\end{proof}

\begin{notation}
Throughout this paper, let $\hat T = T +2D_0$ and $\check T = T-2D_0$ for every constant $T>2D_0$.
\end{notation}

\begin{notation}
Given two points $x,y$ in a hyperbolic space, we write $[x,y]$ to mean a geodesic segment between $x,y$, which is coarsely unique. For an interval $I$ or a path $\gamma$, we write $I^-,I^+$ or $\gamma^-,\gamma^+$ to mean their endpoints. 
\end{notation}

\begin{defn}[$(T,R)$-thick distance]\label{trthick}
Fix sufficiently large constants $T,R$. Let $\gamma$ be a $D_0$-hierarchy path between $x$ and $y$. Let $\gamma_1,\dots,\gamma_n \subset \gamma$ be disjoint subpaths occurring in this order such that $(\gamma_i^-,\gamma_i^+)\in\PP_T(\s_V^{\circ})$ for each $i$. 

The {\it $(T,R)$-thick distance in $V$} is denoted by $d_V^{T,R}(x,y)$ and is defined to be the supremum of $\sum_{i=1}^{n} \ignore{d_V(\gamma_i^-,\gamma_i^+)}{R}$ over all such choices for $\gamma_i$'s, and for all $D_0$-hierarchy paths from $x$ to $y$. 
\end{defn}

It is always true that $d_V^{T,R}(x,y)\le \ignore{d_V(x,y)}{R}$. This becomes an equality if $V$ is $\nest$-minimal. For the opposite direction, we have the following estimate. 

\begin{lem}\label{thick_distance}
Fix constants $T,R>100E$. For any $x,y\in\X$ and $W\in\s$, we have 
\[\ignore{d_W(x,y)}{R}\leq d^{T,R}_W(x,y)+(6E+2R)|\rel_{\check T}^{m}(W;x,y)|.\]
\end{lem}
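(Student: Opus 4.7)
The plan is to fix a $D_0$--hierarchy path $\gamma$ from $x$ to $y$, excise from $\gamma$ one small subinterval near the $W$--projection of each maximal $\check T$--relevant sub-domain of $W$, and use the resulting complementary pieces as admissible $T$--thick segments in Definition \ref{trthick}. If $|\rel_{\check T}^m(W;x,y)|=0$ then $\rel_{\check T}(W;x,y)=\emptyset$, so $\dist_V(x,y) \le \check T$ for every $V \in \s_W^\circ$, and Lemma \ref{geodthick} makes the pair $(x,y)$ itself $T$--thick for $\s_W^\circ$. Taking $\gamma_1 = \gamma$ in Definition \ref{trthick} then yields $\dist_W^{T,R}(x,y) \ge \ignore{\dist_W(x,y)}{R}$, and the claim is trivial.

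For the main case, enumerate $\rel_{\check T}^m(W;x,y) = \{V_1, \ldots, V_n\}$. Because $\dist_{V_i}(x,y) > \check T > E$, the contrapositive of BGI applied (through Remark \ref{hierarchy}) to a $\C W$--geodesic shadowing $\pi_W(\gamma)$ forces $\pi_W(\gamma)$ to enter a bounded neighborhood of $\rho^{V_i}_W$. I define $I_i \subseteq \gamma$ to be a maximal closed subinterval with $\pi_W(I_i) \subset N_{E_0}(\rho^{V_i}_W)$, where $E_0$ is a fixed constant just larger than $E + D_0 + \kappa_0$ so that BGI still applies on the complement. Since $\pi_W(\gamma)$ is an unparametrized quasi-geodesic, each $I_i$ is a single sub-interval; after relabeling the $V_i$ by the order in which the $I_i$ appear along $\gamma$, the $I_i$ may be taken pairwise disjoint, and their complement in $\gamma$ is a union of at most $n+1$ closed subpaths $\gamma_0, \ldots, \gamma_n$.

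The central verification is that each $(\gamma_j^-,\gamma_j^+) \in \PP_T(\s_W^\circ)$. For $V \in \s_W^\circ \setminus \rel_{\check T}(W;x,y)$ this is immediate from Lemma \ref{geodthick}. For $V \in \rel_{\check T}(W;x,y)$, maximality of the $V_i$ produces some $i$ with $V \nest V_i$; consistency applied to the triple $V \nest V_i \propnest W$ (noting $V \notorth W$) gives $\dist_W(\rho^V_W, \rho^{V_i}_W) \le \kappa_0$, so $\pi_W(\gamma_j)$ also stays far from $\rho^V_W$, and a BGI-on-the-subpath argument via Remark \ref{hierarchy} delivers $\dist_V(\gamma_j^-, \gamma_j^+) \le E < T$. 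Hence the $\gamma_j$ are admissible in Definition \ref{trthick}, so $\dist^{T,R}_W(x,y) \ge \sum_j \ignore{\dist_W(\gamma_j^-, \gamma_j^+)}{R}$. To close the argument, the triangle inequality in $\C W$ applied along the concatenation $\gamma_0 \cdot I_1 \cdot \gamma_1 \cdots I_n \cdot \gamma_n$ gives
\[\dist_W(x,y) \le \sum_{j=0}^n \dist_W(\gamma_j^-,\gamma_j^+) + \sum_{i=1}^n \diam(\pi_W(I_i)) + O(E);\]
combining $\diam(\pi_W(I_i)) \le 2E_0 + E$ with the threshold inequality $a \le \ignore{a}{R} + R$, and absorbing the single extra $R$ (from the $(n+1)$-th thick piece) into the $n \ge 1$ coefficient, yields $\ignore{\dist_W(x,y)}{R} \le \dist^{T,R}_W(x,y) + (6E+2R)n$ as claimed.

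The main technical obstacle is this final bookkeeping of constants: BGI along a subpath must be routed through Remark \ref{hierarchy}, which inflates the relevant distances by $D_0$, and the move from $\rho^{V_i}_W$ to a general $\rho^V_W$ via consistency costs $\kappa_0$. Balancing these contributions against the diameter bound $2E_0 + E$ on the bad intervals $I_i$, together with the $(n+1)$ vs. $n$ discrepancy in the number of thick pieces, is what pins down the coefficient $6E + 2R$ per excised domain.
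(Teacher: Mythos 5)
Your approach is essentially the same as the paper's: excise from a hierarchy path a small piece around each $\rho^{V_i}_W$ for $V_i\in\rel^m_{\check T}(W;x,y)$, verify the remaining subpaths are $T$--thick for $\s_W^\circ$ via BGI and consistency, and then combine the triangle inequality with the threshold bookkeeping. Two points deserve attention.

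First, the claim that ``since $\pi_W(\gamma)$ is an unparametrized quasi-geodesic, each $I_i$ is a single sub-interval'' is not correct and hides a genuine gap. A quasi-geodesic in a $\delta$--hyperbolic space can leave and re-enter a fixed bounded neighborhood of $\rho^{V_i}_W$, so the set $\{s : \pi_W(\gamma(s))\in N_{E_0}(\rho^{V_i}_W)\}$ need not be connected; there may be several maximal subintervals on which the projection stays in the ball, and excising only one of them leaves complementary pieces that still approach $\rho^{V_i}_W$, which breaks the BGI step. What is true is that this set has uniformly bounded diameter (after the monotone reparametrization), so the correct move — which is exactly what the paper does with $s_V^-=\inf\{\cdot\}$ and $s_V^+=\sup\{\cdot\}$ — is to excise the convex hull, the interval from first entry to last exit. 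This makes the complement genuinely $E_0$--far from $\rho^{V_i}_W$, and the two excised endpoints still project $E_0$--close to $\rho^{V_i}_W$, which is all you need for the triangle-inequality bookkeeping.

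Second, a smaller point: the paper defines the excision thresholds in terms of proximity to $\rho^U_W$ for all $U\nest V$ (at distance $2E$), so each excised endpoint is within $2E+\kappa_0\le 3E$ of $\rho^V_W$ and the cost per gap is $6E$. Your version excises a ball around $\rho^{V_i}_W$ of radius $E_0>E+D_0+\kappa_0$ and then pays the $\kappa_0$--offset via consistency when moving to an arbitrary $V\nest V_i$; this is a legitimate alternative but the gap cost becomes roughly $2E_0+E$, which need not be $\le 6E$ when $D_0$ and $\kappa_0$ are comparable to $E$. Since downstream uses of this lemma only care about having some uniform linear constant, this is harmless, but as written your argument proves the lemma with a possibly larger constant in place of $6E$, not the statement verbatim.
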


\begin{proof}
If $\rel_{\check T}^{m}(W)=\emptyset$, then $(x,y)\in \PP_T(\s_W^{\circ})$. Thus, both sides of the above inequality are equal.

Now assume that $\rel_{\check T}^{m}(W)\neq \emptyset$ and $d_W(x,y)\ge R$. Then $\C W$ is not $\nest$-minimal so it is hyperbolic. Let $\gamma:I\to \X$ be a $D_0$-hierarchy path realizing $d^{T,R}_W(x,y)$, where $I$ is an interval of $\R$. For any $V\propnest W$, we define 
\[s_V^-:= \inf\{s\in I\mid\exists U\nest V \text{such that} d_W(\gamma(s),\rho^U_W)\leq 2E\},\]
\[s_V^+:=\sup\{s\in I\mid\exists U\nest V \text{such that} d_W(\gamma(s),\rho^U_W)\leq 2E\}.\] 
For any $U\nest V\propnest W$, we know that $d_W(\rho^U_W,\rho^V_W)\leq \kappa_0$ by consistency. Thus, 
\[d_W(\gamma(s_V^-),\rho^V_W))\le 2E+\kappa_0\le 3E,\] 
\[d_W(\gamma(s_V^+),\rho^V_W))\le 2E+\kappa_0\le 3E.\] 
Therefore,
\[d_W(\gamma(s_V^-),\gamma(s_V^+))\leq d_W(\gamma(s_V^-),\rho^V_W))+d_W(\gamma(s_V^+),\rho^V_W))\le 6E.\]

Let $J_0,\dots,J_n$ be the collection of maximal intervals in $I-\bigcup_{V\in \rel_{\check T}^{m}(W)}(s_V^-,s_V^+)$. Note that $n\leq |\rel_{\check T}^{m}(W)|$. Now we are going to prove that $(\gamma(J_i^-),\gamma(J_i^+))\in\PP_T(\s_W^{\circ})$. 

On the one hand, $d_W(\rho_W^U,\gamma(J_i))\ge 2E$ for any $U\in \rel_{\check T}(W)$ by the definition of $J_i$. By the Morse Lemma, 
\[d_W(\rho_W^U,[\gamma(J_i^-),\gamma(J_i^+)])\ge d_W(\rho_W^U,\gamma(J_i))-E\ge E.\]
Therefore, $d_U(\gamma(J_i^-),\gamma(J_i^+))\le E <T$ by BGI. On the other hand, $(x,y)$ is $\check T$-thick for $\s_W^{\circ}-\rel_{\check T}(W)$ by definition. It follows from Lemma \ref{geodthick} that $(\gamma(J_i^-),\gamma(J_i^+))$ is $T$-thick for $\s_W^{\circ}-\rel_{\check T}(W)$. In sum, $(\gamma(J_i^-),\gamma(J_i^+))\in\PP_T(\s_W^{\circ})$. 

Finally, we estimate that
\begin{align*}
d_W(x,y) & \leq\sum_{i=0}^nd_W(\gamma(J_i^+),\gamma(J_i^-))+6E|\rel_{\check T}^{m}(W)|\\
& \leq d^{T,R}_W(x,y)+R(n+1)+6E|\rel_{\check T}^{m}(W)|\\
& \leq d^{T,R}_W(x,y)+(6E+2R)|\rel_{\check T}^{m}(W)|.
\end{align*}
\end{proof}

Let $S$ denote the unique maximal domain in $\s$. Recall that the level $\ell(S)$ of $S$ is equal to the complexity of $(\X,\s)$. 

\begin{thm}\label{thm:thick_distance}
Fix the constants $T,R$ with $\check T\ge R>100E$.
Let $x,y \in \X$.
Then, for each $n$
$$\sum_{\ell(W) \le n} \ignore{d_W(x,y)}{R} \le \sum_{\ell(W) = n} d^{T,R}_W(x, y) + 7\sum_{\ell(W)<n} \ignore{d_W(x,y)}{R}.$$
\end{thm}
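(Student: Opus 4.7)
The plan is to split the left-hand side according to whether $\ell(W)=n$ or $\ell(W)<n$, apply Lemma \ref{thick_distance} to the top-level terms, and absorb the resulting error into the lower-level sum by a double count built on Lemma \ref{at_most_two}. Concretely, split
\[
\sum_{\ell(W)\le n}\ignore{\dist_W(x,y)}{R} = \sum_{\ell(W)=n}\ignore{\dist_W(x,y)}{R} + \sum_{\ell(W)<n}\ignore{\dist_W(x,y)}{R},
\]
and apply Lemma \ref{thick_distance} term-by-term to the first piece to obtain
\[
\sum_{\ell(W)=n}\ignore{\dist_W(x,y)}{R} \le \sum_{\ell(W)=n}\dist^{T,R}_W(x,y) + (6E+2R)\sum_{\ell(W)=n}|\rel^m_{\check T}(W;x,y)|.
\]
The target coefficient $7$ on the right-hand side should materialize as $1+6$, where the $1$ accounts for the lower-level term in the split and the $6$ will absorb the error $(6E+2R)\sum_{\ell(W)=n}|\rel^m_{\check T}(W;x,y)|$.

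For the error estimate I would switch the order of summation:
\[
\sum_{\ell(W)=n}|\rel^m_{\check T}(W;x,y)| = \sum_{\substack{U\in\rel_{\check T}(x,y)\\ \ell(U)<n}}\bigl|\{W:\ell(W)=n,\ U\in\rel^m_{\check T}(W;x,y)\}\bigr|.
\]
By Lemma \ref{at_most_two} applied with threshold $\check T$, for each fixed $U$ the inner set has size at most $2$. Since $\check T\ge R$, every $U\in\rel_{\check T}(x,y)$ satisfies $\ignore{\dist_U(x,y)}{R}=\dist_U(x,y)>\check T$, so
\[
|\{U\in\rel_{\check T}(x,y):\ell(U)<n\}|\ \le\ \frac{1}{\check T}\sum_{\ell(U)<n}\ignore{\dist_U(x,y)}{R}.
\]
A quick constant chase using $R>100E$ gives $(6E+2R)\cdot 2/\check T\le (12E+4R)/R = 12E/R+4<6$, which yields $(6E+2R)\sum_{\ell(W)=n}|\rel^m_{\check T}(W;x,y)|\le 6\sum_{\ell(W)<n}\ignore{\dist_W(x,y)}{R}$. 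Substituting back produces the theorem.

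The main obstacle is justifying the ``at most $2$'' bound in the double count. Lemma \ref{at_most_two} as stated concerns competing ambient domains $V_i$ that lie in $\rel_{\check T}$, whereas in the present application the enclosing $W$ at level $n$ need not themselves be $\check T$-relevant. The same-level hypothesis together with $U\propnest W_i$ and the nesting-orthogonality compatibility axiom already forces any two such $W_i$ to be pairwise transverse, so the only delicate step is the Behrstock/total-order contradiction for three such $W_i$: that contradiction requires a large value of $\dist_{W_2}(\rho^{W_1}_{W_2},\rho^{W_3}_{W_2})$, which should be extractable from $\dist_U(x,y)>\check T$ via consistency of the tuples at $x,y$ together with BGI applied to $U\propnest W_2$, rather than from $\check T$-relevance of $W_2$ itself. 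Verifying this extension is the step I expect to require the most care.
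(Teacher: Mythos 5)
Your proof is correct and follows essentially the same route as the paper: split the left side by level, apply Lemma \ref{thick_distance} to the level-$n$ piece, swap the order of summation, and invoke Lemma \ref{at_most_two} for the factor $2$ in the double count. Your constant chase $2(6E+2R)/\check T<6$ is exactly the paper's $7=1+6$ split; the only cosmetic difference is that the paper folds the conversion $|\rel^{m}_{\check T}(W)|\le\sum_{V}\ignore{\dist_V(x,y)}{\check T}/\check T$ into the per-$W$ chain of inequalities before summing, rather than aggregating counts afterwards.

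Your closing concern about the hypotheses of Lemma \ref{at_most_two} is legitimate --- the paper applies the lemma without comment, and as stated it requires the competing ambient domains to lie in $\rel_{\check T}(x,y)$, whereas a $W$ appearing in the level-$n$ sum need only satisfy $\dist_W(x,y)\ge R$, and $R$ may be strictly less than $\check T$. However, the repair is lighter than what you sketch, and the route you propose via BGI is aimed in the wrong direction: in the proof of Lemma \ref{at_most_two}, consistency through the shared nested domain $U$ is precisely what forces $\dist_{W_2}(\rho^{W_1}_{W_2},\rho^{W_3}_{W_2})\le 2\kappa_0$ to be \emph{small}, while the \emph{large} lower bound $\dist_{W_2}(x,y)-2E$ comes from the total order together with the relevance of $W_2$ itself; BGI applied to $U\propnest W_2$ cannot supply the large side. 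The correct fix is more mundane: discard from the double count any $W$ with $\dist_W(x,y)<R$, since such a $W$ contributes $\ignore{\dist_W(x,y)}{R}=0$ on the left and you never need Lemma \ref{thick_distance} for it. Every surviving $W$ is $R$-relevant, so the competing $W_1,W_2,W_3$ all lie in $\rel_R(x,y)$. The total order of \cite[Prop.~2.8]{BHS19} is available on pairwise transverse $R$-relevant domains because $R>100E$, and the resulting contradiction needs only $\dist_{W_2}(x,y)-2E>2\kappa_0$, which is guaranteed by $\dist_{W_2}(x,y)\ge R>100E$. Thus Lemma \ref{at_most_two} and its proof carry over verbatim with relevance threshold $R$ for the ambient $W_i$ (keeping $\check T$ for the nested $V$), and no additional BGI or consistency argument is needed.
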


Note that each sum has finitely many terms since there are only finitely many $W$ such that $d_W(x, y) \ge R$ for given $x,y$ by the distance formula (Theorem \ref{thm:distance_formula}). 

\begin{proof}
If $\ell(W) = n$ then by Lemma \ref{thick_distance},
\begin{align*}
\ignore{d_W(x,y)}{R} &\le d^{T,R}_W(x, y) + (6E+2R)|\rel^{m}_{\check T}(W)| \\
&\le d^{T,R}_W(x, y) + (6E+2R)\sum_{V \in \rel^{m}_{\check T}(W)} \frac{\ignore{d_V(x, y)}{\check T}}{\check T} \\
&\le d^{T,R}_W(x, y) + 3\sum_{V \in \rel^{m}_{\check T}(W)} \ignore{d_V(x, y)}{\check T} \\
&\le d^{T,R}_W(x, y) + 3\sum_{V \in \rel^{m}_{\check T}(W)} \ignore{d_V(x, y)}{R}.
\end{align*}
By Lemma \ref{at_most_two}, any $V$ appears in at most two $\rel^{m}_{\check T}(W)$. Therefore, if we sum up the left side over all $W$ with $\ell(W) = n$, we have
$$\sum_{\ell(W) =n} \ignore{d_W(x,y)}{R} \le \sum_{\ell(W)=n} d^{T,R}_W(x, y) + 6\sum_{\ell(W)<n} \ignore{d_W(x,y)}{R}.$$
Adding $\sum_{\ell(W)<n} \ignore{d_W(x,y)}{R}$ to both sides gives the desired inequality.
\end{proof}

\begin{cor}\label{cor.thick_distance}
Fix the constants $T,R$ with $\check T\ge R>100E$. Let $x,y \in \X$. 
Then
$$\frac{1}{D_0}\sum_{W\in\s} d_W^{T,R}(x,y)-D_0
 \le 
\sum_{W\in\s} \ignore{d_W(x,y)}{R} 
\le 
7^{\ell(S)-1} \sum_{W\in\s} d_W^{T,R}(x,y).$$
\end{cor}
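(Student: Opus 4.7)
My plan is to prove the two inequalities separately. The right-hand inequality will be a direct iteration of Theorem \ref{thm:thick_distance}, while the left-hand inequality requires controlling each $\dist_W^{T,R}(x,y)$ individually via the quasi-geodesic property of $\pi_W(\gamma)$.

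For the upper bound, I would introduce the level-indexed sums $A_n := \sum_{\ell(W) \le n} \ignore{\dist_W(x,y)}{R}$ and $B_n := \sum_{\ell(W) = n} \dist_W^{T,R}(x,y)$. Theorem \ref{thm:thick_distance} then reads as the recursion $A_n \le B_n + 7 A_{n-1}$, with base case $A_0 = 0$. A straightforward induction on $n$ yields
$$A_n \le \sum_{k=1}^n 7^{n-k} B_k \le 7^{n-1} \sum_{k=1}^n B_k.$$
Setting $n = \ell(S)$ recovers the right inequality, since $A_{\ell(S)} = \sum_W \ignore{\dist_W(x,y)}{R}$ and $\sum_{k=1}^{\ell(S)} B_k = \sum_W \dist_W^{T,R}(x,y)$.

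For the lower bound, I would fix $W$ and a $D_0$--hierarchy path $\gamma:[0,L]\to\X$ from $x$ to $y$ with chosen subsegments $\gamma_1,\dots,\gamma_n$, discarding those with $\dist_W(\gamma_i^-,\gamma_i^+)<R$ since they contribute nothing to $\dist_W^{T,R}(x,y)$. Since $\pi_W\circ\gamma$ is an unparametrized $(D_0,D_0)$--quasi-geodesic, there is a strictly increasing reparametrization $f$ making $\pi_W\circ\gamma\circ f$ a parametrized quasi-geodesic on $[0,L]$. The disjoint parameter intervals of the $\gamma_i$ correspond under $f^{-1}$ to disjoint intervals in the reparametrized domain, whose total length is at most $L$; the quasi-geodesic lower bound then gives $L \le D_0\dist_W(x,y)+D_0^2$. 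Applying the quasi-geodesic upper bound to each subsegment and summing yields
$$\sum_i \dist_W(\gamma_i^-,\gamma_i^+) \le D_0^2\dist_W(x,y)+D_0^3+n D_0.$$
The threshold constraint forces $nR \le \sum_i \dist_W(\gamma_i^-,\gamma_i^+)$, and solving these together (using that $R>100E$ is large relative to $D_0$) gives a bound $\dist_W^{T,R}(x,y)\le C\dist_W(x,y)+C$ for an explicit constant $C$ independent of $W$.

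To conclude, I sum over $W$ and note that $\dist_W^{T,R}(x,y)>0$ forces $\dist_W(x,y)\ge R-2D_0$ by Lemma \ref{geodthick}, so the number of non-vanishing terms is bounded by $\sum_W \dist_W^{T,R}(x,y)/R$. This lets me absorb the per-$W$ additive constants into a single uniform additive term. The main obstacle is aligning thresholds: $\dist_W^{T,R}$ becomes positive at $\dist_W(x,y)\ge R-2D_0$, whereas $\ignore{\dist_W(x,y)}{R}$ requires the full $R$. I would resolve this by observing that the boundary terms with $\dist_W(x,y)\in[R-2D_0,R)$ each contribute at most $O(R)$ on the left and are uniformly bounded in number by the same counting argument, so their total contribution is absorbable into the additive $-D_0$ on the left-hand side of the stated inequality.
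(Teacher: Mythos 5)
Your proof of the upper bound is essentially the paper's argument: you set up the recursion $A_n \le B_n + 7A_{n-1}$ from Theorem~\ref{thm:thick_distance} and unroll it, which is exactly what the paper does by inductively applying that theorem with base case $n=\ell(S)$. That part is fine.

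The lower bound is where you diverge, and where the gap lies. The paper's own argument is one line: it invokes the observation made immediately after Definition~\ref{trthick} that $\dist_W^{T,R}(x,y)\le \ignore{\dist_W(x,y)}{R}$ holds termwise for every $W$, and then the inequality $\frac{1}{D_0}\sum_W \dist_W^{T,R}(x,y)-D_0 \le \sum_W \ignore{\dist_W(x,y)}{R}$ is immediate (the $\frac{1}{D_0}$ and $-D_0$ are slack). You instead re-derive a weaker termwise bound of the form $\dist_W^{T,R}(x,y)\le C\dist_W(x,y)+C$ via a reparametrization of the unparametrized quasi-geodesic $\pi_W\circ\gamma$; that derivation itself is correct (disjoint parameter intervals, total length bounded by the reparametrized domain $L$, and $L\le D_0\dist_W(x,y)+D_0^2$). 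The problem comes in the absorption step. You bound the number of $W$ with $\dist_W^{T,R}(x,y)>0$ by $\sum_W\dist_W^{T,R}(x,y)/R$ and then assert that the boundary terms with $\dist_W(x,y)\in[R-2D_0,R)$ are ``uniformly bounded in number'' and hence ``absorbable into the additive $-D_0$.'' That is not a uniform bound: the count $\sum_W\dist_W^{T,R}(x,y)/R$ depends on $x$ and $y$ and is unbounded as $\dist_\X(x,y)\to\infty$. Multiplying the per-term contribution $O(R)$ by that count produces a quantity proportional to $\sum_W\dist_W^{T,R}(x,y)$ itself, i.e.\ a multiplicative error, not a constant additive one. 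This error can be rearranged away only if its coefficient is strictly less than $1$, which would require choosing $R$ large relative to $D_0$ and reworking the constants in the stated inequality -- you would not land on the precise form $\frac{1}{D_0}(\cdot)-D_0$. In short, the cleanest route is the paper's: cite the termwise monotonicity $\dist_W^{T,R}(x,y)\le \ignore{\dist_W(x,y)}{R}$ directly rather than re-proving a lossy version of it and then trying to claw back the loss.
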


\begin{proof}
The first inequality is trivial since 
$\frac{1}{D_0}d_W^{T,R}(x,y)-D_0 \le \ignore{d_W(x,y)}{R} $ for all $W$.
By inductively applying Theorem \ref{thm:thick_distance}, with base case $n=\ell(S)$, we have
$$\sum_{W\in\s} \ignore{d_W(x,y)}{R} \le 7^{\ell(S) - n}\left(\sum_{n \le \ell(W) \le \ell(S)} d^{T,R}_W(x,y) + 7\sum_{\ell(W)<n} \ignore{d_W(x,y)}{R}\right).$$
When $n=1$, the last term on the right is zero, and the result follows.
\end{proof}

Combining the distance formula (Theorem \ref{thm:distance_formula}) with Corollary \ref{cor.thick_distance}, we obtain our thick distance formula.

\begin{thm}[Thick distance formula]\label{thm.thick.dist}
There exists $R_0$ such that for all $T,R$ with $\check T\ge R>R_0$, there exists a constant $L>0$ such that for all $x,y \in \X$,
\[d_{\X}(x,y)\asymp_{(L,L)} \sum_{W\in\s} d_W^{T,R}(x,y).\]
\end{thm}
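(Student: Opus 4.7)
The plan is to combine the two tools cited just above the statement: the Masur--Minsky style distance formula (Theorem \ref{thm:distance_formula}) and Corollary \ref{cor.thick_distance}. The first gives a bilipschitz comparison between $\dist_{\X}(x,y)$ and the truncated sum $\sum_{W \in \s} \ignore{\dist_W(x,y)}{R}$ once the threshold is large enough, and the second gives a bilipschitz comparison between that truncated sum and $\sum_{W \in \s} \dist_W^{T,R}(x,y)$ once $T,R$ are large. Chaining these two comparisons yields the thick distance formula.

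More concretely, I would set $R_0 := \max(s_0, 100E)$, where $s_0$ is the threshold from Theorem \ref{thm:distance_formula} and $100E$ is the threshold required by Corollary \ref{cor.thick_distance}. Then for any $T,R$ satisfying $\check T \ge R > R_0$, Theorem \ref{thm:distance_formula} supplies a constant $C = C(R) > 0$ with
\[
\frac{1}{C}\dist_{\X}(x,y) - C \;\le\; \sum_{W \in \s} \ignore{\dist_W(x,y)}{R} \;\le\; C\,\dist_{\X}(x,y) + C
\]
for all $x,y \in \X$, and Corollary \ref{cor.thick_distance} supplies
\[
\frac{1}{D_0}\sum_{W \in \s} \dist_W^{T,R}(x,y) - D_0 \;\le\; \sum_{W \in \s} \ignore{\dist_W(x,y)}{R} \;\le\; 7^{\ell(S)-1} \sum_{W \in \s} \dist_W^{T,R}(x,y).
\]
The upper bound in Theorem \ref{thm:distance_formula} combined with the lower bound in Corollary \ref{cor.thick_distance} gives an upper bound on $\dist_{\X}(x,y)$ in terms of $\sum_W \dist_W^{T,R}(x,y)$, while the lower bound in Theorem \ref{thm:distance_formula} combined with the upper bound in Corollary \ref{cor.thick_distance} gives the matching lower bound. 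Setting $L := \max\{C D_0,\; C\cdot 7^{\ell(S)-1},\; C D_0 + C\}$ (or any sufficiently large constant absorbing the additive terms) produces the desired asymptotic relation with a single constant $L$ depending only on $T,R$.

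There is no real obstacle: the work has already been done in Sections \ref{sec_background} and in the lemmas above, and the final step is a bookkeeping exercise in composing two quasi-inequalities. The only thing to verify explicitly is that $R_0$ can be chosen independently of the pair $(T,R)$ (it only needs to dominate the threshold in the HHS distance formula and the $100E$ bound needed for the corollary), and that the constant $L$ may depend on $T$ and $R$ but not on $x,y$, which is exactly the content of the statement.
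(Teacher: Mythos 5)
Your proposal matches the paper's proof, which is literally the one line ``combining the distance formula (Theorem \ref{thm:distance_formula}) with Corollary \ref{cor.thick_distance}, we obtain our thick distance formula''; you have simply filled in the chaining of inequalities and the choice $R_0 = \max(s_0, 100E)$. One small slip: the combination you describe as giving an upper bound on $\dist_{\X}$ (the upper bound of Theorem \ref{thm:distance_formula} with the lower bound of Corollary \ref{cor.thick_distance}) in fact yields the lower bound direction, and vice versa, but this is a labeling issue and the substance is correct.
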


\section{Estimation of thick distance via quasi-axes}
\label{sec_appoximation}

Our proof in this section is inspired by \cite{NY23a}. The main technique in the proof of \cite[Lemma 5.5]{NY23a} that is different from \cite{BBF21} is the use of the Extension Lemma \cite[Lemma 2.14]{Yan19}. Lemma \ref{ExtensionLem} below is a trimmed version for acylindrical actions on hyperbolic spaces. 
In the original statement of Lemma \ref{ExtensionLem} in \cite{NY23a}, the group action is required to be cobounded, but it is easy to see from the proof that this condition can be removed. 

\begin{lem}[Extension Lemma]\label{ExtensionLem}\cite[Lemma~4.13]{NY23a}
Let $H$ be a group acting non-elementarily and acylindrically on a $\delta$-hyperbolic space $Y$. Fix a base point $o\in Y$. There exists a set $F\subset H$ of three loxodromic elements and constants $\lambda\ge 1, c\ge 0$  with the following property. 

For any $h\in H$ there exists $f\in F$ such that $hf$ is a loxodromic element and the bi-infinite path 
$$\gamma_h=\bigcup_{i\in \mathbb Z}(hf)^{i}([o, ho][ho, hfo])$$
is a $(\lambda, c)$-quasi-geodesic.
\end{lem}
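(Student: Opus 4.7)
The plan is to use non-elementarity to produce three independent loxodromic elements with pairwise disjoint endpoints at infinity, and then to run a pigeonhole argument over these three candidates.

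First, by Osin's classification of acylindrical actions on hyperbolic spaces, a non-elementary acylindrical action admits many pairwise independent loxodromic elements. I would pick three of them, $f_1, f_2, f_3 \in H$, so that the six fixed points $\xi_i^{\pm} \in \partial Y$ are pairwise distinct. Set $F = \{f_1, f_2, f_3\}$ and let $D_F = \max_i \dist(o, f_i^{\pm 1} o)$.

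Next, for any $h \in H$ and any $f \in F$, the path $\gamma_h$ is a bi-infinite concatenation of translates of the two geodesic segments $[o, ho]$ and $[ho, hfo]$ under the cyclic group $\langle hf\rangle$. By $\langle hf\rangle$-equivariance, its corners fall into only two orbits: one at each translate of $o$ and one at each translate of $ho$. The standard local-to-global criterion for piecewise geodesics in $\delta$-hyperbolic spaces (uniformly long subsegments with uniformly small Gromov products at the joins form uniform quasi-geodesics) then reduces the proof to exhibiting, for at least one $f = f_i \in F$, a uniform bound $B = B(\delta, D_F)$ with
\begin{equation*}
\langle o,\, hfo\rangle_{ho} \le B \quad \text{and} \quad \langle f^{-1}o,\, ho\rangle_o \le B.
\end{equation*}

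The pigeonhole step is the heart of the argument. Since the six endpoints $\xi_i^{\pm}$ are pairwise distinct in $\partial Y$ and each $f_i$ acts loxodromically, the orbit points $f_i^{\pm 1} o$ lie in three ``independent'' coarse directions from $o$. The claim I would establish is that for any single ``bad direction'' from a given basepoint, at most one of the three $f_i$ can have $f_i o$ (respectively $f_i^{-1} o$) aligned with that direction up to a uniform error. Applied to the two Gromov product inequalities above (the first encoding an alignment obstruction at $ho$, the second at $o$), this means at most one $f_i$ violates each inequality; since there are three candidates and only two conditions, at least one $f \in F$ satisfies both. Once $\gamma_h$ is a $(\lambda,c)$-quasi-geodesic, loxodromicity of $hf$ is automatic, as $\gamma_h$ is $hf$-invariant with translation length coarsely $\dist(o, hfo)$.

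The main obstacle will be ensuring the bound $B$ and hence $(\lambda, c)$ are uniform in $h$. The Gromov product estimates naively depend on the basepoint, which shifts with $h$; here one must invoke acylindricity to control how much the angular separation between the six points $\xi_i^{\pm}$ can degenerate as one moves the basepoint along an $H$-orbit, which is precisely the kind of uniformity acylindrical actions provide via the WPD-type finiteness in the definition. A related subtlety is the degenerate regime where $\dist(o, ho)$ is small: there the segment $[o, ho]$ is too short for the corner estimates to bite, but since only finitely many $h$ lie in any bounded neighborhood of the stabilizer of $o$ up to the orbit map, such cases can be absorbed into the additive constant $c$.
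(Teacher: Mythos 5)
The paper cites this lemma from \cite{NY23} without reproving it, so there is no in-text proof to compare against; what follows is an assessment of your argument on its own terms. Your overall route — pick three pairwise independent loxodromics, reduce the quasi-geodesicity of $\gamma_h$ to uniformly small Gromov products at the two $\langle hf\rangle$-orbits of corners, pigeonhole over the three candidates, then read off loxodromicity of $hf$ from $hf$-invariance of $\gamma_h$ and positivity of the period length — is the standard one and matches the proof in \cite{NY23} (and the related extension lemmas in \cite{BBF21}). Your identification of the two corner conditions and the count (at most one $f$ can fail each, three candidates, so at least one passes both) is correct.

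However, three points need repair. First, distinct endpoints $\xi_i^{\pm}\in\partial Y$ do \emph{not} by themselves imply that $f_i^{\pm 1}o$ lie in three independent coarse directions from $o$: the $f_i$ may have tiny translation length, or their quasi-axes may pass far from $o$, so all six orbit points can cluster near $o$ and the pigeonhole does not close. You must first arrange (by taking sufficiently high powers $f_i^N$, and choosing the initial $f_i$ with axes passing near $o$) that the six points $f_i^{\pm N}o$ are far from $o$ and have pairwise Gromov products at $o$ bounded well below the fellow-travel threshold; only then is it true that a single ray from $o$ can align with at most one of them. Second, the ``main obstacle'' paragraph is built on a misreading. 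After conjugating by $h^{-1}$, both corner conditions sit at the \emph{fixed} basepoint $o$: they become $\langle h^{-1}o, fo\rangle_o\le B$ and $\langle ho, f^{-1}o\rangle_o\le B$. There is no shifting basepoint and hence no degeneration of the angular separation of the $\xi_i^{\pm}$ to control; acylindricity is used to produce independent loxodromics in the first place, not to shore up the pigeonhole. Third, the degenerate regime $\dist(o,ho)$ small cannot be dismissed by claiming only finitely many $h$ fall there: an acylindrical action need not be metrically proper, so $\{h : \dist(o,ho)\le R\}$ can be infinite. The correct fix is direct: when $\dist(o,ho)$ is small, the corner Gromov product at $ho$ is automatically at most $\dist(o,ho)$, the short segments can be merged into their neighbors, and the remaining long segments (of length close to $\dist(o,fo)$, which is large after taking powers) with small corners give the local-to-global conclusion.
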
 

Let $(\X,\s)$ be a relative HHS with a coarse constant $E$, and let $G$ be a relative HHG by virtue of its action on $\X$. 
Corresponding to Definition \ref{type}, we say that a domain $U\in \s$ has
    \begin{enumerate}
     \item \emph{hyperbolicity} if $\C U$ is hyperbolic.
     \item \emph{acylindrical image} if $G_U$ acts on $\C U$ acylindrically.
     \item \emph{cobounded nested region} if $G_U$ acts on $\F_U$ coboundedly.
     \item \emph{separable quasi-axes} if for any element $g\in\stab_G(U)$ that acts loxodromically on $\C U$, the elementary closure $EC(g)$ is \emph{separable} in $G$, i.e. $EC(g)$ equals the intersection of all finite-index subgroups of $G$ that contain $EC(g)$. 
    \end{enumerate}

Till the end of this section, let $V\in \s$ be an unbounded domain that has hyperbolicity, cobounded nested region and acylindrical image. 
The next lemma could be compared with \cite[Theorem 4.19]{BBF21} for mapping class groups. 

\begin{lem}[Extension of thick segments]\label{Ext.Thick}
    There exist constants $\lambda\ge 1,c\ge 0,B\ge 0$ such that the following holds. For any $T,R>0$, there exists a $G_V$-finite collection $\A_V=\A_V^{T,R}$ of $(\lambda,c)$-quasi-axes in $\C V$ such that for any pair of points $(x,y)\in\PP_{\max\{\hat T, R\}}(\s_V)$, there exists $\gamma\in \A_V$ such that $[\pi_V(x),\pi_V(y)]\subset \neb_B(\gamma)$.
\end{lem}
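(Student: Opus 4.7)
The plan is to reduce the pair $(x,y)$ to the standard setup of the Extension Lemma (Lemma \ref{ExtensionLem}) via the cobounded action of $G_V$ on $\C V$, pull out a quasi-axis passing close to the relevant short segment, and then control the resulting family up to $G_V$-orbits using acylindricity. Set $T^\ast = \max\{\hat T, R\}$. Since $V \in \s_V$ and $(x,y) \in \PP_{T^\ast}(\s_V)$, we have $\dist_V(x,y) \le T^\ast$, so the geodesic $[\pi_V(x), \pi_V(y)]$ in the hyperbolic space $\C V$ has length at most $T^\ast$.

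First I will fix a base point $o \in \C V$. The cobounded action $G_V \curvearrowright \F_V$, combined with the $G_V$-equivariant coarsely surjective map $\pi_V : \F_V \to \C V$, yields a constant $K > 0$ so that the orbit $G_V \cdot o$ is $K$-dense in $\C V$. Applying the Extension Lemma to the acylindrical action $G_V \curvearrowright \C V$ then produces a finite set $F \subset G_V$ of three loxodromic elements and uniform constants $\lambda \ge 1$, $c \ge 0$.

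Next, for a given pair $(x,y)$ with $p = \pi_V(x)$ and $q = \pi_V(y)$, I pick $g_1 \in G_V$ with $d(g_1 o, p) \le K$ and $h \in G_V$ with $d(g_1 h o, q) \le K$; the triangle inequality yields $d(o, ho) \le T^\ast + 2K$. The Extension Lemma applied to $h$ yields $f \in F$ such that $\gamma_h = \bigcup_{i \in \mathbb Z} (hf)^i([o, ho][ho, hfo])$ is a $(\lambda, c)$-quasi-axis for $hf$, and in particular contains $[o, ho]$. Then $g_1 \gamma_h$ is a $(\lambda, c)$-quasi-axis passing within $K$ of both $p$ and $q$, and the Morse Lemma in the $\delta$-hyperbolic $\C V$ produces a uniform constant $B$, depending only on $K, \lambda, c, \delta$, such that $[p, q] \subset \neb_B(g_1 \gamma_h)$.

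Finally, I will define $\A_V = \A_V^{T,R}$ to be the $G_V$-closure of the family of all axes $\gamma_h$ produced this way. The main obstacle is showing that $\A_V$ has only finitely many $G_V$-orbits. The key observation is that every generating $\gamma_h$ intersects the bounded ball $B(o, T^\ast + 2K)$, and the associated loxodromic element $hf$ has translation length bounded by $T^\ast + 2K + \max_{f \in F} d(o, fo)$. Since the action is acylindrical, the stabilizer of any such quasi-axis is virtually cyclic, and a standard application of acylindricity to pairs of far-apart points on an axis shows that the set of $(\lambda, c)$-quasi-axes meeting a fixed bounded ball with uniformly bounded translation length splits into only finitely many $G_V$-orbits. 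This yields the required $G_V$-finiteness of $\A_V$.
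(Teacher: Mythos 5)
Your overall strategy — apply the Extension Lemma to the acylindrical action $G_V \curvearrowright \C V$, translate the short geodesic $[\pi_V(x),\pi_V(y)]$ near the base point, and invoke the Morse Lemma — matches the paper at a high level. But there is a genuine gap in how you establish $G_V$-finiteness of $\A_V$, and it comes from where you choose to do the bookkeeping.

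You only use the hypothesis $(x,y) \in \PP_{\max\{\hat T,R\}}(\s_V)$ to extract the bound $\dist_V(x,y) \le T^\ast$, i.e.\ you use thickness for the single domain $V$ and throw away the information for all $U \propnest V$. You then argue in $\C V$, picking $h \in G_V$ with $d(o,ho)\le T^\ast + 2K$. But $\C V$ is not proper and the $G_V$-action on it is not proper (the point stabilizers can be infinite; acylindricity only controls stabilizers of \emph{pairs} of far-apart points), so the set of such $h$ can be infinite, and nothing forces the resulting family $\{\gamma_h\}$ to be $G_V$-finite. Your final step asserts that acylindricity implies finitely many $G_V$-orbits of $(\lambda,c)$-quasi-axes with bounded translation length passing through a fixed ball; this is not a standard lemma, you give no argument for it, and I do not see how to prove it without substantially more work — this is exactly the point where the construction needs to produce a \emph{finite} generating set, and ``a standard application of acylindricity'' is not a substitute.

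The paper's proof avoids this entirely by doing the bookkeeping in $\F_V$, not $\C V$. Because the pair is thick for the whole family $\s_V$, the distance formula in the relative HHS $(\F_V,\s_V)$ bounds $\dist_{\F_V}$ of thick pairs by some $r$. Now $\F_V$ is \emph{proper} (inherited from $\X$), so the ball $\neb_{r+\epsilon}(o)\subset\F_V$ is covered by finitely many $G_V$-translates of $\neb_\epsilon(o)$, producing a genuinely finite set $S\subset G_V$. The Extension Lemma is then applied only to the $s\in S$, and $\A_V$ is defined as the $G_V$-translates of these finitely many quasi-axes, so $G_V$-finiteness holds by construction with no extra argument. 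In short: the thickness hypothesis for all of $\s_V$ plus the distance formula is what buys you a bounded region in the \emph{proper} space $\F_V$, and that is the engine driving finiteness — your argument drops precisely this information.
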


\begin{proof}
    Fix a base point $o\in \F_V$ and project it to a base point in $\C V$. If the action $G_V\curvearrowright \C V$ is elementary, $\C V$ itself is a quasi-axis of a loxodromic element, and thus satisfies the requirement. Now assume that the action $G_V\curvearrowright \C V$ is non-elementary. Lemma \ref{ExtensionLem} provides a finite set $F\subset G_V$ and constants $\lambda\ge 1, c\ge 0$. Recall that $(\F_V,\s_V)$ is a relative HHS. $\F_V$ is proper because $\X$ is proper. 
    
    Since $G_V$ acts coboundedly on $\F_V$, there exists $\epsilon>0$ such that $\F_V$ is covered by the $G_V$-translates of any $\epsilon$-ball. Let $T'=\max\{\hat T, R\}$. 
    By Theorem \ref{thm:distance_formula}, there exists $r>0$, depending only on $E,T',\epsilon$, such that the distance between any pair of points in $\F_V$ that is $(T'+\epsilon)$-thick for $\s_V$ is bounded above by $r$. Fix any base point $o\in \F_V$. 
    Since $\F_V$ is proper, there exists a finite subset $S\subset G_V$ such that $\neb_{r+\epsilon}(o)$ is covered by $\bigcup_{s\in S}s\cdot \neb_{\epsilon}(o)$. 

    Lemma \ref{ExtensionLem} tells us that for each $s\in S$, there exists $f\in F$ such that $sf$ is a loxodromic element acting on $\C V$. Let $\A_V=\A_V^{T,R}$ be the collection of $G_V$-translates of the $(\lambda,c)$-quasi-axes provided by Lemma \ref{ExtensionLem} for all loxodromic elements of the form $sf$. 

    Now we verify that $\A_V$ meets our requirements. 
    Let $(x,y)\in\PP_{T'}(\s_V)$. We can choose $g\in G_V$ such that $d_{\F_V}(x,go)<\epsilon$. Then $d_{\F_V}(o, g^{-1}y)\le d_{\F_V}(x,y)+d_{\F_V}(x,go)<r+\epsilon$. 
    By our choice of $S$, there exists $s\in S$ such that $d_{\F_V}(g^{-1}y,so)<\epsilon$. Thus, we have
    \begin{align*}
        d_V(x,go)&<E\epsilon+E,\\
        d_V(y,gso)&<E\epsilon+E,
    \end{align*}
    because $\pi_V$ is $E$-coarsely Lipschitz. 
    Since $\C V$ is $E$-hyperbolic, we can find $B>0$ by fellow-traveller property such that $[\pi_V(x),\pi_V(y)]\subset \neb_{B}([g\cdot \pi_V(o),gs\cdot \pi_V(o)])$. By construction, $[g\cdot \pi_V(o),gs\cdot \pi_V(o)]$ is contained in some $\gamma\in \A_V$ so we are done.
\end{proof}

\begin{notation}
    Assume that $\gamma$ is a quasi-geodesic in a hyperbolic space $Y$. We write $\pi_{\gamma}:Y\to \gamma$ to mean the closest point projection. For $x,y\in Y$, we write $d_{\gamma}(x,y)$ to mean $\diam (\pi_{\gamma}(x)\cup\pi_{\gamma}(y))$. 
\end{notation}

\begin{notation}
    Assume that $\gamma$ is a quasi-geodesic in $\C V$. We write $\pi^\X_{\gamma}$ to mean $\pi_{\gamma} \circ \pi_V$. For $x,y\in \X$, we write $d^{\X}_{\gamma}(x,y)$ to mean $\diam (\pi^{\X}_{\gamma}(x)\cup \pi^{\X}_{\gamma}(y))$.
\end{notation}

The following lemma is a well-known corollary of the Morse Lemma for quasi-geodesics in $\delta$-hyperbolic spaces so we omit the proof. 

\begin{lem}\label{ProjBd.hyp}
    Let $\gamma$ and $\alpha$ be two $(\lambda, c)$-quasi-geodesics in a $\delta$-hyperbolic space. Then for any $B>0$, there exists a constant $C=C(\lambda, c, B, \delta)>0$ such that 
    \begin{align*}
    d_\gamma(\alpha^-,\alpha^+)\ge \diam(\alpha\cap \neb_B(\gamma))- C.
    \end{align*}
\end{lem}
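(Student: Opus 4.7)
The plan is to reduce everything to standard hyperbolic geometry: Morse stability of quasi-geodesics, the fact that closest-point projection onto a quasi-geodesic in a hyperbolic space is coarsely Lipschitz, and the closely related coarse monotonicity of such a projection along another quasi-geodesic.

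First I would pick witnesses of the diameter. Set $d := \diam(\alpha \cap \neb_B(\gamma))$. If $d$ is bounded by some constant depending on $(\lambda,c,B,\delta)$ there is nothing to prove; otherwise choose $p,q \in \alpha \cap \neb_B(\gamma)$ with $\dist(p,q) \ge d-1$, and relabel so that $p$ precedes $q$ along the parametrization of $\alpha$. Let $p' := \pi_\gamma(p)$ and $q' := \pi_\gamma(q)$, so $\dist(p,p'), \dist(q,q') \le B + O(1)$.

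Next I would compare $\dist(p',q')$ with $\dist(p,q)$. The subpath $\alpha|_{[p,q]}$ is itself a $(\lambda,c)$--quasi-geodesic whose two endpoints sit in $\neb_B(\gamma)$. Morse stability (applied once to $\alpha|_{[p,q]}$ and once to the subarc of $\gamma$ between $p'$ and $q'$) together with the triangle inequality give a bound of the form $\dist(p',q') \ge \dist(p,q) - 2B - C_1$ for a constant $C_1 = C_1(\lambda,c,B,\delta)$. In particular, $\dist_\gamma(p,q) = \diam(p'\cup q') \ge d - 1 - 2B - C_1$.

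The last step is to upgrade this to the endpoints $\alpha^-, \alpha^+$ of $\alpha$, which is where the genuine work lies. I would invoke the coarse monotonicity of the closest-point projection to a quasi-geodesic: for four points appearing in the order $\alpha^-, p, q, \alpha^+$ along the quasi-geodesic $\alpha$, the projections $\pi_\gamma(\alpha^-), p', q', \pi_\gamma(\alpha^+)$ appear in that order along $\gamma$ up to an additive constant $C_2 = C_2(\lambda,c,\delta)$. The cleanest route is to use Morse to replace $\alpha$ by a geodesic from $\alpha^-$ to $\alpha^+$ (paying a controlled additive cost in each projection) and then apply the standard fact that the closest-point projection of a geodesic onto a quasi-geodesic in a $\delta$--hyperbolic space traces out an almost-monotone path. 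This yields $\dist_\gamma(\alpha^-,\alpha^+) \ge \dist_\gamma(p,q) - C_2$. Combining the two estimates, $\dist_\gamma(\alpha^-,\alpha^+) \ge d - (1 + 2B + C_1 + C_2)$, so $C := 1 + 2B + C_1 + C_2$ works.

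The main obstacle is making the coarse-monotonicity statement in the final step precise and quantitative; the first two steps are essentially triangle inequalities plus Morse. This monotonicity is well known in the hyperbolic-geometry folklore (it is the dual of coarse Lipschitzness of $\pi_\gamma$), and the author can either cite it or derive it from a thin-quadrilateral argument applied to $\alpha^-, p, q, \alpha^+$ after replacing $\alpha$ by a geodesic via Morse stability.
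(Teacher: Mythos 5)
The paper does not give a proof of this lemma at all — it is labelled ``a well-known fact about $\delta$--hyperbolic spaces'' and the proof is omitted — so there is nothing to compare your argument against; I can only assess it on its own.

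Your outline is correct, and the structure is the standard one: pick near-diameter witnesses $p,q$, bound $\dist(p',q')$ from below, then push that bound out to the endpoints via coarse monotonicity of $\pi_\gamma$ after straightening $\alpha$ to the geodesic $[\alpha^-,\alpha^+]$ by Morse. Two minor remarks. First, step two is over-engineered: $\dist(p',q')\ge \dist(p,q)-2B-O(1)$ is just the triangle inequality together with the coarse well-definedness of closest-point projection onto a quasi-convex set; no Morse is needed there. Second, when you write up the monotonicity step, be explicit that it gives containment of $p',q'$ in a bounded neighborhood of the subarc $\gamma[\pi_\gamma(\alpha^-),\pi_\gamma(\alpha^+)]$, and that you then need Morse applied to $\gamma$ itself to see that this subarc has ambient diameter at most $\dist(\pi_\gamma(\alpha^-),\pi_\gamma(\alpha^+))$ plus an additive constant (the raw quasi-geodesic inequality for $\gamma$ would lose a multiplicative $\lambda$, which the statement of the lemma does not allow). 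With that caveat spelled out, the argument is complete and gives $C=C(\lambda,c,B,\delta)$ as required.
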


The main result of this section is the following estimate that generalizes \cite[Proposition 4.18]{BBF21}. 

\begin{prop}\label{thickandline}
For any $K>0$, there exists $R>0$ such that the following holds. 
Given any $T>0$, let $\A_V=\A_V^{T,R}$ be the collection of $(\lambda,c)$-quasi-axes provided by Lemma \ref{Ext.Thick}. Then for any two points $x, y\in \X$, 
\begin{align*}
\label{ConeoffYvDistFormulaEQ}
d_{V}^{T,R}(x, y) \le 2(D_0+1)\sum_{\gamma\in \A_V} \ignore{d^{\X}_\gamma(x, y)}{K}
\end{align*}
where $D_0$ is the constant provided by Theorem \ref{thm:hierarchy_path}. 
\end{prop}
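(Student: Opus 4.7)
The approach is to attach to each thick subpath of a realizing hierarchy path a quasi-axis from $\A_V$ via Lemma~\ref{Ext.Thick}, convert the thick distance on $\C V$ into the projection distance to that axis via Lemma~\ref{ProjBd.hyp}, and then group by axis, exploiting that closest-point projection to a quasi-geodesic in a hyperbolic space is coarsely order-preserving, so that the contributions to a fixed $\alpha$ telescope into $\dist^{\X}_\alpha(x,y)$ up to a bounded additive overlap. I will pick $R$ at the end, depending on $K$ and the universal constants $\delta,\lambda,c,B,C,D_0$ but (crucially) not on $T$.

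Given $x,y\in\X$ and $T>0$, fix a $D_0$-hierarchy path $\gamma$ and disjoint thick subpaths $\gamma_1,\dots,\gamma_n\subset\gamma$ with $(\gamma_i^-,\gamma_i^+)\in\PP_T(\s_V^{\circ})$ that realize $\dist_V^{T,R}(x,y)$ up to $\epsilon$. Since $\ignore{\dist_V(\gamma_i^-,\gamma_i^+)}{R}=0$ whenever $\dist_V(\gamma_i^-,\gamma_i^+)<R$, I discard such subpaths and assume $\dist_V(\gamma_i^-,\gamma_i^+)\ge R$ throughout. The inclusion $\PP_T(\s_V^{\circ})\subseteq\PP_{\max\{\hat T,R\}}(\s_V^{\circ})$ lets me apply Lemma~\ref{Ext.Thick} to obtain $\alpha_i\in\A_V$ with $[\pi_V(\gamma_i^-),\pi_V(\gamma_i^+)]\subset\neb_B(\alpha_i)$; Lemma~\ref{ProjBd.hyp} then gives
\[\dist_V(\gamma_i^-,\gamma_i^+)\le\dist^{\X}_{\alpha_i}(\gamma_i^-,\gamma_i^+)+C.\]

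The crux is, for each $\alpha\in\A_V$ and $I_\alpha=\{i:\alpha_i=\alpha\}$, to establish
\[\sum_{i\in I_\alpha}\dist^{\X}_\alpha(\gamma_i^-,\gamma_i^+)\le\dist^{\X}_\alpha(x,y)+C_1|I_\alpha|\]
for a constant $C_1$ depending only on $\delta,\lambda,c,B,D_0$. For $i\in I_\alpha$ each endpoint $\pi_V(\gamma_i^{\pm})$ lies within $B$ of $\alpha$, and the complete list of such endpoints appears in this order along the unparametrized $D_0$-quasi-geodesic $\pi_V(\gamma)$ from $\pi_V(x)$ to $\pi_V(y)$. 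Two facts from hyperbolic geometry then take over: (a) a quasi-geodesic whose endpoints lie in $\neb_B(\alpha)$ stays within a uniform neighborhood of $\alpha$ (by thin quadrilaterals), and (b) a quasi-geodesic confined to a bounded neighborhood of $\alpha$ projects monotonically to $\alpha$ up to bounded error (otherwise it would need to U-turn, contradicting its quasi-geodesic constants). Applying (a) and (b) to each subpath of $\pi_V(\gamma)$ between consecutive $I_\alpha$-endpoints yields that the projections $\pi_\alpha(\pi_V(\gamma_i^{\pm}))$ occur in essentially the same order along $\alpha$, within bounded error of the arc from $\pi^{\X}_\alpha(x)$ to $\pi^{\X}_\alpha(y)$; summing consecutive increments gives the claim. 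I expect this ordering step to be the main obstacle, since carefully verifying (a) and (b) with uniform constants requires attention to the quasi-geodesic constants of $\A_V$ and of hierarchy paths, and one must control the total backward overlap across all transitions within $I_\alpha$.

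Combining the two estimates,
\[\sum_i\dist_V(\gamma_i^-,\gamma_i^+)\le\sum_{\alpha\in\A_V}\dist^{\X}_\alpha(x,y)+(C+C_1)n.\]
The same order-preservation gives $\dist^{\X}_\alpha(x,y)\ge R-C-C_0$ for every contributing $\alpha$ (with $C_0$ from the error in (b)), so for $R\ge K+C+C_0$ the sum on the right equals $\sum_{\alpha\in\A_V}\ignore{\dist^{\X}_\alpha(x,y)}{K}$. Since each retained summand on the left is $\ge R$, $n\le R^{-1}\sum_i\dist_V(\gamma_i^-,\gamma_i^+)$, and substituting this in and rearranging yields
\[\sum_i\dist_V(\gamma_i^-,\gamma_i^+)\le\frac{1}{1-(C+C_1)/R}\sum_{\alpha\in\A_V}\ignore{\dist^{\X}_\alpha(x,y)}{K}.\]
Choosing $R$ so large that the prefactor is at most $2(D_0+1)$ completes the argument after taking the supremum over hierarchy paths and decompositions.
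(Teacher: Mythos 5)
Your proposal breaks down at the very first application of Lemma~\ref{Ext.Thick}. That lemma requires the pair $(x,y)$ to lie in $\PP_{\max\{\hat T,R\}}(\s_V)$ --- thick for \emph{all} domains nested in $V$, including $V$ itself, so in particular $\dist_V(x,y)\le\max\{\hat T,R\}$. This is not a typo but is forced by the proof of the lemma: it invokes the distance formula for $(\F_V,\s_V)$ to conclude $\dist_{\F_V}(x,y)\le r$, and that step collapses if $\dist_V$ is unbounded. (The ``$\s^\circ_V$'' appearing once inside its proof is a slip; the hypothesis in the statement is what is actually used.) You instead apply the lemma to the pair $(\gamma_i^-,\gamma_i^+)$, which you have just arranged to satisfy $\dist_V(\gamma_i^-,\gamma_i^+)\ge R$ and which can have $\dist_V$ arbitrarily large while remaining $T$--thick for $\s_V^\circ$. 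The inclusion $\PP_T(\s_V^\circ)\subseteq\PP_{\max\{\hat T,R\}}(\s_V^\circ)$ you cite controls only the proper subdomains; it says nothing about $\dist_V$, which is exactly what the lemma needs. Geometrically, a $G_V$--finite collection of quasi-axes cannot fellow-travel arbitrarily long geodesics in $\C V$, so a single $\alpha_i\in\A_V$ with $[\pi_V(\gamma_i^-),\pi_V(\gamma_i^+)]\subset\neb_B(\alpha_i)$ simply need not exist for a long thick subpath.

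This is the precise reason the paper's proof inserts a further subdivision: each $\alpha_i$ is cut into consecutive pieces $\tilde\alpha_{i,j}$ with $\dist_V(\tilde\alpha_{i,j}^-,\tilde\alpha_{i,j}^+)=R$, so that each piece satisfies $\dist_U\le\max\{\hat T,R\}$ for \emph{every} $U\in\s_V$, and only then is Lemma~\ref{Ext.Thick} applied chunkwise. With that extra layer in place, a piece contributes a definite diameter $\ge R$ to $\pi_V(\alpha_i)\cap\neb_B(\gamma_{i,j})$, and the rest of the paper's argument bounds $\sum_i\dist_V(\alpha_i^-,\alpha_i^+)$ by covering with these neighborhoods and using $\diam(\pi_V(\beta)\cap\neb_B(\gamma))$ together with Lemma~\ref{ProjBd.hyp}, rather than the explicit monotone-projection bookkeeping you sketch. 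Your order-preservation step in (a)/(b) is in the right spirit and could likely be made to work, but it would have to be carried out at the level of $R$--chunks, not of whole thick subpaths; without the chunking, the assignment $i\mapsto\alpha_i$ you need does not exist, and the remainder of the argument has nothing to sum.
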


\begin{proof}
Let $C=C(\lambda+D_0, c+D_0, B, E)$ be the constant provided by Lemma \ref{ProjBd.hyp}. 
Let $R>2D_0(C+1)+K$. We will show that projections to quasi-axes $\A_V=\A_V^{T,R}$ bound the $(T,R)$-thick distance in $V$ from above. 

For any two points $x,y\in \X$, let $\beta$ be a $(D_0,D_0)$-hierarchy path connecting $x$ and $y$ realizing $d_V^{T,R}(x,y)$. Let $\{\alpha_1,\dots,\alpha_n\}$ be the collection of disjoint subpaths of $\beta$ with $d_V(\alpha_i^-,\alpha_i^+)\ge R$ and $(\alpha_i^-,\alpha_i^+)\in\PP_T(\s_V^{\circ})$ such that
\[d^{T,R}_V(x,y) = \sum_{i=1}^n d_V(\alpha_i^-,\alpha_i^+).\]

By the definition of gate maps, $\pi_V(z)$ is coarsely $\pi_V(\g_{\F_V}(z))$ for any $z\in \X$. Thus, the difference between $d_V(\alpha_i^-,\alpha_i^+)$ and $d_V(\g_{\F_V}(\alpha_i^-),\g_{\F_V}(\alpha_i^+))$ is uniformly bounded. This enables us to replace $\alpha_i$ with $\g_{\F_V}(\alpha_i)$ from now on. We divide each $\alpha_i$ into several consecutive subpaths $\{\tilde\alpha_{i,j}~|~1\le j\le m_i\}$ with $d_V(\tilde\alpha_{i,j}^-,\tilde\alpha_{i,j}^+)=R$ for $j=1,\dots,m_i-1$ and $d_V(\tilde\alpha_{i,m_i}^-,\tilde\alpha_{i,m_i}^+)\le R$. By Lemma \ref{geodthick}, we already know that $(\tilde\alpha_{i,j}^-,\tilde\alpha_{i,j}^+)\in\PP_{\hat T}(\s_V^{\circ})$ for every pair $(i,j)$. 
Thus, \[d_U(\tilde\alpha_{i,j}^-,\tilde\alpha_{i,j}^+)\le \max\{\hat T, R\}\] 
for all $U\in \s_V$. By Lemma \ref{Ext.Thick}, there exists $\gamma_{i,j}\in \A_V$ such that $\pi_U(\tilde\alpha_{i,j})\subset \neb_{B}(\gamma_{i,j})$ (with an increased $B$ by a uniform constant), which yields 
\[\diam(\pi_V(\alpha_i)\cap \neb_{B}(\gamma_{i,j}))\ge R.\]

Let $\A_V'$ be the collection of all distinct $\gamma_{i,j}$. We see that
\[\pi_V(\alpha_i \backslash \tilde\alpha_{i,m_i}) \subset \bigcup_{\gamma\in\A_V'} \pi_V(\alpha_i)\cap \neb_{B}(\gamma).\]
Thus, we have 
\begin{align*}
    d_V(\alpha_i^-,\alpha_i^+)&\le \sum_{\gamma\in\A_V'}\diam(\pi_V(\alpha_i)\cap \neb_{B}(\gamma))+R\\
    &\le 2\sum_{\gamma\in\A_V'}\diam(\pi_V(\alpha_i)\cap \neb_{B}(\gamma)).
\end{align*}
Summing up from $i=1$ to $n$ yields that
\begin{align}\label{thick.estimate.by.diam}
d^{T,R}_V(x,y)\le 2\sum_{\gamma\in\A_V'}(D_0\diam(\pi_V(\beta)\cap \neb_{B}(\gamma))+D_0).
\end{align}

Note that $R>2D_0(C+1)+K$. Thus, Lemma \ref{ProjBd.hyp} tells us that $d^{\X}_{\gamma}(x,y)\ge \diam(\pi_V(\beta)\cap\neb_{B}(\gamma))-C\ge R-C >D_0(C+1)+K$ for each $\gamma\in\A_V'$. We now estimate by Lemma \ref{ProjBd.hyp} that
\begin{align*}
D_0\diam(\pi_V(\beta)\cap \neb_{B}(\gamma))+D_0 &\le D_0(d^{\X}_{\gamma}(x,y)+C)+D_0\\
&<(D_0+1)d^{\X}_{\gamma}(x,y).
\end{align*}

Combining this with Equation (\ref{thick.estimate.by.diam}), we obtain that
\begin{align*}
d^{T,R}_V(x,y)&\le 2(D_0+1)\sum_{\gamma\in \A_V'}d^{\X}_{\gamma}(x,y)\\
&= 2(D_0+1)\sum_{\gamma\in \A_V'} \ignore{d^{\X}_\gamma(x, y)}{K}\\
&\le 2(D_0+1)\sum_{\gamma\in \A_V} \ignore{d^{\X}_\gamma(x, y)}{K}.
\end{align*}

\end{proof}

\section{Construction of quasi-trees}
\label{sec_quasitrees}

This section is devoted to the proof of Theorem \ref{maintheorem}. Let $(G,\s)$ be a relative HHG that is virtually colorable and assume that every domain in $\s$ is of type I or type II. The index set $\s$ admits a $G$-invariant decomposition $\s=\s^{I}\sqcup \s^{II}$, where $\s^I$ (respectively, $\s^{II}$) only contains domains of type I (respectively, type II). Note that type I and type II are not mutually exclusive, but for those domains of both types, we can simply put them in $\s^{II}$. 

Before starting the proof, we summarize the dependencies of some important constants that will be used in the proof as follows. 
    \[\xymatrix@C+2.5pc{(E,D_0,A) \ar[r]^{\qquad \text{Corollary \ref{fin_index}}} & \theta \ar[r]^{\text{Theorem \ref{proj.axioms}}} & \xi \ar[r]^{\text{Lemma \ref{thickandqtree}}} & K \ar[r]^{\text{Proposition \ref{thickandline}}} & R \ar[r]^{\text{Theorem \ref{thm.thick.dist}}} & T} \]
Here $A$ stands for the acylindrical constants. We draw an arrow from a constant $M$ to $N$ if $N$ depends on $M$. Remember that the dependency graph shown above is incomplete, but we hope it is helpful to the reader. 

\subsection{Quasi-trees from domains of type I}
\label{sec_quasitrees_a}

In this subsection, we are going to prove the following proposition.

\begin{prop}\label{typei}
    There exists a finite-index subgroup $H<G$ satisfying the following.    
    For any sufficiently large constant $R$ and any $T>0$, there exist quasi-trees $\T_1,\dots,\T_n$ such that $H$ acts on $\prod_{j=1}^n \T_j$ diagonally and for any choice of base points $o_j\in \T_j$ we have
    \begin{align*}
        \sum_{V\in\s^I}d_{V}^{T,R}(1, h) \preceq \sum_{j=1}^nd_{\T_j}(o_j, ho_j)
    \end{align*}
    for any $h\in H$.
\end{prop}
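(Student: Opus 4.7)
The plan is to organize the Type I domains by color, assemble within each color class a BBF projection system built from the quasi-axes of Lemma \ref{Ext.Thick}, promote it to a quasi-tree via Theorem \ref{bbf}, and finally chain Proposition \ref{thickandline} with the distance formula of Theorem \ref{distfor2}.

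First, by virtual colorability I would choose a finite-index subgroup $H<G$ preserving each color class of a coloring $\s^I=\bigsqcup_{i=1}^n\s_i^I$; any two distinct domains in the same $\s_i^I$ are transverse. Any further finite-index refinement (coming from the referenced Corollary \ref{fin_index}, needed to separate quasi-axes at infinity) can be absorbed into $H$. For each $V\in\s_i^I$, Lemma \ref{Ext.Thick} produces a $G_V$-finite collection $\A_V=\A_V^{T,R}$ of uniform $(\lambda,c)$-quasi-axes in $\C V$. Set $\A_i:=\bigsqcup_{V\in\s_i^I}\A_V$; this is $H$-finite. Projections are defined as in the setup preceding Theorem \ref{proj.axioms}: closest-point projection inside a common $\C V$, and, across distinct transverse $V\neq W$ in the same color class, closest-point projection of the transversal set $\rho^W_V\subset\C V$ to the target axis.

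The central task is to verify the hypothesis of Theorem \ref{proj.axioms}: a uniform bound $\theta$ on $\diam(\pi_\alpha(\beta))$ for distinct $\alpha,\beta\in\A_V$. Here I would use that $G_V\curvearrowright\C V$ is acylindrical (Type I condition (b)): distinct $G_V$-translates of the quasi-axis of a loxodromic element admit the standard BBF bounded-projection estimate, and since $\A_V$ consists of finitely many $G_V$-orbits, refining $H$ so that distinct elements of $\A_V$ do not share endpoints at infinity propagates the bound across orbits. The resulting $\theta$ depends only on $E$, $D_0$ and the acylindricity data. Theorem \ref{proj.axioms} then yields projection axioms for $\A_i$ with constant $\xi$; Theorem \ref{strong axioms} upgrades to the strong axioms, and for any $K>2\xi$ Theorem \ref{bbf} makes $\T_i:=\C_K\A_i$ a quasi-tree (each $\gamma\in\A_i$ is a quasi-line, hence a quasi-tree with uniform constants). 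Since $H$ permutes each $\A_i$ preserving the projection data, it acts on $\T_i$ by isometries, and diagonally on $\prod_i\T_i$.

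For the distance estimate, choose $R$ larger than the threshold of Proposition \ref{thickandline} applied with this $K$. Then for every $V\in\s^I$ and $h\in H$,
\[
\dist_V^{T,R}(1,h)\le 2(D_0+1)\sum_{\gamma\in\A_V}\ignore{\dist^{\X}_\gamma(1,h)}{K},
\]
and summing over $V\in\s_i^I$ and then over $i$ bounds $\sum_{V\in\s^I}\dist_V^{T,R}(1,h)$ by $\sum_i\sum_{\gamma\in\A_i}\ignore{\dist^{\X}_\gamma(1,h)}{K}$. By Theorem \ref{distfor2}, for any base point $o_i\in\T_i$,
\[
\sum_{\gamma\in\A_i}\ignore{\dist_\gamma(o_i,ho_i)}{K}\le 4\,\dist_{\T_i}(o_i,ho_i).
\]
It remains to compare $\dist^{\X}_\gamma(1,h)$ with $\dist_\gamma(o_i,ho_i)$. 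Fixing a base point $p\in\X$ and letting $o_i^\ast$ be a canonical point obtained by projecting $\pi_{V_0}(p)$ to a chosen $\gamma_0\in\A_i$, the $H$-equivariance of $\pi_V$ and $\pi_\gamma$ shows that $\dist^{\X}_\gamma(1,h)$ agrees with $\dist_\gamma(o_i^\ast,ho_i^\ast)$ up to an additive constant depending only on the projection data; changing $o_i^\ast$ to an arbitrary $o_i\in\T_i$ shifts the right-hand side by at most $2\dist_{\T_i}(o_i,o_i^\ast)$, which is independent of $h$ and absorbed into the $\preceq$.

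The main obstacle is the projection-axiom verification, for two reasons: acylindricity of the image is needed to control projections between different $G_V$-translates of a quasi-axis, while separating distinct elements of $\A_V$ at infinity requires a careful finite-index refinement (supplied by Corollary \ref{fin_index}). Once that is in place, the remaining steps are bookkeeping inside the already-built BBF machinery and the thick-distance toolkit from Sections \ref{sec_thickdistanceformula}--\ref{sec_appoximation}.
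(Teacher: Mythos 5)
Your overall architecture matches the paper's: use Lemma \ref{Ext.Thick} to get quasi-axes, use acylindricity and separability (via Corollary \ref{fin_index}) to get bounded projections, build BBF quasi-trees, and chain Proposition \ref{thickandline} with the BBF distance formula. One cosmetic difference: you index the quasi-trees by color classes $\s_i^I$, so each $\T_i$ contains several $H$-orbits of quasi-axes; the paper instead takes $\T_j = \C_{K'}\A_j$ where $\A_j$ is a single $H$-orbit of a representative quasi-axis $\gamma_j$. The paper's choice makes the bounded-projection verification for Theorem \ref{proj.axioms} cleaner, since any two distinct $\alpha,\beta\in\A_j$ lying in the same $\C V$ are automatically $\Stab_H(V)$-translates of each other and Corollary \ref{fin_index} applies directly; with your color-class $\A_i$ you also have to handle pairs coming from different $H$-orbits within $\C V$, which forces an extra finiteness/compactness argument that you gesture at but do not really nail down.

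The real gap is in the last step, where you compare $\dist^{\X}_\gamma(1,h)$ with $\dist_\gamma(o_i,ho_i)$. You claim that $H$-equivariance makes these agree ``up to an additive constant depending only on the projection data.'' This is false as a per-$\gamma$ uniform bound. The quantity $\delta_\gamma(1) := |\pi^{\X}_\gamma(1) - \Pi'_\gamma(o_j)|$ is uniformly small only for $\gamma$ lying in a domain $\C gU$ with $gU\ne U$ (where $U$ is the domain carrying $\gamma_j$ and the base point), because then $\pi_{gU}(1)$ is $E$-close to $\rho^U_{gU}$ by the choice $1\in\pr_U$. For the (possibly infinitely many) $\gamma$ lying in $\C U$ itself, $\delta_\gamma(1)$ has no uniform bound, so a naive per-axis comparison blows up. The paper's Lemma \ref{thickandqtree} handles this by splitting $\dist^H_\gamma(1,h) \le D_\gamma(1,h) + \delta_\gamma(1)+\delta_\gamma(h)$, applying the threshold inequality of Lemma \ref{threshold_estimate}, and then using the finiteness axiom (P2) together with $H$-equivariance ($\delta_\gamma(h) = \delta_{h^{-1}\gamma}(1)$) to show that $\sum_\gamma \ignore{\delta_\gamma(1)}{M}$ is a finite constant $\Delta_j$ that is independent of $h$. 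Without this argument, the accumulation of small per-axis errors across infinitely many $\gamma$ is not controlled, and the asserted inequality $\sum_{V}\dist_V^{T,R}(1,h) \preceq \sum_j \dist_{\T_j}(o_j,ho_j)$ does not follow. So the ``bookkeeping'' you defer is in fact where the key base-point choice ($o_j\in\gamma_j$ and $1\in\pr_U$) and the finiteness axiom (P2) actually do work; you should carry out this estimate explicitly.
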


Before the proof, we recall the following proposition.

\begin{prop}\cite[Proposition~3.4]{BBF21}
\label{prop.bddproj}
    Let a group $H$ act on a $\delta$-hyperbolic space $Y$. Assume that the image of $H$ in $\text{Isom}(Y)$ is acylindrical. Consider a loxodromic element $g\in H$ and the collection $\mathbb A$ of all $H$-translates of a fixed $(\lambda,c)$-quasi-axis of $g$. Then there exists a constant $\theta>0$ depending only on $\lambda,c,\delta$ and the acylindrical constants such that for any $\gamma\in \mathbb A$, the set $$\{h\in H\mid \diam(\pi_\gamma(h\gamma))\ge  \theta\}$$ is a finite union of double $EC(g)$-cosets. 
\end{prop}

\begin{cor}
\label{fin_index}
    Let $U\in \s^I$. Consider a $(\lambda,c)$-quasi-axis $\gamma\subset \C U$ for some loxodromic element of the acylindrical action of $G_U$. Then there exists $\theta>0$, only depending on $\lambda,c,E$ and the acylindrical constants, and a finite-index subgroup $G_{\gamma}<G$ such that every translate of $\gamma$ by an element of $G_{\gamma}\cap \stab_G(U)$ either has finite Hausdorff distance with $\gamma$ or has $\theta$-bounded projection to $\gamma$. 
\end{cor}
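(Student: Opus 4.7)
The strategy is to apply Lemma \ref{prop.bddproj} directly to the acylindrical image $G_U \curvearrowright \C U$, pull the resulting description of the ``bad'' elements (those whose translates of $\gamma$ have $\theta$--large projection to $\gamma$) back through the restriction homomorphism to $\Stab_G(U)$, and then exploit the Type I separability of $EC(g)$ to find a single finite-index subgroup of $G$ that kills every bad double coset at once.

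Concretely, pick a loxodromic element $g \in \Stab_G(U)$ whose image $\bar g \in G_U$ has $\gamma$ as a $(\lambda,c)$--quasi-axis; such a lift exists because the restriction $\pi : \Stab_G(U) \to G_U$ is surjective by definition. Lemma \ref{prop.bddproj} applied to $G_U \curvearrowright \C U$ yields a constant $\theta > 0$ depending only on $\lambda,c,E$ and the acylindricity constants, together with finitely many elements $a_1,\dots,a_k \in G_U \setminus EC_{G_U}(\bar g)$ for which
\[
\{\bar h \in G_U : \diam(\pi_\gamma(\bar h\gamma)) \ge \theta\} \;=\; \bigcup_{i=1}^k EC_{G_U}(\bar g)\, a_i\, EC_{G_U}(\bar g).
\]
Since every element of $\ker \pi$ acts trivially on $\C U$, it preserves $\gamma$ setwise and therefore lies in $EC_G(g)$. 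In particular $EC_G(g) = \pi^{-1}(EC_{G_U}(\bar g))$, and the preimage under $\pi$ of the $i$-th bad double coset has the form $EC_G(g)\, \tilde a_i\, EC_G(g)$ for any lift $\tilde a_i \in \Stab_G(U)$ of $a_i$, with $\tilde a_i \notin EC_G(g)$.

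To close, I invoke the Type I separability of $EC_G(g)$ in $G$: for each $i$ there exists a finite-index subgroup $K_i \le G$ containing $EC_G(g)$ but not $\tilde a_i$. Since $K_i$ is a \emph{subgroup} containing $EC_G(g)$, the exclusion $\tilde a_i \notin K_i$ propagates to the entire double coset $EC_G(g)\, \tilde a_i\, EC_G(g) \subseteq G \setminus K_i$. Setting $G_\gamma := \bigcap_{i=1}^k K_i$ then produces a finite-index subgroup of $G$ with the advertised property: any $h \in G_\gamma \cap \Stab_G(U)$ either belongs to $EC_G(g)$, so $h\gamma$ has finite Hausdorff distance from $\gamma$, or maps under $\pi$ outside every bad double coset, so $\diam(\pi_\gamma(h\gamma)) < \theta$. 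The one delicate point is that $\ker \pi$ may be enormous — this is exactly why Type I only asks for acylindricity of the \emph{image} — so Lemma \ref{prop.bddproj} genuinely lives on $G_U$; the whole argument rests on being able to promote its conclusion back to $G$ via the identification $\ker \pi \subseteq EC_G(g)$ combined with separability.
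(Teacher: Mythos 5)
Your proof is correct and matches the paper's intent, which compresses the whole argument into the single sentence ``It is clear by Lemma~\ref{prop.bddproj} and separability of quasi-axes.'' You have correctly identified and handled the only real subtlety, namely that $\ker\pi$ acts trivially on $\C U$ and hence sits inside $EC_{\Stab_G(U)}(g)$, so the preimage of each bad double coset in $G_U$ is again a single double coset of $EC(g)$ in $\Stab_G(U)$, which separability can then exclude. One small simplification worth noting: Lemma~\ref{prop.bddproj} is already phrased for a group $H$ whose \emph{image} in $\text{Isom}(Y)$ is acylindrical, so you may apply it directly with $H=\Stab_G(U)$ acting on $\C U$; this hands you the bad set as a finite union of $EC_{\Stab_G(U)}(g)$--double cosets in $\Stab_G(U)$ without passing to $G_U$ and pulling back, after which separability finishes exactly as you argue.
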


\begin{proof}
    This is clear by Proposition \ref{prop.bddproj} and separability of quasi-axes. 
\end{proof}

By definition of relative HHGs, $\s^I$ consists of finitely many $G$-orbits, so acylindrical constants for $U\in \s^I$ can be chosen uniformly. Thus, Lemma \ref{Ext.Thick} provides uniform constants $\lambda\ge 1,c\ge 0$ for every $U\in \s^I$. This further gives a uniform constant $\theta>0$ by Corollary \ref{fin_index}. 

Let $\mathcal U$ be a $G$-representative set of $\s^I$ such that $1\in \pr_U$ for any $U\in \mathcal U$. Let $U\in \mathcal U$. Let $T>0$ and let $K>0$ be a sufficiently large constant that will be decided by Lemma \ref{thickandqtree}. Lemma \ref{Ext.Thick} provides a $G_U$-finite collection $\A_U=\A_U^{T,R}$ of $(\lambda,c)$-quasi-axes, where $R$ is provided by Proposition \ref{thickandline}. 
By Corollary \ref{fin_index}, we can find a finite-index subgroup $H_U<G$ such that for any $\gamma\in \A_U$ and $h \in H_U\cap \stab_G(U)$, either $d_{Haus}(h\gamma,\gamma) < \infty$ or $\diam \pi_{\gamma}(h\gamma) < \theta$. For any $g\in G$, we define $\A_{gU}:=\{g\gamma~|~\gamma\in \A_U\}$. 

Let $\A := \bigsqcup_{U\in \s} \A_U$ and $H := \bigcap_{U\in \mathcal U} H_U$.
Since $\mathcal U$ is finite, $H$ is of finite index in $G$. By adding finitely many domains to $\mathcal U$ so that there is one representative for each $H$-orbit on $\s^I$, we obtain an $H$-representative set $\tilde{\mathcal U}$ of $\s^I$. We still assume that $1\in \pr_U$ for any $U\in \tilde{\mathcal U}$. 
Let $\{\gamma_1,\dots,\gamma_n\}$ be an $H$-representative set of $\A$. We assume that every representative $\gamma_j$ is contained in $\C U$ for some $U\in \tilde{\mathcal U}$. Let $\A_j\subset \A$ be the $H$-orbit of $\gamma_j$.

Without loss of generality, we assume that $H$ is colorable instead of virtually colorable. Thus, the $H$-orbit of any domain is pairwise transverse. By \cite[Lemma 3.4]{HP22}, every $H$-orbit of $\s^I$ is an $H$-projection system with constant $s_0+4E$, where $s_0$ is the constant provided by Theorem \ref{thm:distance_formula}. Thus, every $\A_j$ is an $H$-projection system with a uniform projection constant $\xi=\xi(\theta,s_0,\lambda,c,E)$ by Theorem \ref{proj.axioms}. The projections defined there will be denoted by $\Pi_{\gamma}$. 

Using Theorem \ref{strong axioms}, we obtain modified projections $\Pi'_{\gamma}$ such that $(\A_j,\{\Pi'_{\gamma}\})$ satisfies the strong projection axioms with constant $\xi'=\xi'(\xi)$ and that $\Pi_{\gamma}(\alpha)$ and $\Pi'_{\gamma}(\alpha)$ are apart from each other within a uniform Hausdorff distance $\epsilon=\epsilon(\xi)$. For any $K'\ge 4\xi'$, $\C_{K'}\A_j$ is a quasi-tree by Theorem \ref{bbf}. The following lemma is an estimate via the orbit map between the projections $\pi^H_{\gamma}=\pi_{\gamma} \circ \pi_V$ in the relative HHG structure and the projections $\Pi'_{\gamma}$ in the quasi-tree $\C_{K'}\A_j$. Recall from Section \ref{sec_appoximation} that $d^H_{\gamma}(g,h)$ is defined as $\diam (\pi^H_{\gamma}(g)\cup \pi^H_{\gamma}(h))$ for any $g,h \in H$.

\begin{lem}\label{thickandqtree}
    Fix a base point $o_j\in\gamma_j$ for each $j=1,\dots,n$. There exists a sufficiently large constant $K'=K'(\xi, \lambda, c, E)$ and a constant $\Delta>0$ such that if $K\ge 2K'$, then 
    \[\sum_{\gamma\in\A_j}\ignore{d^H_{\gamma}(1,h)}{K}\le 8d_{\C_{K'}\A_j}(o_j,h o_j)+\Delta\]
    for any $h\in H$ and any $j=1,\dots,n$.
\end{lem}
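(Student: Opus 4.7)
\emph{Proof plan.} The strategy is to apply the BBF distance formula inside the quasi-tree $\C_{K'}\A_j$ and to compare, quasi-axis by quasi-axis, the BBF projections $\Pi'_\gamma$ with the ``thick'' projections $\pi_\gamma\circ\pi_V$ that define $\dist^H_\gamma$.

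First, I would choose $K'\ge 4\xi'$ so that Theorem~\ref{distfor2} applies to the projection system $(\A_j,\{\Pi'_\gamma\})$, yielding
\[
\tfrac14\sum_{\gamma\in\A_j}\ignore{\diam\bigl(\Pi'_\gamma(o_j)\cup\Pi'_\gamma(ho_j)\bigr)}{K'}\;\le\;\dist_{\C_{K'}\A_j}(o_j,ho_j).
\]
Thus it suffices to prove a pointwise inequality of the form $\ignore{\dist^H_\gamma(1,h)}{K}\le 2\,\ignore{\diam(\Pi'_\gamma(o_j)\cup\Pi'_\gamma(ho_j))}{K'}$ for all but finitely many $\gamma\in\A_j$; the remaining boundedly-many exceptional terms will be absorbed into $\Delta$.

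Second, I would establish the coarse comparison: there exists a constant $C_0=C_0(\xi,\lambda,c,E,\kappa_0)$ such that for every $\gamma\in\A_j\setminus\{\gamma_j,h\gamma_j\}$,
\[
\dist^H_\gamma(1,h)\;\le\;\diam\bigl(\Pi'_\gamma(o_j)\cup\Pi'_\gamma(ho_j)\bigr)+2C_0.
\]
Let $V$ be the domain of $\gamma$ and $U_j$ that of $\gamma_j$. By Theorem~\ref{strong axioms}, $\Pi'_\gamma$ differs from the projection $\Pi_\gamma$ of Theorem~\ref{proj.axioms} by at most the uniform Hausdorff constant $\epsilon$, and by construction $\Pi_\gamma(o_j)$ is $\pi_\gamma(\gamma_j)$ when $V=U_j$ and $\pi_\gamma(\rho^{U_j}_V)$ otherwise. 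When $V\ne U_j$, consistency applied to $1\in\pr_{U_j}$ gives $\dist_V(\pi_V(1),\rho^{U_j}_V)\le\kappa_0$, and since closest-point projection to a $(\lambda,c)$--quasi-geodesic in the $E$--hyperbolic space $\C V$ is coarsely Lipschitz, we conclude that $\pi_\gamma(\pi_V(1))$ is $C_0$--close to $\Pi'_\gamma(o_j)$. When $V=U_j$ and $\gamma\ne\gamma_j$, I would choose $o_j$ to be a closest point on $\gamma_j$ to $\pi_{U_j}(\g_{\F_{U_j}}(1))$; hyperbolicity of $\C U_j$ together with this choice again gives the uniform bound. Equivariance under $h$ treats the $ho_j$--endpoint symmetrically, and the triangle inequality completes the comparison.

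Third, I would set $K=\max\{K'+2C_0,\,4C_0\}$, so in particular $K\ge 2K'$ once $K'\ge 2C_0$. A direct case-check of the threshold function $\ignore{\cdot}{\cdot}$ (distinguishing whether $\dist^H_\gamma(1,h)<K$ or not) then upgrades the additive comparison to the multiplicative inequality $\ignore{\dist^H_\gamma(1,h)}{K}\le 2\,\ignore{\diam(\Pi'_\gamma(o_j)\cup\Pi'_\gamma(ho_j))}{K'}$ for every $\gamma\ne\gamma_j,h\gamma_j$. Summing over $\A_j$ and applying the lower bound of Step~1 produces the factor $8=2\cdot 4$. The two exceptional quasi-axes $\gamma_j$ and $h\gamma_j$ contribute at most uniformly bounded quantities — they are controlled by the base-point choice and the bounded-projection axiom (P0) — which are absorbed into $\Delta$.

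The principal obstacle is the case $V=U_j$ with $\gamma\ne\gamma_j$, because there the BBF projection is a genuine closest-point projection inside $\C U_j$ and $\pi_V(1)$ is no longer pinned near any $\rho$--set. A careful choice of base point $o_j$ on $\gamma_j$ via the gate to $\F_{U_j}$, together with hyperbolicity of $\C U_j$, is what makes the uniform comparison go through; every other case reduces cleanly via consistency to a $\rho^{U_j}_V$--comparison.
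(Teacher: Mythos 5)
Your overall strategy matches the paper's: apply the BBF distance formula (Theorem~\ref{distfor2}) in $\C_{K'}\A_j$ and compare, axis-by-axis, the projections $\pi^H_\gamma$ with the BBF projections $\Pi'_\gamma$, and your treatment of the cross-domain case $V\ne U_j$ via consistency and $\rho^{U_j}_V$ is essentially the paper's argument. The gap is in the case $V=U_j$, $\gamma\ne\gamma_j$. You claim that choosing $o_j$ to be a closest point on $\gamma_j$ to $\pi_{U_j}(\g_{\F_{U_j}}(1))$, together with hyperbolicity of $\C U_j$, yields a uniform bound $|\pi^H_\gamma(1)-\Pi'_\gamma(o_j)|\le C_0(\xi,\lambda,c,E,\kappa_0)$ for all such $\gamma$. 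This is not true. Here $\Pi'_\gamma(o_j)$ is $\xi$-coarsely $\pi_\gamma(\gamma_j)$, so what you need to bound uniformly is $\dist_\gamma\bigl(\pi_{U_j}(1),\,o_j\bigr)$. But $o_j$ being the closest point on $\gamma_j$ to $\pi_{U_j}(1)$ says nothing about projections to a \emph{third} quasi-geodesic $\gamma$: a quasi-axis $\gamma$ that fellow-travels a long subsegment of $[\pi_{U_j}(1),o_j]$ (which is roughly perpendicular to $\gamma_j$, so $\gamma$ can still have $\theta$-bounded projection onto $\gamma_j$) produces $\dist_\gamma(\pi_{U_j}(1),o_j)$ comparable to $\dist_{U_j}(\pi_{U_j}(1),o_j)$, and this distance is not controlled by $(\xi,\lambda,c,E,\kappa_0)$. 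Consequently your list of exceptional axes ($\gamma_j$ and $h\gamma_j$ only) is too short.

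There is also a hidden circularity in pushing this constant into the threshold. The lemma requires $K'=K'(\xi,\lambda,c,E)$, and $K'$ sits upstream of $R$ and $T$, which in turn determine the collection $\A_j$. If $C_0$ depends on $d=\dist_{U_j}(\pi_{U_j}(1),o_j)$, it depends on $\gamma_j$ and hence on $R,T$; then setting $K=\max\{K'+2C_0,4C_0\}$ with $K'\ge 2C_0$ makes $K'$ depend on data downstream of itself. The paper avoids both problems simultaneously: it proves the uniform bound $|\pi^H_\gamma(1)-\Pi'_\gamma(o_j)|\le E+F+\epsilon$ only when $\gamma$ lies in a \emph{different} domain than $\gamma_j$, observes that consequently $\delta_\gamma(1)\ge M$ forces $\gamma\subset\C U_j$, and then invokes the finiteness axiom (P2) to conclude that only finitely many $\gamma\in\A_j$ satisfy $\delta_\gamma(1)\ge M$. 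Those finitely many terms are summed into the additive constant $\Delta$ (via the threshold manipulation of Lemma~\ref{threshold_estimate}), which is allowed to depend on everything, leaving $K'$ a function of $(\xi,\lambda,c,E)$ alone. To repair your argument you should replace the claimed uniform intra-domain bound with this finiteness-plus-absorption mechanism.
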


For the proof of Lemma \ref{thickandqtree}, we need the following lemma. 

\begin{lem}\label{threshold_estimate}
    For any constants $A,B\ge 0$ and constants $L,M> 0$, 
    \[\frac{\ignore{A+B}{L+M}}{L+M}\le \frac{\ignore{A}{L}}{L}+\frac{\ignore{B}{M}}{M}.\]
\end{lem}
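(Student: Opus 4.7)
The plan is to split into cases according to which of the three threshold functions vanish, and observe that the non-trivial case reduces to a weighted-mean inequality.

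First, I would observe that if $A+B < L+M$, then $\ignore{A+B}{L+M} = 0$ and the inequality holds trivially since the right-hand side is non-negative. So I may assume $A + B \ge L + M$. By contrapositive of ``$A < L$ and $B < M$ implies $A+B < L+M$'', at least one of $A \ge L$ or $B \ge M$ holds.

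Next, I would case-split. In the symmetric case $A \ge L$ and $B \ge M$, both thresholds on the right activate, and the inequality becomes
\[\frac{A+B}{L+M} \le \frac{A}{L} + \frac{B}{M}.\]
I would prove this by recognising the left-hand side as a weighted average:
\[\frac{A+B}{L+M} = \frac{L}{L+M}\cdot\frac{A}{L} + \frac{M}{L+M}\cdot\frac{B}{M} \le \frac{A}{L} + \frac{B}{M},\]
where the final inequality uses $A/L, B/M \ge 0$ and the weights being at most $1$.

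In the asymmetric case, say $A \ge L$ but $B < M$ (the other sub-case is identical by symmetry), the right-hand side equals $A/L$, and the desired inequality
\[\frac{A+B}{L+M} \le \frac{A}{L}\]
rearranges to $LB \le AM$, which follows from $LB < LM \le AM$ using $B < M$ and $A \ge L$. I do not expect any real obstacle here; the only mild care needed is to make sure every case with a non-zero LHS is covered, which the initial observation guarantees.
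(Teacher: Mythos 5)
Your proof is correct and takes essentially the same approach as the paper: reduce to the case $A+B \ge L+M$, split according to which of the thresholds $A \ge L$, $B \ge M$ hold, and in each case verify the inequality by direct algebra. The only cosmetic difference is that you spell out the weighted-average computation in the case where both thresholds activate, which the paper dismisses as obvious.
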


\begin{proof}
    Assume that $A+B\ge L+M$. First, if $A< L$ then $B> M$. Thus, $\ignore{B}{M} = B\ge \frac{M}{L+M}(A+B)$. Next, if $B< M$ then $A> L$ and the same argument holds. Finally, if $A\ge L$ and $B\ge M$, then all thresholds are reached and the inequality is obviously true. 
\end{proof}

\begin{proof}[Proof of Lemma \ref{thickandqtree}]
    For simplicity, we use $|p-q|$ to mean the distance between two points $p,q$ in the same space. Assume that $\gamma_j\subset \C U$ for some $U\in \tilde{\mathcal U}$. For any $g\in H-\stab_H(U)$, since $1\in \pr_U$ and $U\trans gU$, we have $|\pi_{gU}(1)-\rho^U_{gU}|\le E$ by the definition of standard product regions. 
    
    Assume $\gamma=g\gamma_j$. By hyperbolicity, there exists a constant $F=F(E,\lambda,c)$ such that if $\gamma\subset \C gU\ne \C U$ then $|\pi^H_{\gamma}(1)-\Pi'_{\gamma}(o_j)|\le |\pi_{gU}(1)-\rho^U_{gU}|+F+\epsilon \le E+F+\epsilon$. Let $M>\xi'+E+F+\epsilon$ and define $\delta_{\gamma}(h)=|\pi^H_{\gamma}(h)-\Pi'_{\gamma}(ho_j)|$. We see that if $\delta_{\gamma}(h)=\delta_{h^{-1}\gamma}(1)\ge M$ then $\gamma\subset \C hU$. Thus, for a fixed $h\in H$, there are only finitely many $\gamma\in \A_j$ such that $\delta_{\gamma}(h)\ge M$ by projection axiom (P2). 
    
    Let $K'>2M+4\xi'$. Define $D_{\gamma}(1,h)=|\Pi'_{\gamma}(o_j)-\Pi'_{\gamma}(ho_j)|$. By the triangle inequality and Lemma \ref{threshold_estimate}, we obtain that
    
    \[\ignore{d^H_{\gamma}(1,h)}{K'+2M}\le \frac{K'+2M}{K'}\ignore{D_{\gamma}(1,h)}{K'}+\frac{K'+2M}{2M}\ignore{\delta_{\gamma}(1)+\delta_{\gamma}(h)}{2M}.\]
    Therefore, 
    \[\ignore{d^H_{\gamma}(1,h)}{K}\le 2\ignore{D_{\gamma}(1,h)}{K'}+\frac{K'+2M}{M}\ignore{\delta_{\gamma}(1)}{M}.\]

    Summing over $\gamma\in\A_j$, we obtain that
    \begin{align*}
    \sum_{\gamma\in\A_j}\ignore{d^H_{\gamma}(1,h)}{K} & \le 2\sum_{\gamma\in\A_j}\ignore{D_{\gamma}(1,h)}{K'}+\frac{K'+2M}{M}\sum_{\gamma\in\A_j}\ignore{\delta_{\gamma}(1)}{M}.
    \end{align*}

    By the discussion above, $\Delta_j= \frac{K'+2M}{M}\sum_{\gamma\in\A_j}\ignore{\delta_{\gamma}(1)}{M}$ is a finite constant. Let $\Delta=\max_{1\le j\le n} \Delta_j$. We conclude using Theorem \ref{distfor2}.

\end{proof}

Now we can prove Proposition \ref{typei}.

\begin{proof}[Proof of Proposition \ref{typei}]
    Let $K$ and $K'$ be the constants provided by Lemma \ref{thickandqtree}. Let $T>0$. From the discussion before Lemma \ref{thickandqtree}, we know that the collection $\A$ of quasi-axes provided by Proposition \ref{thickandline} forms $n$ quasi-trees $\C_{K'}\A_j$, $j=1,\dots,n$. Moreover, Proposition \ref{thickandline} tells us that there exists $R>0$ such that  
    \[\sum_{V\in\s^I}d_{V}^{T,R}(1, h) \preceq \sum_{\gamma\in \A} \ignore{d^{H}_\gamma(1, h)}{K}.\]    
    Finally, we conclude by Lemma \ref{thickandqtree}.
\end{proof}

\subsection{Quasi-trees from domains of type II}
\label{sec_quasitrees_b}
In this subsection, we prove an analogue of Proposition \ref{typei} for domains of type II. 

Let $H<G$ be the subgroup provided by Proposition \ref{typei}. 
Let $\mathcal V$ be an $H$-representative set of $\s^{II}$ such that $1\in \pr_V$ for any $V\in \mathcal V$. For $V\in\mathcal V$ we write $[V]$ to mean its $H$-orbit. By colorability and \cite[Lemma 3.4]{HP22}, every $[V]$ is an $H$-projection system with constant $s_0+4E$. 

Fix any $V\in\mathcal V$. By property QT$_0$ of the action $\stab_G(V)\curvearrowright \C V$, there exist quasi-trees $T_V^i$ along with $\stab_H(V)$-equivariant maps $\iota^i_V:\C V\to T_V^i$ for $i=1,\dots,n_V$ such that 
\[\prod_{i=1}^{n_V}\iota_V^i:\C V\to \prod_{i=1}^{n_V}T_V^i\]
is a $(\lambda',c')$-quasi-isometric embedding. In particular, $\iota^i_V$ is $(\lambda',c')$-coarsely Lipschitz. Fix $i\in\{1,\dots,n_V\}$. It is conventional to extend the map $\iota^i_V$ on $[V]$ in an $H$-equivariant way. This means that we can construct a collection of quasi-trees $\bT_{[V]}^i = \{T_U^i~|~U\in[V]\}$ with an $H$-action and a collection of coarsely Lipschitz maps $\{\iota_U^i:\C U\to T_U^i~|~U\in[V]\}$ such that the following diagram commutes for any $h\in H$ and $U\in [V]$. 

\[\xymatrix{
\C U \ar[r]^h \ar[d]_{\iota_U^i} & \C hU \ar[d]^{\iota_{hU}^i} \\
T_U^i \ar[r]^h & T_{hU}^i
}\]

Define $\Pi_{T_{hU}^i}(T_U^i):=\iota_{hU}^i(\rho^U_{hU})$ for any $U\in [V]$ and $hU\ne U$. Clearly, these projections are $H$-equivariant and the projection axioms pass to the collection $(\bT_{[V]}^i,\{\Pi_{T_U^i}\})$ under coarsely Lipschitz maps $\{\iota_U^i\}$. 
We modify the projections within an error $\epsilon$ such that $(\bT_{[V]}^i,\{\Pi'_{T_U^i}\})$ satisfies the strong projection axioms with constant $\zeta=\zeta(s_0,\lambda',c',E)$. For any $K''\ge 4\zeta$, $\C_{K''}\bT_{[V]}^i$ is a quasi-tree by Theorem \ref{bbf}. Define $d_{T^i_U}(1,h):=|\iota^i_U(\pi_U(1))-\iota^i_U(\pi_U(h))|$ for any $U\in[V]$. 
For any $V\in \mathcal V$ and $i\in\{1,\dots,n_V\}$, fix a base point $o_V^i\in T_V^i$. 
The following proposition is an analogue of Proposition \ref{typei}. 

\begin{prop}\label{typeii}
    There exists a constant $K''=K''(\zeta,\lambda',c',E)$ such that if $R\ge 2K''$ then
    \begin{align*}
        \sum_{U\in\s^{II}}\ignore{d_{U}(1, h)}{R} \preceq \sum_{V\in\mathcal V}\sum_{i=1}^{n_V}d_{\C_{K''}\bT_{[V]}^i}(o_V^i,ho_V^i).
    \end{align*} 
    for any $h\in H$. 
\end{prop}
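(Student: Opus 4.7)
The plan is to follow the pattern of Lemma \ref{thickandqtree}, with the projection $\Pi'_\gamma$ to a quasi-axis replaced by the modified Bestvina--Bromberg--Fujiwara projection $\Pi'_{T_U^i}$ attached to the quasi-tree collection $\bT^i_{[V]}$. First I decompose the left-hand sum by $H$-orbits,
\[
\sum_{U\in\s^{II}}\ignore{\dist_U(1,h)}{R} \;=\; \sum_{V\in\mathcal V}\sum_{U\in[V]}\ignore{\dist_U(1,h)}{R},
\]
and treat each orbit $[V]$ separately. Because the conclusion is a $\preceq$ estimate and $\mathcal V$ is finite, I may change each base point $o_V^i$ by a uniformly bounded amount at the cost of absorbable additive constants; I therefore assume without loss of generality that $o_V^i=\iota_V^i(\pi_V(1))$, which makes the special case $U=V$ vanish cleanly below. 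Setting $\tilde D_{T_U^i}(1,h):=\dist_{T_U^i}(o_V^i,ho_V^i)$ in the sense of Theorem \ref{distfor2}, the quasi-tree distance formula for $\C_{K''}\bT_{[V]}^i$ (valid once $K''\ge 4\zeta$) delivers the clean upper bound
\[
\sum_{U\in[V]}\ignore{\tilde D_{T_U^i}(1,h)}{K''} \;\le\; 4\,\dist_{\C_{K''}\bT^i_{[V]}}(o_V^i,ho_V^i).
\]

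The core estimate is a triangle-type inequality $\dist_{T_U^i}(1,h)\le \tilde D_{T_U^i}(1,h)+2M$ for a uniform $M$. Writing
\[
\delta^-_{T_U^i}:=|\iota_U^i(\pi_U(1))-\pi_{T_U^i}(o_V^i)|,\qquad \delta^+_{T_U^i}:=|\pi_{T_U^i}(ho_V^i)-\iota_U^i(\pi_U(h))|,
\]
both errors vanish when $U=V$ (by the base-point choice together with the convention $\pi_{T_V^i}(o_V^i)=o_V^i$) and when $U=hV$ (by $H$-equivariance of $\iota$ and of $\Pi'$). In the remaining cases colorability forces $U\trans V$; since $1\in\phi_V(\pr_V)$, the consistency axiom gives $\dist_U(\pi_U(1),\rho^V_U)\le E$; pushing this through the $(\lambda',c')$-coarse-Lipschitz map $\iota_U^i$ and absorbing the modification error $\epsilon$ from Theorem \ref{strong axioms} produces the required uniform bound $\delta^-_{T_U^i}\le M$. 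The $\delta^+$ bound follows from the same argument at $h^{-1}U$. Uniformity across $\mathcal V$ is automatic from finiteness, and equivariance propagates constants along each orbit.

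Combining this with property (QT') (the map $\prod_i\iota_U^i:\C U\to\prod_iT_U^i$ is a $(\lambda',c')$-quasi-isometric embedding uniformly across $U\in[V]$) gives
\[
\dist_U(1,h)\;\le\;\lambda'\sum_{i=1}^{n_V}\tilde D_{T_U^i}(1,h)+C,
\]
with $C:=2\lambda' MN+c'$ and $N:=\max_{V\in\mathcal V} n_V$. The final step is a threshold argument: choosing $K''\ge 4\zeta$ large enough in the structural data that $R\ge 2K''$ forces both $R\ge 2C$ and $R>\lambda'NK''+C$, I argue that whenever $\dist_U(1,h)\ge R$ the first yields $\dist_U(1,h)\le 2\lambda'\sum_i\tilde D_{T_U^i}(1,h)$, while the second forces some $\tilde D_{T_U^i}(1,h)>K''$, so $\sum_i\ignore{\tilde D_{T_U^i}(1,h)}{K''}\ge K''$ and the additive $2\lambda' n_VK''$ produced by thresholding $\tilde D_{T_U^i}(1,h)\le \ignore{\tilde D_{T_U^i}(1,h)}{K''}+K''$ is absorbed into that sum. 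This delivers
\[
\ignore{\dist_U(1,h)}{R}\;\le\;2\lambda'(1+N)\sum_{i=1}^{n_V}\ignore{\tilde D_{T_U^i}(1,h)}{K''},
\]
and summing first over $U\in[V]$ via the quasi-tree distance formula and then over $V\in\mathcal V$ yields the required $\preceq$ inequality.

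The main obstacle will be the threshold bookkeeping in the final step: arranging $K''$ (depending only on $\zeta,\lambda',c',E$ together with structural data such as $N$) so that the single constraint $R\ge 2K''$ simultaneously absorbs the additive constant $C$ coming from property (QT') and forces at least one $\tilde D_{T_U^i}(1,h)$ past $K''$ whenever $\dist_U(1,h)\ge R$. A secondary subtlety is the distance-formula convention $\pi_{T_V^i}(o_V^i)=o_V^i$ in the case $U=V$, where $\tilde D_{T_V^i}(1,h)$ is a genuine $T_V^i$-distance rather than a diameter of projected sets; the choice $o_V^i=\iota_V^i(\pi_V(1))$ makes this case evaporate without further work.
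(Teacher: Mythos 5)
Your proposal follows essentially the same route as the paper's proof of Proposition \ref{typeii}: decompose $\s^{II}$ into $H$-orbits; bound the defect between $\iota^i_U(\pi_U(\cdot))$ and the BBF projections $\Pi'_{T_U^i}(\cdot)$ via consistency, using $1\in\pr_V$ together with the coarse-Lipschitz constants $\lambda',c'$ and the modification error $\epsilon$; then use the triangle inequality together with Lemma \ref{threshold_estimate} to pass to the quasi-tree distance formula (Theorem \ref{distfor2}); and finally invoke the QT' embedding. Your explicit treatment of the cases $U=V$ and $U=hV$ and the base-point normalization $o_V^i=\iota_V^i(\pi_V(1))$ are more detailed than what the paper writes out, but equivalent in content --- note though that for $U=V$ with $hV\neq V$ the error $\delta^+_{T_V^i}$ is merely bounded by $\lambda'E+c'+\epsilon$, not zero, which is all that is used anyway.

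There is, however, a real slip in the final threshold bookkeeping, which you yourself flag as the main obstacle: the claim that the single constraint $R\ge 2K''$ forces $R>\lambda'NK''+C$ is impossible once $\lambda'N\ge 2$, since it would require $(2-\lambda'N)K''>C$. Moreover the ``force some $\tilde D_{T_U^i}(1,h)>K''$'' maneuver is unnecessary. The clean version: from the QT' embedding,
\[
\dist_U(1,h)\le \lambda'\sum_{i}\ignore{\tilde D_{T_U^i}(1,h)}{K''}+\lambda'NK''+c',
\]
so if one simply requires $R\ge 2(\lambda'NK''+c')$ then $\dist_U(1,h)\ge R$ yields $\ignore{\dist_U(1,h)}{R}\le 2\lambda'\sum_i\ignore{\tilde D_{T_U^i}(1,h)}{K''}$ directly. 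The fix to the statement is to absorb $N$, $\lambda'$, $c'$ into the announced constant, i.e.\ take the stated $K''$ to be at least $\lambda'NK''_0+c'$ where $K''_0$ is the constant used to build $\C_{K''_0}\bT^i_{[V]}$; since $N=\max_V n_V$ is finite and determined by the chosen QT' embeddings, this is compatible with the dependency $K''=K''(\zeta,\lambda',c',E)$. The paper's own proof glosses over exactly this step when it says ``we conclude by summing the inequality over $i$,'' so this is less a gap in your argument than a shared imprecision in the constant-tracking.
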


\begin{proof}
    Fix $V\in \mathcal V$ and $i\in\{1,\dots,n_V\}$. 
    For any $U\in [V]-\{V\}$, we have $|\iota^i_U(\pi_U(1))- \Pi_{T^i_U}'(o_V^i)| \le \lambda'|\pi_U(1)-\rho^V_U|+c'+\epsilon\le \lambda' E+c'+\epsilon$. Let $K''> 4\zeta+2(\lambda' E+c'+\epsilon)$. Similarly to the proof of Lemma \ref{thickandqtree}, we estimate that
    \begin{align*}
        \sum_{U\in[V]}\ignore{d_{T^i_U}(1,h)}{R} &\preceq \ignore{|\Pi_{T^i_U}'(o_i)-\Pi_{T^i_U}'(ho_i)|}{K''}\\
        &\preceq d_{\C_{K''}\bT_{[V]}^i}(o_V^i,ho_V^i).
    \end{align*}
    Here the first inequality follows from the triangle inequality and Lemma \ref{threshold_estimate}, and the second holds by Theorem \ref{distfor2}. Since the map $\prod_{i=1}^{n_U}\iota_U^i:\C U\to \prod_{i=1}^{n_U}T_U^i$ is a quasi-isometric embedding for any $U\in \s^{II}$, we conclude by summing the inequality over $1\le i\le n_V$ for all $V\in\mathcal V$. 
\end{proof}

\subsection{Proof of Theorem \ref{maintheorem}}
    \begin{proof}
    Let $R>0$ be large enough to satisfy Proposition \ref{typei}, Proposition \ref{typeii} and Theorem \ref{thm.thick.dist}. Let $T\ge R+2D_0$.
    By Proposition \ref{typei} and Proposition \ref{typeii}, there exists quasi-trees $\T_1,\dots,\T_m$ such that $H$ acts on $\prod_{k=1}^m \T_k$ diagonally and for any choice of base points $o_k\in \T_k$ and any $h\in H$,
    \begin{align*}
        \sum_{V\in\s^I}d_{V}^{T,R}(1, h)+\sum_{U\in\s^{II}}\ignore{d_{U}(1, h)}{R} \preceq \sum_{k=1}^md_{\T_k}(o_k, ho_k).
    \end{align*}

    By definition of thick distance, $d_{U}^{T,R}(1, h)\le \ignore{d_{U}(1, h)}{R}$. Thus, by Theorem \ref{thm.thick.dist}, 
    \[\sum_{V\in\s}d_{V}^{T,R}(1, h) \preceq \sum_{k=1}^md_{\T_k}(o_k, ho_k)\]
    for any $h\in H$.
    
    On the other hand, the orbit map from $H$ to $\prod_{k=1}^m \T_k$ is coarsely Lipschitz since $H$ is finitely generated. Therefore, $H$ embeds quasi-isometrically into $\prod_{k=1}^m \T_k$, which means that $H$ has property QT$_0$. Finally, we know that $G$ has property QT since property QT passes to any finite-index supergroup. 

    For the ``moreover'' part, first note that $G$ is coarse median for the same reason as \cite[Theorem 7.3]{BHS19}. The rest of the proof is just a combination of \cite{HP22,Pet21}. The proof of \cite[Proposition 3.9]{HP22} can be naturally generalized to quasi-trees from domains of type II. The proof in \cite[\S 3]{Pet21} for mapping class groups can be repeated verbatim to deal with quasi-trees from domains of type I. 
    \end{proof}

    For relative HHGs with only type II domains, we obtain the following stronger theorem.

    \begin{thm}\label{typeIIthm}
        Let $(G,\s)$ be a relative HHG that is colorable. If every $U\in \s$ is of type II, then $G$ has property QT$_0$. 
    \end{thm}

    \begin{proof}
        The proof is a simplified version of that of Theorem \ref{maintheorem}. Since $G$ is colorable and every domain is of type II, the finite-index subgroup $H$ in the above proof can be replaced with $G$ itself. This gives QT$_0$ rather than just QT.
    \end{proof}

\section{A criterion for having separable quasi-axes}
\label{neatker}

In this section, we provide a criterion for a relative HHG to have separable quasi-axes that is easy to use in application. 
For an acylindrical action on a hyperbolic space, we have seen in Section \ref{back1} that the elementary closure of any loxodromic element is a \emph{virtual centralizer}. Inspired by the discussion in \cite[\S 4.3]{BBF21}, the following lemma generalizes this fact. In general, we cannot expect the elementary closure to be a centralizer as in \cite[\S 4.3]{BBF21}, because it may contain a flip. 

\begin{lem}
\label{centralizer}
Let $G$ be a group acting on a $\delta$-hyperbolic space $X$ with an acylindrical image. Let $K$ be the kernel of this action. Assume that there is a subgroup $H<G$ such that $[H,K]=1$ and $H$ is mapped to a finite-index subgroup of $G/K$. Then for any loxodromic element $g\in G$, the elementary closure $EC_G(g)$ is a virtual centralizer in $G$ of some loxodromic element in $H$. 
\end{lem}

\begin{proof}
    Let $\bar G=G/K$, and let $\theta:G\to \bar G$ be the quotient map. 
    For any $g\in G$, denote the image $\theta(g)\in \bar G$ by $\bar g$. 
    
    Let $g\in G$ be a loxodromic element. Since $\theta(H)$ is a finite-index subgroup of $\bar G$, we can choose $h\in H$ and $n>0$ such that $\bar h=\bar g^n$. Since $\bar G$ acts acylindrically, $EC_{\bar G}(\bar g)=EC_{\bar G}(\bar h)$, which contains the cyclic subgroup $\langle \bar h \rangle$ as a finite-index subgroup. By definition, $EC_G(g)=\theta^{-1}(EC_{\bar G}(\bar g))$. 
    Thus, the preimage $\theta^{-1}(\langle \bar h \rangle)=K\cdot \langle h \rangle$ is a finite-index subgroup of $EC_G(g)$. 

    On the other hand, any element in $G$ that commutes with $h$ has an image in $\bar G$ that commutes with $\bar h$. Thus, $C_G(h)< EC_G(g)$. Note that $K\cdot \langle h \rangle<C_G(h)$ since $[H,K]=1$. In conclusion, $C_G(h)$ is a finite-index subgroup of $EC_G(g)$. 
\end{proof}

\begin{defn}
    For a relative HHG $(G,\s)$, we say a domain $V\in\s$ has \emph{neat kernel} if there exists a subgroup $H_V<\stab_G(V)$ such that $[H_V,\ker(\theta_V)]=1$ and $\theta_V$ maps $H_V$ to a finite-index subgroup of $G_V$. 
\end{defn}

\begin{prop}
\label{hhg_sep}
    Let $(G,\s)$ be a relative HHG that is residually finite. Let $V\in \s$. If $V$ has hyperbolicity, acylindrical image and neat kernel, then $V$ has separable quasi-axes. 
\end{prop}

\begin{proof}
    Let $g\in\stab_G(V)$ be a loxodromic element. Since $V$ has neat kernel, $EC(g)$ is a virtual centralizer in $\stab_G(V)$ of some loxodromic element $h\in H_V$ by Lemma \ref{centralizer}. Every element that commutes with $h$ stabilizes $V$. Therefore, $EC(g)$ is a virtual centralizer of $h$ in $G$. It is known that a centralizer in a residually finite group is separable (see \cite[Lemma 2.1]{BBF21} or the proof of \cite[Proposition]{Lon87}). It is also known that a finite-index supergroup of a separable subgroup is still separable (easy to see from the profinite topology). Therefore, $V$ has separable quasi-axes. 
\end{proof}

\begin{cor}
\label{cor_maxdomain}
    Let $(G,\s)$ be a relative HHG that is residually finite. Let $S\in \s$ be the unique maximal domain. Then $S$ is of type I.
\end{cor}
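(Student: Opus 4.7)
The plan is to verify each of the four Type I conditions for $V=S$ in turn; the first three are essentially structural, while the fourth is the substantive step, to be reduced to Corollary \ref{hhg_sep} after a finite-index adjustment. Hyperbolicity of $\C S$ is immediate since $(\X,\s)$ has complexity at least $2$ and so $S$ is not $\nest$-minimal. Coboundedness of $G_S$ on $\F_S$ follows because $\F_S$ coarsely coincides with $\X$ and $\Stab_G(S)=G$ already acts coboundedly on $\X$ by the relative HHG axioms. The acylindrical image condition, that $G_S\curvearrowright \C S$ is acylindrical, is the standard acylindricity of the action on the top-level hyperbolic space, which I will cite.

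The substantive condition is separable quasi-axes, and the strategy is as follows. First, observe that $K:=\Ker(\theta_S)$ is finite: an element $k\in K$ is trivial in $\Aut(\s)$, and by the realization theorem it moves each $x\in\X$ a uniformly bounded amount, so metric properness of $G\curvearrowright \X$ forces $|K|<\infty$. Second, use residual finiteness of $G$ to pick a finite-index normal subgroup $G''\trianglelefteq G$ with $G''\cap K=\{1\}$ (intersect finite-index subgroups separating each nontrivial element of $K$ from $1$, then intersect with the finitely many $G$-conjugates). Then $(G'',\s)$ is a residually finite relative HHG, $\theta_S|_{G''}$ is injective, and $\theta_S(G'')$ still acts acylindrically on $\C S$ as a finite-index subgroup of $G_S$. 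In $(G'',\s)$ the $\theta_S$-kernel is trivial, so $S$ has neat kernel via the trivial decomposition $G''=G''\times_{\{1\}}\{1\}$, and Corollary \ref{hhg_sep} applies to give that $EC_{G''}(g'')$ is separable in $G''$ for every loxodromic $g''\in G''$ on $\C S$.

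To transfer separability back to $G$, given a loxodromic $g\in G$ on $\C S$, choose $n\ge 1$ with $g^n\in G''$ and note that $EC_{G''}(g^n)=EC_G(g)\cap G''$ is a finite-index subgroup of $EC_G(g)$ (using $EC_G(g)=EC_G(g^n)$). Separability in $G''$ passes to separability in $G$ because $[G:G'']<\infty$, and then $EC_G(g)$, being a finite union of profinitely closed translates of the separable set $EC_{G''}(g^n)$, is itself separable in $G$. The only mildly delicate point is that this detour through $G''$ seems to be needed: in general $K$ need not be central in $G$, so one cannot directly set up the amalgamated direct product of the neat-kernel definition on $\Stab_G(S)=G$, and the residual-finiteness reduction is the natural workaround.
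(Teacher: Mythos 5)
Your proof is correct and reaches the same conclusion, but it takes a longer route on the substantive step (separable quasi-axes) than the paper does. The paper observes that for the maximal domain $S$, the restriction homomorphism $\theta_S:\Aut(\s;S)\to\Aut(\s_S)$ is essentially the identity map, because $\Aut(\s;S)=\Aut(\s)$ (every automorphism fixes the maximal element) and $\s_S=\s$. Under the standing convention that $G<\Aut(\s)$ — which the paper uses throughout when writing $\Stab_G(V)=G\cap\Aut(\s;V)$ — this forces $\Ker(\theta_S)=\{1\}$ outright, not merely finite. Thus $\Stab_G(S)=G=G\times_{\{1\}}\{1\}$ is trivially an amalgamated direct product with the prescribed kernel, $S$ has neat kernel, and Corollary \ref{hhg_sep} applies directly with no finite-index reduction.

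Your approach handles the more general scenario in which the HHG action $G\to\Aut(\s)$ could have a nontrivial (necessarily finite, by properness) kernel: you pass to a finite-index normal $G''$ intersecting the kernel trivially, apply Corollary \ref{hhg_sep} there, and transfer separability of $EC_{G''}(g^n)$ back to separability of $EC_G(g)$ via the profinite-topology facts (finite-index subgroups are profinitely closed, cosets and finite unions of closed sets are closed). The transfer argument is correct, and so is the identification $EC_{G''}(g^n)=EC_G(g)\cap G''$. However, this robustness is not needed for the corollary as stated: given the paper's conventions, the kernel is trivial and the detour through $G''$ can be removed. If you do adopt the faithful-action reading, your observation in the final sentence (that $K$ need not be central, hence the amalgamated decomposition cannot be set up directly) becomes moot, since $K=\{1\}$ is always central. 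The remaining three conditions (hyperbolicity since complexity $\ge 2$ makes $S$ non-minimal, coboundedness since $\F_S$ is coarsely $\X$, acylindricity from \cite[Thm.~14.3]{BHS17b}) match the paper's argument.
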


\begin{proof}
    By \cite[Theorem 14.3]{BHS17b}, $G$ acts on $\C S$ acylindrically. Now $\ker(\theta_V)$ is trivial so $S$ has neat kernel. Thus, $S$ has separable quasi-axes by Proposition \ref{hhg_sep}. Moreover, $S$ has cobounded nested region because $G$ acts on $\X$ coboundedly. In conclusion, $S$ is of type I. 
\end{proof}

\section{Applications}
\label{sec_apps}

\subsection{Mapping class groups}

In this subsection, we explain how Theorem \ref{maintheorem} applies to mapping class groups to recover the following theorem. 

\begin{thm}\cite[Theorem 1.2]{BBF21}
    Mapping class groups of finite-type surfaces have property QT.
\end{thm}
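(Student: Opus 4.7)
The plan is to verify that the mapping class group $G = \MCG(S)$ of a finite-type surface satisfies the hypotheses of Theorem \ref{maintheorem}: namely, that $G$ carries a relative HHG structure $(G,\s)$, is virtually colorable, and every domain in $\s$ is of Type I. Since mapping class groups admit their standard HHG structure from Behrstock--Hagen--Sisto (with $\s$ the set of isotopy classes of essential subsurfaces of $S$, nesting given by inclusion, orthogonality by disjointness, and $\C U$ the curve graph of $U$), we already have the first ingredient. Virtual colorability is exactly the content of \cite[\S 5]{BBF15}, which produces a finite-index subgroup of $\MCG(S)$ under which the set of (isotopy classes of) subsurfaces decomposes into finitely many pairwise-transverse orbits.

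It then remains to confirm that every $U \in \s$ is of Type I. Hyperbolicity of each curve graph $\C U$ is due to Masur--Minsky, and the acylindricity of the $G_U$--action on $\C U$ is Bowditch's theorem. The coboundedness of the $G_U$--action on the nested region $\F_U$ follows from the fact that $\F_U$ is coarsely identified with the mapping class group of $U$ (or a finite quotient thereof) acting on its own HHS, so this is built into the HHS structure. The only non-trivial remaining condition is the separability of quasi-axes, and here I would invoke Corollary \ref{hhg_sep}: since $\MCG(S)$ is residually finite (a classical result of Grossman), it suffices to verify that each domain $U$ has neat kernel.

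For neat kernel: given a subsurface $U \subset S$, the stabilizer $\Stab_G(U)$ is (up to finite index accounting for permutations of components of $U$ and $S \setminus U$) generated by $\MCG(U)$ and $\MCG(S \setminus U)$, whose intersection is the abelian subgroup generated by Dehn twists on the boundary curves of $U$, and which commute with each other because they have disjoint support. This is precisely the amalgamated direct product decomposition $\Stab_G(U) = H_U \times_{Z_U} K_U$ with $H_U = \MCG(U)$, $K_U = \MCG(S \setminus U) = \Ker(\theta_U)$, and $Z_U$ the boundary Dehn twist subgroup; the restriction $\theta_U$ sends $H_U$ onto $G_U$. So every $U$ has neat kernel, and the remaining case of the maximal domain $S$ is handled by Corollary \ref{cor_maxdomain}.

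The main obstacle, in principle, is the neat kernel condition, because amalgamated direct product decompositions of subsurface stabilizers have to be set up carefully (passing to a finite-index subgroup of $G$ that fixes each component and each boundary curve pointwise, so that the component-permuting finite group is removed and the two factors genuinely commute). Since property (QT) is commensurably invariant, working inside such a finite-index subgroup is harmless. Once neat kernel is in place, all four Type I conditions hold for every $U \in \s$, Theorem \ref{maintheorem} applies, and $\MCG(S)$ has property (QT).
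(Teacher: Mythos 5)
Your approach is essentially the same as the paper's: use the Behrstock--Hagen--Sisto HHS structure on $\M(\Sigma)$, virtual colorability from \cite[\S 5]{BBF15}, residual finiteness, Bowditch acylindricity, and the neat kernel condition via Corollary \ref{hhg_sep} with the amalgamated direct product $\Stab_G(U) = \MCG(U) \times_{Z_U} \MCG(\Sigma \setminus U)$ coming from the Farb--Margalit inclusion and cutting homomorphisms. The one imprecision is that you claim every domain $U \in \s$ is of Type I, whereas your actual argument (the identification $\F_U \simeq \M(U)$ and the $\MCG(U)/\MCG(\Sigma\setminus U)$ decomposition) only applies to connected, non-annular subsurfaces. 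The paper handles this more cleanly: it observes that one only needs to verify Type I for those $V$ whose curve graph is \emph{not} a quasi-tree, since any domain with $\C V$ a quasi-tree is automatically Type II (a single quasi-tree with the given action is trivially a (QT') embedding). This discards annuli, pants, and disconnected subsurfaces from the Type I verification, and those are exactly the cases where your $\MCG(U)$/$\MCG(\Sigma\setminus U)$ picture does not directly apply. The fix is trivial, so the proposal is correct in spirit, but you should state explicitly that annular and disconnected subsurfaces fall into Type II rather than asserting a uniform Type I claim.
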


\begin{proof}
Let $\Sigma$ be a of finite-type surface, i.e. a closed oriented surface with finitely many marked points. Let $\M(\Sigma)$ be its marking complex \cite{MM00}. Let $\s$ be the collection of isotopy classes of essential non-pants closed subsurfaces of $\Sigma$, where disconnected subsurfaces are also allowed. Given any $V\in\s$, let $\hat V$ be the surface obtained by gluing a once-punctured disk to each boundary component of $V$. Let $\C V$ be the curve graph of $\hat V$. Here the curve graph of a disconnected surface is defined as the join of the curve graphs of its components, and thus is bounded. It is known that $(\M(\Sigma),\s)$ is an HHS. Moreover, the mapping class group $\mcg(\Sigma)$ is an HHG by virtue of its action on $(\M(\Sigma),\s)$ \cite[\S 11]{BHS19}. 

For any $V\in\s^{\circ}$, let $V^{\orth}$ be the closure of $\Sigma-V$ in $\Sigma$, and let $\mcg(\Sigma;V)<\mcg(\Sigma)$ be the stabilizer of $V$. Let $\eta_V:\mcg(V)\to \mcg(\Sigma)$ be the homomorphism induced by the inclusion $V\hookrightarrow \Sigma$. Denote the image of $\eta_V$ by $\overline\mcg(V)$. 
By \cite[Theorem 3.18]{FM12}, $\overline\mcg(V^{\orth})$ is exactly the kernel of the restriction homomorphism $\theta_V: \mcg(\Sigma;V)\to G_V$, where $G_V$ is a finite-index subgroup of $\mcg(\hat V)$. Moreover, it is clear that $\overline\mcg(V)$ commutes with $\overline\mcg(V^{\orth})$ and $\theta_V$ maps $\overline\mcg(V)$ to a finite-index subgroup of $G_V$. Therefore, $V$ has neat kernel.

It is known that $\mcg(\Sigma)$ is virtually colorable \cite[\S 5]{BBF15} and residually finite \cite{Gro74}. We only need to prove that every unbounded domain $V\in \s$ is of type I. First, it is clear that $\F_V$ is coarsely $\M(V)$. Since $\mcg(\hat V)$ acts coboundedly on $\M(V)$ and acts acylindrically on $\C \hat V$ \cite{Bow08}, $V$ has cobounded nested region and acylindrical image. Furthermore, $V$ has separable quasi-axes by the above discussion and Proposition \ref{hhg_sep}. Therefore, $V$ is of type I. 

In conclusion, mapping class groups of finite-type surfaces have property QT by Theorem \ref{maintheorem}. 
\end{proof}

Certain quotients of a mapping class group are again HHGs, as proved in \cite{BHS17a} and \cite{BHMS24}. In particular, the quotient by the normal closure of a suitable power of a pseudo-Anosov element or by the normal closure of suitable powers of all Dehn twists is again an HHG. It would be interesting to determine whether these quotient groups are still residually finite, thereby satisfying the assumption of Theorem \ref{maintheorem}.

\subsection{Admissible graphs of groups}

\emph{Admissible groups} were introduced by Croke--Kleiner in \cite{CK02}, which generalize the fundamental groups of non-geometric $3$-dimensional graph manifolds. 

\begin{defn}
\label{defn:admissible}
Let $\G=(\Gamma,\{G_v\},\{G_e\})$ be a graph of groups. We say $\mathcal{G}$ is \emph{admissible} if the following hold:
\begin{enumerate}
    \item $\Gamma$ is a finite graph with at least one edge.
    \item Each vertex group ${ G}_v$ has center $Z_v \cong \Z$, and ${ H}_v := { G}_{v} / Z_v$ is a non-elementary hyperbolic group.
    \item Every edge group ${ G}_{e}$ is isomorphic to $\Z^2$.
    \item If $e$ is an edge with $v=e^+$ and $w=e^-$, and $\tau_{e},\tau_{\bar e}$ are the edge monomorphisms, then the subgroup $\langle \tau_{e}^{-1}(Z_{v}),\tau_{\bar e}^{-1}(Z_{w})\rangle$ has finite index in ${ G}_e$.
    \item Let $e_1$ and $e_2$ be distinct edges entering a vertex $v$, and let $K_i \subset { G}_v$ be the image of the edge homomorphism $\tau_{e_i}$ for $i = 1,2$. Then
    \begin{itemize}
        \item for every $g \in G_v$, $gK_{1}g^{-1}$ is not commensurable with $K_2$;
        \item for every $g \in  G_v - K_i$, $gK_ig^{-1}$ is not commensurable with $K_i$. 
    \end{itemize} 
\end{enumerate}
A group $G$ is \emph{admissible} if it is the fundamental group of an admissible graph of groups.
\end{defn}

Every admissible group $G$ has a (combinatorial) HHG structure by \cite[Theorem 1.4]{HRSS25}. According to the classification of simplices by \cite[Lemma 6.2]{HRSS25}, if $\Delta\orth g\Delta$, where $\Delta$ corresponds to an unbounded hyperbolic space, then $\Delta$ is of type $8$ and $g$ exchanges two adjacent vertices in the Bass-Serre tree (see \cite[Definition 1.11]{BHMS24} for definition of orthogonality in a combinatorial HHS). Therefore, it is easy to see that $G$ has a subgroup of index at most $2$ that is colorable (see \cite[Lemma 4.6]{NY23a} for example). This shows the virtual colorability of $G$. Thus, every non-geometric graph manifold group has a virtually colorable HHG structure with all associated hyperbolic spaces being quasi-trees. Thus, non-geometric graph manifold groups have property QT by \cite[Theorem 3.1]{HP22} or Theorem \ref{maintheorem}. However, in the HHG structure of an admissible group, associated hyperbolic spaces are not necessarily quasi-trees. As an application of Theorem \ref{maintheorem}, we show that property QT still holds true in this case if we assume $G$ to be residually finite. 

\begin{thm}\label{admissibleQT}
    Let $\G=(\Gamma,\{G_v\},\{G_e\})$ be an admissible graph of groups, and let $G=\pi_1\G$. If $G$ is residually finite, then $G$ has property QT.
\end{thm}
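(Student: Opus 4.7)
The plan is to apply Theorem \ref{maintheorem}. The combinatorial HHG structure on $G$ is provided by \cite[Theorem 1.4]{HRSS24}, and the virtual colorability of this structure is already recorded in the paragraph preceding the theorem. Since property (QT) is commensurably invariant, I pass to the colorable finite-index subgroup and, by abuse of notation, continue to call it $G$. It then remains to verify that every unbounded domain $U\in\s$ is of Type I or Type II, so that Theorem \ref{maintheorem} yields the conclusion.

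The $\nest$-maximal domain is of Type I by Corollary \ref{cor_maxdomain}, which uses residual finiteness directly. For the non-maximal unbounded domains I appeal to the classification of simplices in \cite[Lemma 6.2]{HRSS24}: up to the $G$-action, each such domain $V$ corresponds to a vertex $v$ of the Bass-Serre tree (a Type $8$ simplex), with $\Stab_G(V) = G_v$, with $\C V$ hyperbolic and carrying an acylindrical action of the non-elementary hyperbolic quotient $H_v = G_v/Z_v$, and with the restriction homomorphism $\theta_V$ realising $G_V = H_v$. Edge-type simplices yield domains with bounded $\C U$ and play no role.

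For each such $V$ the four conditions of Type I will be verified as follows. Hyperbolicity of $\C V$ and acylindricity of $H_v\curvearrowright \C V$ come from the admissibility axioms combined with \cite{Bow08}, since $\C V$ is uniformly quasi-isometric to a Cayley graph of the hyperbolic group $H_v$. Cobounded nested region follows from $G_v$ acting cocompactly on $\F_V$ in the combinatorial HHG structure. For separable quasi-axes I plan to invoke Corollary \ref{hhg_sep}: the stabilizer $\Stab_G(V) = G_v$ admits the amalgamated direct product decomposition $G_v = G_v\times_{Z_v}Z_v$, since $Z_v$ is central so $[G_v,Z_v]=1$, while $G_v\cap Z_v=Z_v=\Ker\theta_V$. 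Thus $V$ has neat kernel, and residual finiteness of $G$ together with Corollary \ref{hhg_sep} yields separable quasi-axes. Consequently $V$ is of Type I.

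The main obstacle I expect is the structural bookkeeping required to match every non-maximal unbounded domain with a Type $8$ simplex from \cite[Lemma 6.2]{HRSS24} and to confirm that, in the combinatorial HHG structure, $\C V$ is genuinely quasi-isometric to a Cayley graph of $H_v$ (so hyperbolicity and acylindricity transfer cleanly), along with ruling out pathological intermediate domains. Once those identifications are in place, the neat-kernel step is essentially formal and Theorem \ref{maintheorem} finishes the argument.
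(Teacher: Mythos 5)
Your high-level plan — invoke Theorem~\ref{maintheorem}, use the combinatorial HHG structure of \cite[Theorem 1.4]{HRSS24}, observe virtual colorability, and then verify the Type~I/II dichotomy domain by domain, with the neat kernel criterion (Corollary~\ref{hhg_sep}) doing the work for separable quasi-axes — is exactly the route the paper takes, and your neat-kernel decomposition $G_v = G_v \times_{Z_v} Z_v$ is the same observation the paper encodes in the line ``since $G_v$ is a central extension of $H_v$ by $Z_v$, $\Delta$ has neat kernel.''

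However, two of your structural identifications are wrong, and they feed into your justification of the remaining Type~I axioms. First, the simplices that require the Type~I verification are of Type~$7$ in the classification of \cite[Lemma 6.2]{HRSS24}, not Type~$8$; Type~$8$ is the class discussed in the colorability remark (they are the ones admitting orthogonal translates), and the paper's proof explicitly says ``any simplex that is not of Type~$7$ corresponds to a quasi-tree so it is a domain of Type~II,'' leaving only Type~$7$ to be checked. Second, and more substantively, for such a vertex-type simplex $\Delta$ the associated hyperbolic space $\C\Delta$ is \emph{not} quasi-isometric to a Cayley graph of $H_v$: it is the coned-off Cayley graph of $H_v$ viewed as a group hyperbolic relative to the images of the incident edge groups, while $\F_\Delta$ is coarsely $H_v$ itself. (Indeed, were $\C\Delta$ the ordinary Cayley graph of $H_v$ there would be no nontrivial domains nested in $\Delta$, which is incompatible with the edge-group flats appearing lower in the hierarchy.) Consequently, hyperbolicity holds because $H_v$ is hyperbolic relative to those peripheral subgroups, and acylindricity of $G_\Delta \curvearrowright \C\Delta$ should be sourced from \cite[Proposition 5.2]{Osi16} on relatively hyperbolic groups acting on their coned-off graphs, not from \cite{Bow08} (which concerns the curve complex, not Cayley graphs of hyperbolic groups). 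Your conclusion survives — the four Type~I conditions do hold — but the cited justification would not compile into a correct proof as written. Finally, your separate treatment of the maximal domain via Corollary~\ref{cor_maxdomain}, and the explicit passage to a colorable finite-index subgroup, are both harmless but redundant: the paper absorbs the maximal domain into the non-Type-$7$ quasi-tree case, and Theorem~\ref{maintheorem} is already stated for virtually colorable $(G,\s)$.
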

\begin{proof}
    According to the classification of simplices \cite[Lemma 6.2]{HRSS25}, any simplex that is not of type $7$ corresponds to a quasi-tree so it is a domain of type II. Thus, we only need to check that simplices of type $7$ are of type I. The stabilizer of such a simplex $\Delta$ is exactly a vertex group $G_v$ that acts on $\C(\Delta)$ with image $H_v=G_v/Z_v$. Now $\C(\Delta)$ is coarsely the hyperbolic space obtained by coning off $H_v$ as a relatively hyperbolic group, and $\F_{\Delta}$ is coarsely $H_v$ itself. Therefore, acylindrical image and cobounded nested region hold true (see \cite[Proposition 5.2]{Osi16} for acylindricity). Since $G_v$ is a central extension of $H_v$ by $Z_v$, $\Delta$ has neat kernel. Therefore, we conclude by Proposition \ref{hhg_sep} and Theorem \ref{maintheorem}. 
\end{proof}

There is another approach to property QT of non-geometric graph manifold groups in \cite{HNY25}. For graph manifolds with nonempty boundary, they actually prove in a more general setting. A \emph{Croke--Kleiner admissible group} (abbreviated as CKA group) is an admissible group that admits a geometric action on a complete proper CAT(0) space. As a corollary of Theorem \ref{admissibleQT}, we recover the following theorem. 

\begin{cor}\cite[Theorem 1.3]{HNY25}\label{CKA}
    Let $G$ be a CKA group where for every vertex $v$ the central extension $1\to Z_v\to G_v\to H_v\to 1$ has an omnipotent hyperbolic quotient group $H_v$. Then $G$ has property QT.
\end{cor}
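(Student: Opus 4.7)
The strategy is to reduce Corollary \ref{CKA} to Theorem \ref{admissibleQT}: since every CKA group is admissible by definition, it suffices to show that the hypothesis of omnipotent vertex quotients forces $G$ to be residually finite. Once that is established, property (QT) is delivered by Theorem \ref{admissibleQT} without any further input from the CAT(0) geometry.

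First I would recall that omnipotence is a strong form of residual finiteness: for any pairwise non-commensurable infinite-order elements $g_1,\dots,g_k$ in $H_v$, one can produce finite quotients of $H_v$ in which the images of the $g_i$ have prescribed orders (up to a common multiplier). In particular, each $H_v$ is residually finite. A short cohomological argument using that $Z_v$ is central in $G_v$ then yields residual finiteness of each vertex group $G_v$: after passing to a suitable finite-index subgroup of $H_v$, the restricted class in $H^2(-;\integers)$ becomes torsion, so the corresponding central extension of that subgroup admits finite quotients separating $Z_v$.

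Next, I would use omnipotence to arrange compatibility on edges. For each edge $e$ joining vertices $v$ and $w$, condition (4) of Definition \ref{defn:admissible} ensures that $\langle \tau_e^{-1}(Z_v), \tau_{\bar e}^{-1}(Z_w)\rangle$ has finite index in $G_e \cong \integers^2$. Viewing the generators of this rank-two lattice as specific elements of $G_v$ and of $G_w$ whose images in $H_v$ and $H_w$ are either trivial or infinite-order non-commensurable, omnipotence lets me choose finite quotients $\phi_v \colon G_v \to Q_v$ and $\phi_w \colon G_w \to Q_w$ whose kernels meet $G_e$ in the same finite-index subgroup. The prescribability of orders in omnipotent groups is exactly what makes these local choices simultaneously consistent across the finitely many edges of $\Gamma$.

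Finally, with compatible local quotients in hand, the standard assembly for graphs of finite groups (or equivalently, an action on a finite quotient of the Bass--Serre tree) produces a finite quotient of $G = \pi_1 \G$ that separates any prescribed element from the identity, establishing residual finiteness of $G$. Theorem \ref{admissibleQT} then gives property (QT). The main obstacle is the edge-compatibility step: mere residual finiteness of each $G_v$ would not suffice, because the rank-two edge groups must inject into the chosen local finite quotients in a coordinated fashion dictated by the positions of the centers $Z_v$ and $Z_w$. Omnipotence is precisely the hypothesis that provides enough freedom to prescribe matching orders on the two sides of every edge simultaneously, and this is why it enters the statement of Corollary \ref{CKA}.
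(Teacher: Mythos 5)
Your overall strategy — reduce to Theorem \ref{admissibleQT} by proving residual finiteness of $G$, and note that omnipotence (not mere residual finiteness of $H_v$) is what makes the edge-compatibility step work — matches the paper's. The gap is in your treatment of the vertex groups $G_v$. You claim that, because $H_v$ is omnipotent and hence residually finite, ``after passing to a suitable finite-index subgroup of $H_v$, the restricted class in $H^2(-;\integers)$ becomes torsion,'' so that the central extension $1\to Z_v\to G_v\to H_v\to 1$ virtually splits. This is false in general, even for omnipotent hyperbolic groups: take $H_v=\pi_1(\Sigma_g)$ for $g\ge 2$ and let the extension class be a generator of $H^2(\Sigma_g;\integers)\cong\integers$. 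Restriction to a finite-index subgroup multiplies the Euler number by the index, so the class is never torsion on any finite-index subgroup, and the corresponding central extension (a cocompact lattice in $\widetilde{\mathrm{PSL}(2,\reals)}$) never virtually splits. Omnipotence of $H_v$ alone cannot force the virtual splitting you need, and, more generally, central extensions do not automatically inherit residual finiteness (Deligne's non-residually-finite central extension of $\mathrm{Sp}(2n,\integers)$ illustrates the danger).

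The CAT(0) hypothesis that you explicitly set aside (``without any further input from the CAT(0) geometry'') is exactly what rescues this step. Since a CKA group acts geometrically on a complete proper CAT(0) space, each vertex group $G_v$ is itself a CAT(0) group, and the paper then invokes \cite[Theorem II.7.1]{BH99} to conclude that $1\to Z_v\to G_v\to H_v\to 1$ virtually splits. Only with virtual splitting in hand does Lemma \ref{CKARF} — the Hempel-style assembly, whose hypothesis explicitly includes virtual splitting together with omnipotence of $H_v$ — deliver residual finiteness of $G$. Your edge-compatibility discussion is consistent with the spirit of Hempel's argument, but it sits on top of the false local claim about the vertex extensions. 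To repair the proof, reinstate the CAT(0) input: deduce virtual splitting of each vertex extension from the splitting theorem for CAT(0) groups, and then apply Lemma \ref{CKARF} and Theorem \ref{admissibleQT}.
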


For definition of omnipotence, we refer the reader to \cite{Wis00}. Note that if every hyperbolic group is residually finite, then every hyperbolic group is omnipotent by \cite[Remark 3.4]{Wis00}. 
Under the assumption of Corollary \ref{CKA}, the central extension associated with any vertex virtually splits by \cite[Theorem II.7.1]{BH99}. Therefore, Theorem \ref{admissibleQT} implies Corollary \ref{CKA} due to the following lemma. 

\begin{lem}\label{CKARF}
    Let $G$ be an admissible group where for every vertex $v$ the central extension $1\to Z_v\to G_v\to H_v\to 1$ virtually splits and the hyperbolic quotient group $H_v$ is omnipotent. Then $G$ is residually finite. 
\end{lem}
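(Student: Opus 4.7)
The plan is to first establish residual finiteness of each vertex group $G_v$, then use omnipotence to produce compatible finite quotients of the $G_v$'s that match on edge groups, and finally assemble these via Bass-Serre theory. For Step 1, under the hypotheses each $G_v$ has a finite-index subgroup isomorphic to $\integers \times H_v'$ with $H_v' \le H_v$ of finite index. Omnipotence of $H_v$ applied to a single non-torsion element implies $H_v$ is residually finite, hence so are $H_v'$ and $\integers\times H_v'$. Residual finiteness passes to finite-index supergroups, so $G_v$ is residually finite.

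Step 2 is the heart of the argument. Given $g\in G\setminus\{1\}$, the aim is to build finite quotients $\phi_v\colon G_v\to Q_v$ that agree on each incident edge group $G_e\cong\integers^2$, since such a compatible family descends to a quotient of a finite-index subgroup of $G$. Fix a vertex $v$ and let $K_1,\dots,K_n\subset G_v$ be the images of the edge monomorphisms at $v$. Each $K_i\cap Z_v$ has finite index in $Z_v$ by condition (4) of Definition \ref{defn:admissible}, so the images $\bar K_i\subset H_v$ are infinite cyclic; by condition (5) they are pairwise non-commensurable and non-commensurable to their conjugates outside $K_i$. Omnipotence of $H_v$ then yields, for any choice of positive integers $(e_1,\dots,e_n)$, a finite quotient $H_v\to F_v$ in which each $\bar K_i$ has image of order a prescribed multiple of $e_i$. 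Combining with finite quotients of the $\integers$-factor in the virtual splitting of $G_v$, we obtain finite quotients of a finite-index subgroup of $G_v$ that restrict to any prescribed compatible finite abelian quotients on each $K_i$.

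For Step 3 (assembly), given $g\in G\setminus\{1\}$, the Bass-Serre normal form of $g$ is supported on finitely many edges of $\Gamma$. I would first choose prescribed finite abelian quotients of each edge group $G_e$ involved that together separate $g$ from the identity inside the relevant fundamental group of a finite subgraph of groups, then apply Step 2 at each vertex to obtain matching finite quotients $\phi_v$ of (finite-index subgroups of) the $G_v$'s, and finally extend these into a morphism from the graph of groups $\G$ to a graph of finite groups whose fundamental group is finite and in which the image of $g$ is nontrivial. Replacing by the normal core in $G$ promotes this to an honest finite quotient of $G$ separating $g$ from $1$.

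The main obstacle lies in Step 2 and its interaction with Step 3: one must prescribe the finite quotients on \emph{all} edge groups incident to a given vertex simultaneously, and the prescriptions must be mutually consistent across the finitely many edges in the support of $g$. Mere residual finiteness of the $H_v$'s would not suffice, because matching finite quotients on a $\integers^2$ edge group requires controlling the orders of cyclic images in both vertex quotients; omnipotence, with its prescribed-order conclusion on pairwise non-commensurable cyclic subgroups, is precisely what enables this matching. A secondary technical burden is bookkeeping the passage from finite-index subgroups of the $G_v$'s back to finite quotients of the full $G$ via normal cores, and verifying that the resulting graph of groups morphism really does preserve the nontriviality of $g$.
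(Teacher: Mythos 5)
The paper gives no proof of this lemma; it simply defers to Hempel's residual-finiteness argument for graph manifold groups \cite{Hem87} (see also \cite{Ngu24}). Your three-step reconstruction --- residual finiteness of each $G_v$, matching finite quotients on the $\integers^2$ edge groups via omnipotence, and assembly through the Bass--Serre structure --- is indeed that blueprint, and correctly identifies that omnipotence (rather than mere residual finiteness of the $H_v$) is what makes the edge-matching possible.

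Two local repairs are needed before the argument closes. First, the fundamental group of a nontrivial finite graph of finite groups is virtually free, not finite (Karrass--Pietrowski--Solitar/Stallings); since $\Gamma$ has at least one edge, your target $\pi_1$ is infinite. After constructing a morphism of graphs of groups under which $g$ survives, you must additionally invoke residual finiteness of virtually free groups to produce an honest finite quotient still separating $g$, and only then pass to normal cores in $G$. Second, in Step 1 the claim that omnipotence ``applied to a single non-torsion element implies $H_v$ is residually finite'' only separates infinite-order elements. To separate a torsion element $t\ne 1$ of $H_v$, pick a loxodromic $h$ with $h$ and $th$ non-commensurable (available in a non-elementary hyperbolic group for any fixed $t$) and use omnipotence to arrange a finite quotient in which $h$ and $th$ have different orders, forcing $t$ to have nontrivial image. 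Neither defect is structural --- the paper's cited references handle exactly this bookkeeping --- but as written the last sentence of Step 3 and the residual-finiteness claim in Step 1 do not quite hold.
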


We omit the proof of Lemma \ref{CKARF} since it is almost the same as the proof of residual finiteness for graph manifold groups by Hempel \cite{Hem87}. The reader can also see \cite{Ngu26} for an improved result.

\subsection{Hyperbolic-$2$-decomposable groups}

We say a group $G$ is \emph{hyperbolic-$2$-decomposable} if $G$ splits as a graph of hyperbolic groups with $2$-ended edge groups. 

\begin{thm}
    Let $G$ be a residually finite hyperbolic-$2$-decomposable group. Then $G$ has property QT if and only if $G$ does not contain any distorted element. 
\end{thm}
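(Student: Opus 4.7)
The forward direction is essentially an orbit argument. Assume $G$ has property (QT), so there is a $G$--equivariant quasi-isometric embedding $\phi:G\hookrightarrow\prod_{i=1}^n T_i$ (with $\ell^1$ metric) realized as an orbit map. For any infinite-order $g\in G$, the action on each quasi-tree $T_i$ is either elliptic or loxodromic, because quasi-trees have no parabolic isometries. Hence the orbit map $n\mapsto g^no$ in $\prod T_i$ either stays bounded or grows linearly. A bounded orbit would contradict properness of $\phi$ (since $g$ generates an infinite subgroup and $G$ is finitely generated), so the growth is linear. The quasi-isometric embedding property then gives $|g^n|_G\asymp n$, showing that $g$ is undistorted.

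For the converse, my plan is to exhibit $G$ as a virtually colorable relative HHG satisfying the hypotheses of Theorem \ref{maintheorem}. The key structural input is that a hyperbolic--$2$--decomposable group with no distorted elements is relatively hyperbolic, with peripheral subgroups that are virtually abelian. These peripheral subgroups arise as stabilizers of the ``flats'' created when two commensurable edge subgroups meet at a common vertex group. Without the no-distortion hypothesis one could encounter Baumslag--Solitar subgroups $BS(1,n)$ ($n\ge 2$) inside a vertex-and-edge configuration, and such subgroups would obstruct any relatively hyperbolic or hierarchical structure; ruling them out is precisely what ``no distortion'' accomplishes. This relatively hyperbolic structure yields a natural relative HHG structure $(G,\s)$ of complexity $2$ in which the $\nest$--maximal domain $S$ is (coarsely) the coned-off Cayley graph of $G$, on which $G$ acts acylindrically in the sense of Osin, and the $\nest$--minimal domains are peripheral cosets whose associated space $\C V$ is coarsely the virtually abelian peripheral subgroup.

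The Type classification is then immediate. The maximal domain $S$ is of Type I by Corollary \ref{cor_maxdomain}. Each peripheral domain $V$ is of Type II because $G_V$ is virtually abelian and any finitely generated virtually abelian group has property (QT') via the diagonal embedding into a finite product of copies of $\reals$. Virtual colorability is arranged by passing to a finite-index subgroup along the lines of \cite[Proposition 3.2]{HP22}, using the finiteness of the set of $G$--orbits of peripheral cosets. Theorem \ref{maintheorem} then delivers property (QT) for $G$. The main obstacle in this plan is the first step of the converse: establishing the relatively hyperbolic structure with virtually abelian peripherals from only the no-distortion hypothesis. I expect this to follow from a combination theorem for graphs of hyperbolic groups with $2$--ended edge groups (in the spirit of Bestvina--Feighn or the relative versions due to Dahmani and Martínez-Pedroza), but carrying this verification out carefully is the technical heart of the argument.
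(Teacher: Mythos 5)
Your forward direction is fine and amounts to the same observation the paper cites from \cite[Lemma 2.5]{HNY21}: isometries of quasi-trees are elliptic or loxodromic, so undistortedness follows from properness of the embedding.

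The converse, however, rests on a structural claim that is false: a residually finite hyperbolic--$2$--decomposable group with no distorted elements need \emph{not} be relatively hyperbolic with virtually abelian peripherals. Take the vertex group $F_2=\langle a,b\rangle$ and attach $n\ge 2$ loops, each with edge group $\langle a\rangle$ and both edge monomorphisms the identity on $\langle a\rangle$, yielding
\[
G=\bigl\langle a,b,t_1,\dots,t_n \;\big|\; [a,t_i]=1,\ i=1,\dots,n\bigr\rangle \;\cong\; \bigl(\integers\times F_n\bigr)\ast \integers .
\]
This is a graph of hyperbolic groups with $2$--ended edge groups, it is residually finite (free product of residually finite groups), and it has no distorted elements. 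But the maximal parabolic one is forced to take in any relatively hyperbolic structure on $G$ is $\langle a,t_1,\dots,t_n\rangle\cong \integers\times F_n$, which is not virtually abelian for $n\ge 2$; there is no collection of virtually abelian subgroups relative to which $G$ is hyperbolic (for instance, $\integers\times F_n$ itself is a direct product of two infinite groups and so cannot be hyperbolic relative to any proper subgroups, ruling out coning off only the $\integers^2$'s). So your ``key structural input'' fails, and the rest of the converse --- the complexity-$2$ relative HHG, the Type II classification of minimal domains via $\reals$-factors --- fails with it.

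The paper sidesteps relative hyperbolicity entirely. It invokes the HHG structure on $G$ constructed in \cite{RS20} for graphs of hyperbolic groups with $2$--ended edge groups. In that structure there is no orthogonality (hence colorability is immediate), and every domain $U$ has $\C U$ either a quasi-tree (Type II) or the cone-off of a vertex group $G_v$ along its incident edge subgroups. In the latter case $\Stab_G(U)=G_v$ acts on $\C U$ acylindrically (Osin's relatively hyperbolic acylindricity), $\F_U$ is coarsely $G_v$ so the nested region is cobounded, and the restriction $\theta_U$ has trivial kernel so the neat-kernel condition is automatic; Corollary \ref{hhg_sep} then gives separable quasi-axes. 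No claim about the global peripheral structure of $G$ is needed. If you want to rescue a relative-hyperbolicity route, you would at least have to replace ``virtually abelian peripherals'' with ``peripherals having property (QT')'' and then prove that the peripherals --- which are themselves graphs of $2$--ended groups and can be as complicated as generalized Baumslag--Solitar groups --- have property (QT'); but the paper notes (citing \cite{But22}) that generalized Baumslag--Solitar groups with infinite monodromy do \emph{not} have property (QT), so this is not a step you can take for free.
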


\begin{proof}
    If $G$ has property QT, then $G$ does not contain any distorted element by \cite[Lemma 2.5]{HNY25}. 
    Now assume that $G$ does not contain any distorted element. Let $\s$ be the HHG structure of $G$ given by \cite{RS20}. By construction, there is no orthogonality in $\s$. Thus, $G$ is colorable. Let $U\in \s$. Then $\C U$ is either a quasi-tree or a hyperbolic space obtained by coning off a vertex group $G_v$ as a relatively hyperbolic group. In the former case, $U$ is of type II. Now we only need to consider the latter case. Similarly to Theorem \ref{admissibleQT}, we have $\stab_G(U)=G_v$ and $\F_U$ is coarsely $G_v$ itself. It is easy to see that acylindrical image, cobounded nested region and neat kernel hold true. Therefore, we conclude by Proposition \ref{hhg_sep} and Theorem \ref{maintheorem}. 
\end{proof}

Similarly to Lemma \ref{CKARF}, if $G$ is a hyperbolic-$2$-decomposable group without any distorted element such that every vertex group is omnipotent, then $G$ is residually finite (see \cite[\S 4]{Wis00}). 

\subsection{Artin groups and extensions of lattice Veech groups}
\label{ArtinVeech}

Let $G$ be either
\begin{itemize}
    \item an Artin group of large and hyperbolic type, or
    \item the $\pi_1(\Sigma)$-extension group of a lattice Veech group in the mapping class group $\mcg(\Sigma)$ of a closed surface $\Sigma$. 
\end{itemize}

As shown in \cite{HMS24} and \cite{DDLS24} respectively, $G$ is a virtually colorable HHG. Moreover, the associated hyperbolic spaces of $G$ are all quasi-trees except the maximal one. By Corollary \ref{cor_maxdomain} and Theorem \ref{maintheorem}, $G$ has property QT if $G$ is residually finite. Hence, we obtain that

\begin{thm}
    Every residually finite Artin group of large and hyperbolic type has property QT. 
\end{thm}

It is proved in \cite{Jan22} that any $3$-generator Artin groups with labels $\ge 4$ except for $(2m +1, 4, 4)$ for any $m\ge 2$ is residually finite. Therefore, any $3$-generator Artin group with labels $\ge 4$ except for $(2m +1, 4, 4)$ for any $m\ge 2$ has property QT. On the other hand, we ask

\begin{que}\label{que: VeechRF}
    When is the $\pi_1(\Sigma)$-extension group of a lattice Veech group residually finite?
\end{que}

\begin{rem*}\label{rem: VeechRF}
    The answer to Question \ref{que: VeechRF} is that the $\pi_1(\Sigma)$-extension group $G$ of a lattice Veech group is always residually finite. Indeed, since the $\pi_1(\Sigma)$-extension is along the Birman exact sequence (see \cite[Theorem 4.6]{FM12} for example), $G$ is a subgroup of the mapping class group $\mcg(\Sigma,p)$ of $\Sigma$ with a marked point $p$. Since $\mcg(\Sigma,p)$ is residually finite \cite{Gro74}, $G$ is also residually finite. This leads to the following theorem.
\end{rem*}

\begin{thmVeechQT}
\phantomsection\label{thm: VeechQT}
    Every $\pi_1(\Sigma)$-extension group of a lattice Veech group has property QT and the QT embedding is quasi-median. 
\end{thmVeechQT}

\subsection{Graph products}

\begin{defn}[Graph product]
Let $\Gamma$ be a finite simplicial graph with the vertex set $V(\Gamma)$ and the edge set $E(\Gamma)$. Each vertex $v \in V(\Gamma)$ is labeled by a group $G_{v}$. The \emph{graph product} $G_\Gamma$ is the group 
\[ G_{\Gamma} = \left.\left(\free_{v \in V(\Gamma)} G_{v}\right) \right/ \llangle [g_{v},g_{w}] \,\, \middle| \,\, g_{v} \in G_{v},\, g_{w} \in G_{w},\, \{v,w\} \in E(\Gamma) \rrangle. \]
We call the $G_v$ the \emph{vertex groups} of the graph product $G_\Gamma$. 
\end{defn}

\begin{thm}\label{thm_graphproduct}
    Let $G_{\Gamma}$ be a graph product of groups whose every vertex group has property QT$_0$. Then $G_{\Gamma}$ has property QT$_0$.
\end{thm}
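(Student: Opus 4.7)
The plan is to reduce the claim to Theorem \ref{TypeIIthm} by equipping $G_\Gamma$ with a colorable relative HHG structure in which every domain is of Type II. The candidate structure is the natural ``parabolic'' relative HHG structure on a graph product, whose index set consists of the $G_\Gamma$-translates of cosets $gG_\Lambda$ for induced subgraphs $\Lambda\subseteq\Gamma$, with nesting given by inclusion, orthogonality inherited from the join-structure of $\Gamma$, and transversality otherwise. For graph products of HHGs this is the structure of Berlyne--Russell; here we only require the vertex groups to be finitely generated, treating each vertex-group domain as a (possibly non-hyperbolic) minimal domain in the relative HHG sense.

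In this structure, the associated space $\C U$ of a domain $U = gG_\Lambda$ falls into two cases. If $\Lambda = \{v\}$ is a single vertex, then $\C U$ is $G_U$-equivariantly quasi-isometric to the Cayley graph of $G_v$, with $G_U$ identified with $G_v$ up to a bounded kernel; by hypothesis $G_v$ has property (QT'), so the $G_U$-action on $\C U$ inherits property (QT') by composition with the equivariant quasi-isometry $\C U\to G_v$. Otherwise $\C U$ is a quasi-tree: for the maximal domain this is the standard fact that the commutation/contact graph of a graph product is a quasi-tree, and for proper $\Lambda$ that decomposes as a join the same argument applies inside $G_\Lambda$. In the quasi-tree case, the $G_U$-action on $\C U$ has property (QT') trivially by taking $\C U$ itself as the single-factor product. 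Hence every domain is of Type II.

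Next I would verify colorability. A coloring is obtained by assigning each domain the isomorphism class of the induced subgraph $\Lambda$ defining its parabolic coset; since $\Gamma$ is finite there are only finitely many colors, and each color class is $G_\Gamma$-invariant. Two distinct cosets $gG_\Lambda$ and $g'G_\Lambda$ sharing the same $\Lambda$ cannot be orthogonal (orthogonality arises only between parabolics coming from disjoint join factors of $\Gamma$) and cannot be $\sqsubseteq$-comparable (since $G_\Lambda$ has infinite index in any strictly larger parabolic), so they are transverse. Applying Theorem \ref{TypeIIthm} then concludes that $G_\Gamma$ has property (QT').

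The main obstacle lies in pinning down the precise form of the relative HHG structure -- particularly the quasi-tree conclusion for the non-vertex-group domains and the transversality of parallel cosets -- when the vertex groups are only assumed to be finitely generated rather than HHGs. It is also important that the coloring hold on the nose rather than only virtually, because Theorem \ref{TypeIIthm} needs colorability (not merely virtual colorability) in order to conclude (QT') rather than just (QT). If either point fails in the naive formulation, a direct adaptation of the proof of Theorem \ref{maintheorem} -- producing a finite-index colorable refinement internal to the parabolic structure while tracking that the finite-index subgroup still admits a diagonal action on the resulting finite product of quasi-trees -- should still yield (QT') for $G_\Gamma$.
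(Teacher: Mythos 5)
Your proposal follows essentially the same route as the paper: invoke the Berlyne--Russell relative HHG structure on $G_\Gamma$, observe that minimal domains are (up to bounded error) the vertex groups (hence Type II by hypothesis) while all other domains have quasi-tree coned-off spaces (hence Type II trivially), verify colorability, and apply Theorem \ref{TypeIIthm}. One small slip: you propose coloring by the \emph{isomorphism class} of the defining subgraph $\Lambda$, but two parabolics over distinct isomorphic subgraphs $\Lambda_1 \cong \Lambda_2$ can be orthogonal (e.g.\ when $\Lambda_1, \Lambda_2$ are join factors), so that coloring would fail transversality; the correct (and what your own transversality argument actually justifies) choice is to color by the subgraph $\Lambda$ itself, equivalently by $G_\Gamma$-orbit, exactly as the paper does. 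With that correction your argument matches the paper's, including your observation that genuine (not merely virtual) colorability is what lets Theorem \ref{TypeIIthm} deliver (QT') rather than only (QT).
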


\begin{proof}
    Any graph product $G_\Gamma$ has a relative HHG structure $\s_\Gamma$ by \cite{BR22}. By definition of $\s_\Gamma$, any $G_\Gamma$-orbit on $\s_\Gamma$ corresponds to a unique subgraph of $\Gamma$ and is pairwise transversal. Thus, $G_\Gamma$ is colorable. By \cite[Theorem 4.4]{BR22}, for each domain $[g\Lambda]\in\s_\Gamma$, either $[g\Lambda]$ is $\nest$-minimal or $\C g\Lambda$ is a quasi-tree. Since each $\nest$-minimal domain corresponds to a vertex group, this means that every domain is of type II. By Theorem \ref{typeIIthm}, $G_{\Gamma}$ has property QT$_0$. 
\end{proof}

\printbibliography
\end{document}